\documentclass[a4paper,11pt,reqno]{amsart}
\usepackage[margin=2cm]{geometry}
\usepackage[main=english, russian]{babel}
\usepackage[utf8]{inputenc}
\usepackage{amssymb}
\usepackage{amsthm}
\usepackage{amsmath}
\usepackage{esvect}
\usepackage{hyperref}
\usepackage{cleveref}
\usepackage{thmtools}
\usepackage{thm-restate}
\usepackage{setspace}
\usepackage{verbatim}
\usepackage[dvips]{graphicx}
\usepackage{t1enc}
\usepackage{color}
\usepackage{pgf,tikz}
\usepackage{mathrsfs}
\usetikzlibrary{arrows, automata, positioning}

\usepackage{adjustbox,tikz,calc,graphics,babel,standalone}

\usepackage{enumerate, enumitem}

\usetikzlibrary{shapes.misc,calc,intersections,patterns,decorations.pathreplacing}
\usetikzlibrary{arrows,shapes,positioning}
\usetikzlibrary{decorations.markings}
\usetikzlibrary{graphs,graphs.standard,chains,fit}
\hypersetup{colorlinks=true,linkcolor=blue,citecolor=blue,pdfpagemode=UseNone,pdfstartview={XYZ null null 1.00}}

\tikzstyle arrowstyle=[scale=1]

\newcommand{\rainbow}[1]{\mathfrak{R}(#1)}
\newcommand{\sat}[3]{\operatorname{sat}_{#1}\left(#2, \rainbow{#3}\right)}

\theoremstyle{plain}
\newtheorem{theorem}{Theorem}

\newtheorem{claim}{Claim}
\newtheorem{conjecture}{Conjecture}
\newtheorem{lemma}[theorem]{Lemma}

\newtheorem{proposition}[theorem]{Proposition}

\newtheorem{corollary}[theorem]{Corollary}

\newtheorem{question}{Question}

\title{Rainbow saturation of graphs}

\author[A.~Gir\~{a}o]{Ant\'onio Gir\~{a}o}
\address[Ant\'onio Gir\~{a}o]{School of Mathematics, University of Birmingham, 
Edgbaston, Birmingham, B15 2TT, United Kingdom.
}
\email{giraoa@bham.ac.uk}

\author[D.~Lewis]{David Lewis}
\address[David Lewis]{\,Department of mathematical sciences, University Of Memphis, Memphis, TN 38152, USA.}
\email{david.lewis@gmail.com}

\author[K.~Popielarz]{Kamil Popielarz}
\address[Kamil Popielarz]{\,Department of mathematical sciences, University Of Memphis, Memphis, TN 38152, USA.}
\email{kamil.popielarz@gmail.com}

\date{\today}

\begin{document}
\maketitle
\onehalfspacing

\begin{abstract}
In this paper we study the following problem proposed by Barrus, Ferrara, Vandenbussche, and Wenger. 
Given a graph $H$ and an integer $t$, what is $\operatorname{sat}_{t}\left(n, \mathfrak{R}(H)\right)$, the minimum number of edges in a $t$-edge-coloured graph $G$ on $n$ vertices such that $G$ does not contain a rainbow copy of $H$, but adding to $G$ a new edge in any colour from $\{1,2,\ldots,t\}$ creates a rainbow copy of $H$?
Here, we completely characterize the growth rates of $\operatorname{sat}_{t}\left(n, \mathfrak{R}(H)\right)$ as a function of $n$, for any graph $H$ belonging to a large class of connected graphs and for any $t\geq e(H)$. 
This classification includes all connected graphs of minimum degree $2$. 
In particular, we prove that $\operatorname{sat}_{t}\left(n, \mathfrak{R}(K_r)\right)=\Theta(n\log n)$, for any $r\geq 3$ and $t\geq {r \choose 2}$, thus resolving a conjecture of Barrus, Ferrara, Vandenbussche, and Wenger. 
We also pose several new problems and conjectures.
\end{abstract}

\section{Introduction}
In extremal graph theory, over many decades, much attention has been paid to the following two types of question.
One is the classical Tur\'an-type problem \cite{turan} which asks for the maximum number of edges a graph on $n$ vertices can have provided it does not contain as a subgraph any member of a fixed class of graphs $\mathcal{H}$.
The other question is concerned with another extreme, namely to determine the minimum number of edges in a graph $G$ on $n$ vertices which is $\mathcal{H}$-free but for which the addition of any edge between two non-adjacent vertices of $G$ creates a copy of some graph $H\in\mathcal{H}$.
A maximal (with respect to inclusion) $\mathcal{H}$-free graph $G$ is said to be $\mathcal{H}$-\emph{saturated}. The latter question can then be reformulated: what is the smallest number of edges in a $\mathcal{H}$-saturated graph on $n$ vertices? This number, usually denoted by $\operatorname{sat}(n, \mathcal{H})$, was studied by Zykov \cite{zyk} and independently by Erd\H{o}s, Hajnal, and Moon \cite{ehm} who proved that $\operatorname{sat}(n, K_r)=(r-2)(n-1) -{r-2 \choose 2}$. Soon after, Bollob\'as \cite{Bollobas1} showed that $\operatorname{sat}(n, K_s^{\ell})={n \choose \ell}-{n-(s-\ell) \choose \ell}$, where $K_s^{\ell}$ is the complete $\ell$-uniform hypergraph on $s$ vertices and he conjectured $\operatorname{sat}(n, \mathcal{H})=O(n)$, for any class of graphs $\mathcal{H}$. K\'aszonyi and Tuza~\cite{kt}, in $1986$ confirmed this conjecture. For more information on saturation numbers we refer the reader to the survey of Faudree, Faudree, and Schmitt \cite{ffs}.

In the present paper, we will be interested in a variation of the saturation numbers, following the approach of Hanson and Toft~\cite{ht}, who extended this notion to edge-coloured graphs. We need introduce some definitions first. We define a $t$-edge-coloured graph to be an ordered pair $(G,c)$, where $G$ is a graph and $c$ is a $t$-edge-colouring of $G$, i.e., function from the edge set of $G$ to the set $\{1,2,3, \ldots,t\}$, whose elements we call colours. An edge-coloured subgraph of $G$ is a pair $(H,c|_{E(H)})$, where $H$ is any subgraph of $G$. Throughout the paper, we will usually identify the coloured graph $(G,c)$ with the graph $G$, especially when it is clear from the context which colouring is being used. Note that we do not require edge colourings to be proper. Given an integer $t$ and a family $\mathcal{F}$ of $t$-edge-coloured graphs, we say that a $t$-edge-coloured graph $(G,c)$ is $(\mathcal{F},t)$-saturated if $(G,c)$ contains no member of $\mathcal{F}$ as an edge-coloured subgraph, but the addition of any non-edge in any colour from the set $\{1,2,\ldots,t\}$ creates a copy of a coloured graph in $\mathcal{F}$. Similarly to the usual saturation problem, one denotes by $\operatorname{sat}_t(n,\mathcal{F})$ the minimum number of edges in a $(\mathcal{F},t)$-saturated $t$-edge-coloured graph on $n$ vertices. In \cite{ht}, Hanson and Toft proved that for any sequence of positive integers $2 \leq k_1\leq k_2\leq \ldots \leq k_m$,
\begin{align*}
\operatorname{sat}_t(n, \mathcal{M}(K_{k_1},K_{k_2}\ldots, K_{k_m}))=
\begin{cases}
{n \choose 2} &\text{ if } n\leq k-2m \\
{k-2m \choose 2}+(k-2m)(n-k+2m) &\text{ if } n>k-2m,
\end{cases}
\end{align*}
where $k=\sum\limits_{i=1}^{t} k_i$ and $\mathcal{M}(K_{k_1},K_{k_2}\ldots, K_{k_m})$ is the collection of coloured graphs consisting of a monochromatic copy of $K_{k_i}$ in colour $i$, for each $i\in \{1,2,\ldots,m\}$.

In this paper, we investigate some problems proposed by Barrus, Ferrara, Vandenbussche, and Wenger \cite{sat}. Given a graph $H$ and $t\geq e(H)$, we let $\mathfrak{R}(H)$ to be the collection of all rainbow copies of $H$, i.e. all $t$-edge-coloured graphs $(H,c)$ where each edge is assigned a different colour from $\{1,2\ldots,t\}$.
We shall call $\sat{t}{n}{H}$ the \emph{$t$-rainbow saturation number of $H$}, and, if the set of colours is infinite (say the set of natural numbers) we shall simply write $\sat{}{n}{H}$ and call it the \emph{rainbow saturation number of $H$}. 
Our goal throughout the paper is to determine the value of $\sat{t}{n}{H}$ for a fixed graph $H$.

The authors of \cite{sat} proved several beautiful and surprising results concerning these numbers. 
In particular, they showed a rather interesting phenomenon, namely that there are graphs whose $t$-rainbow saturation numbers grow considerably faster as a function of $n$ then the usual saturation numbers. 
For example, they proved that for every integer $r$ and $t \geq {r \choose 2}$ there exist two positive constants $c_1,c_2$ such that
\[
    c_1 \frac{n\log{n}}{\log{\log{n}}} \leq \sat{t}{n}{K_r}\leq c_2n\log{n}.
\]
In the same paper, the authors determined the $t$-rainbow saturation number of stars, showing that $\sat{t}{n}{K_{1,k}}=\Theta(n^2)$ for any positive integers $t\geq k\geq 2$. This result confirms that the growth rates of rainbow saturation numbers behave very differently from the usual saturation numbers. 
They also state the following conjecture.
\begin{conjecture}[\cite{sat}] \label{conjecture:complete}
    For any integers $r$ and $t$ with $t\geq {r \choose 2}$, $\sat{t}{n}{K_{r}}=\Theta(n\log{n})$.
\end{conjecture}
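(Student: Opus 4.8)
The upper bound $\sat{t}{n}{K_r}=O(n\log n)$ is already established by Barrus, Ferrara, Vandenbussche, and Wenger, so the entire task is to prove the matching lower bound $\sat{t}{n}{K_r}=\Omega(n\log n)$, where throughout $t$ is fixed and $n\to\infty$ (the implied constants may depend on $r$ and $t$). The plan is to show that every rainbow-$K_r$-saturated graph has average degree $\Omega(\log n)$. Suppose for contradiction that $G$ is $(\rainbow{K_r},t)$-saturated with $e(G)\le \varepsilon n\log n$ for a small constant $\varepsilon$. Since the degrees sum to at most $2\varepsilon n\log n$, at least half of the vertices have degree at most $d_0:=4\varepsilon\log n$; let $B$ be this set, so $|B|\ge n/2$, and since $e(G)=o(n^2)$ almost every pair inside $B$ is a non-edge.

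The engine is the following reading of saturation. For a non-edge $uv$ and a colour $\alpha$, inserting $uv$ in colour $\alpha$ must create a rainbow $K_r$; as $uv$ is the only new edge, this $K_r$ uses $uv$ together with $r-2$ common neighbours $w_1,\dots,w_{r-2}$ of $u$ and $v$, and the $\binom{r}{2}-1$ old edges form a rainbow copy of $K_r-uv$ all of whose colours differ from $\alpha$. Writing $G_\alpha$ for $G$ with its colour-$\alpha$ edges deleted, this says that every non-edge of $G$ lies in a rainbow copy of $K_r-e$ inside $G_\alpha$, and this must hold simultaneously for all $t$ colours. The crucial point, and the source of the logarithm, is that each vertex $u$ has only $t$ colour classes $N_1(u),\dots,N_t(u)$ in its neighbourhood, so a low-degree vertex presents only a very restricted ``colour interface'' to the rest of the graph; the difficulty is to force every such interface to be rich.

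First I would isolate the seed obstruction. If two non-adjacent vertices $u,v$ satisfied $c(uw)=c(vw)$ for every common neighbour $w$, then no common neighbour could contribute two distinct colours, so for $r=3$ no colour could be inserted to make a rainbow triangle, contradicting saturation; for general $r$ one gets the same conclusion, since the $r-2$ edges from $u$ and the $r-2$ edges from $v$ into any witness can then never be simultaneously rainbow. Building on this, I would attach to each vertex $u\in B$ a bounded ``colour-type'' recording, for its $\le d_0$ neighbours, the colours $c(uw)$ together with enough adjacency information to decide witnessing; there are at most $C^{d_0}$ such types for a constant $C=C(r,t)$. The heart of the argument is then a counting lemma asserting that a family of pairwise non-adjacent, pairwise rainbow-saturated vertices cannot contain too many vertices of a single type: two vertices whose colour interfaces to their shared neighbourhoods are too similar cannot both avoid every colour on every required witness. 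Granting such a lemma, the $|B|\ge n/2$ vertices realise $\Omega(n)$ essentially distinct types, whence $C^{d_0}=\Omega(n)$, i.e.\ $d_0=\Omega(\log n)$; this contradicts $d_0=4\varepsilon\log n$ once $\varepsilon$ is small, yielding $e(G)=\Omega(n\log n)$.

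The hard part will be the counting lemma, i.e.\ quantifying exactly how many pairwise non-adjacent, mutually saturated vertices can share a restricted colour interface. This is precisely where the $\log\log n$ lost by the earlier bound must be recovered: a naive pigeonhole over the $t$ colours and the witness vertices only shows that common neighbourhoods cannot be monochromatic, which is too weak, whereas the full strength ``avoid every colour on a rainbow $K_r-e$'' must be converted into a clean exponential-in-degree bound on the number of admissible types. Two further points need care. The hypothesis that $G$ is itself rainbow-$K_r$-free should be used to prevent witnesses from reusing colours ``for free'' and thereby inflating the number of viable types. And for general $r\ge 3$ one must control not merely the colours $c(uw_i)$ but the entire rainbow $K_r-uv$ on $\{u,v,w_1,\dots,w_{r-2}\}$, which forces the common neighbourhood of each non-edge to carry $r-2$ colour-diverse vertices spanning a rainbow near-clique. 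Once the counting lemma is in place, combining $\Omega(n\log n)$ with the known $O(n\log n)$ upper bound gives $\sat{t}{n}{K_r}=\Theta(n\log n)$ for every $r\ge 3$ and $t\ge\binom{r}{2}$, resolving the conjecture.
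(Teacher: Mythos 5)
There is a genuine gap, and it sits exactly where you place it: the ``counting lemma'' is the entire content of the lower bound, and you do not prove it. Worse, the notion of type you propose cannot carry the argument as stated. A type with at most $C^{d_0}$ values records only the multiset of colours on the $\le d_0$ edges at $u$ (plus a bounded amount of auxiliary data); it cannot record the \emph{identities} of the neighbours of $u$, of which there are $\binom{n}{d_0}$ possibilities. But the saturation condition for a non-edge $uv$ is a statement about the \emph{common} neighbourhood of $u$ and $v$: two vertices of identical type may have disjoint neighbourhoods, and then the pair $(u,v)$ violates saturation for reasons invisible to the type. So ``many vertices share a type'' does not by itself produce two vertices whose colour interfaces to a \emph{shared} witness coincide, and no contradiction follows. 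The coarse pigeonhole you describe is essentially the argument of Barrus et al., and it is known to lose a $\log\log n$ factor, i.e.\ it gives only $\Omega(n\log n/\log\log n)$; you correctly identify that the full strength of ``every colour must be avoidable on some rainbow witness'' has to be converted into an exponential-in-degree bound, but you give no mechanism for doing so, and that conversion is precisely the open difficulty your plan leaves unresolved.

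The paper closes this gap by a different, probabilistic route (Theorem~\ref{thm:lower_bound_triangles}, which applies since every edge of $K_r$ lies in a triangle). Fix a colour, say $1$. Since inserting any non-edge in colour $1$ must create a rainbow triangle, every non-edge $uv$ has a common neighbour $w$ with $c(uw)\neq c(vw)$ and both colours different from $1$; equivalently, the non-edges are covered by the complete bipartite graphs $B_v^{i,j}$ between the colour classes $S_v^i$ and $S_v^j$ at each vertex $v$, over pairs $i<j$ of colours other than $1$. One then deletes, independently and uniformly at random, one side of each $B_v^{i,j}$. The surviving set $U$ meets no non-edge in both endpoints, hence is a clique, so $|U|\le \sqrt{2m}+1$; on the other hand $\mathbb{E}\bigl[|U|\bigr]\ge n\cdot 2^{-2(t-2)(m-m_1)/n}$, and comparing the two bounds (after averaging over the choice of the distinguished colour) forces $m\ge\left(\frac{1}{4t}+o(1)\right)n\log n$. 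This sidesteps type-counting entirely: the ``exponential in degree'' appears as the survival probability of a vertex, and the global structure that replaces your counting lemma is simply the fact that a clique in a graph with $m$ edges has $O(\sqrt{m})$ vertices. If you want to salvage your approach, you would need to prove a statement of the form ``at most $f(r,t)$ pairwise non-adjacent low-degree vertices can pairwise share colour-compatible interfaces,'' and it is not clear such a statement is true without the randomised global step.
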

One of our aims in this paper is to prove this lovely conjecture.
Moreover, we show that any graph $H$ without isolated vertices satisfying $\sat{t}{n}{H}=\Theta(n^2)$, for some $t\geq e(H)$, must be a star.
This answers a question posed in \cite{sat} asking if stars were the only graphs with quadratic $t$-rainbow saturation numbers.
Observe that the function $\sat{t}{n}{H}$ is monotonically decreasing in $t$ for every graph $H$. Therefore, one just needs to show $\sat{t}{n}{H}=o(n^2)$ when $t=e(H)$. Indeed, we show the following stronger result.

\begin{theorem}\label{thm:generalupperbound}
    Let $H$ be a graph without isolated vertices which is not a star.
    Then, for any $t\geq e(H)$, 
    \[
        \sat{t}{n}{H}=O(n\log n).
    \]
\end{theorem}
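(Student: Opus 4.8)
The plan is to prove the bound by an explicit construction. By the monotonicity of $\sat{t}{n}{H}$ in $t$ observed just above the statement, it suffices to treat the hardest case $t = e(H)$; write $m = e(H)$ from now on. So I aim to build, for each large $n$, an $m$-edge-coloured graph $G$ on $n$ vertices with $O(n\log n)$ edges that is $(\rainbow{H},m)$-saturated. Unwinding the definition, the saturation condition I must arrange is strong: for \emph{every} non-edge $uw$ and \emph{every} colour $c\in[m]$, adding $uw$ in colour $c$ must produce a rainbow copy of $H$, and since no such copy existed before, that copy must use the edge $uw$. Equivalently, deleting from $H$ the edge that $uw$ plays the role of, I need a rainbow embedding of the remaining graph $H^-:=H-e^*$ anchored at $u,w$ whose $m-1$ edges carry exactly the palette $[m]\setminus\{c\}$; at $t=m$ there is no colour slack, so this palette is forced.

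First I would isolate the structural feature of a non-star $H$ that the template edge $e^*$ comes from. Because $H$ has no isolated vertices and is not a star, its edges do not all pass through a single vertex, so a short argument shows $H$ must contain either two independent edges ($2K_2$) or a triangle ($K_3$): indeed, if no two edges are disjoint then the edges form an intersecting family, which (being not a star) can only be a triangle. In either case one can fix a suitable edge $e^*=xy$ of $H$ so that $H^-=H-e^*$ admits a rigid enough embedding pattern to serve as a completion template, with $x,y$ playing the roles of the endpoints $u,w$ of a freshly added edge.

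The construction itself is a constant-size \emph{core gadget} $B$, carrying a fixed rainbow-$H$-free colouring that already realises most of $H^-$, together with a sparse, colour-coded attachment of the remaining $\approx n$ vertices. Concretely I would assign each vertex a distinct binary code of length $\lceil \log_2 n\rceil$ and join it to the vertices of $B$ dictated by its bits, choosing colours so that two opposing requirements hold simultaneously: (i) no two attachments ever combine with $B$ into a rainbow copy of $H$, because distinct codes force a repeated colour in any prospective copy, guaranteeing rainbow-freeness of $G$; and (ii) for any pair $u,w$, a coordinate in which their codes differ supplies enough freedom to route a rainbow $H^-$ between them realising, for each $c$, exactly the palette $[m]\setminus\{c\}$, which is what the saturation condition demands. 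Summing the contributions gives $|E(B)| + n\cdot O(\log n) = O(n\log n)$ edges, the claimed bound.

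The main obstacle is precisely the tension in the previous paragraph. With only $t=m$ colours available the palette of every completing copy is pinned down, so the saturation condition asks the gadget-plus-code structure to realise all $m$ palettes $[m]\setminus\{c\}$ between \emph{every} pair of vertices, while condition (i) must still forbid every rainbow copy of $H$ from appearing without a new edge. Reconciling enough colour-flexibility to complete a rainbow copy in every colour with enough rigidity to block all pre-existing copies is where the logarithmic coding is genuinely needed, and where the careful choice of $e^*$, of the core gadget $B$, and of the colour assignment on the attachment edges carries the weight of the argument.
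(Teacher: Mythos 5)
Your overall plan---reduce to $t=e(H)$, then build a small core plus a logarithmically colour-coded attachment of the remaining $\approx n$ vertices---has the same silhouette as the paper's construction, but the proposal defers exactly the parts that carry the proof, and two of its concrete claims do not stand as written. First, a \emph{constant-size} gadget $B$ cannot support $n$ vertices with distinct attachment codes of length $\lceil\log_2 n\rceil$ each contributing $O(\log n)$ edges: a constant-size $B$ admits only $O(1)$ attachment patterns and $O(1)$ edges per outer vertex. In the paper the core has size $\Theta(\log n)$---it is a disjoint union of $e(H)\cdot\lceil\log(n^{2}e(H)+1)\rceil$ copies of $H-\{x,y\}$---and each outer vertex chooses, independently for each copy, whether to attach as $x$ or as $y$; that length-$\Theta(\log n)$ binary string is the ``code,'' and it is chosen at random so that (with positive probability) every pair of outer vertices disagrees in some copy of every colour class, which is what yields saturation in every colour. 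Second, and more seriously, you attribute rainbow-freeness to the codes (``distinct codes force a repeated colour in any prospective copy''), but that is neither justified nor how it actually works: in the paper rainbow-freeness holds deterministically for \emph{every} outcome of the coding, and is proved by an injective counting argument (Lemma~\ref{lemma:each_vertex_appears} and Propositions~\ref{prop:edge_not_in_triangle}, \ref{prop:cut_edge} and~\ref{prop:cycles_no_rainbow}) that maps the triangles (resp.\ longest cycles through $e^{*}$, resp.\ non-pendant bridges) of a hypothetical rainbow copy into those of $H$ avoiding the template edge $e^{*}$. This is the bulk of the work, it dictates how $e^{*}$ must be chosen (in a triangle; or in a cycle but in no triangle; or as a carefully maximal non-pendant bridge), and your dichotomy ``$H\supseteq 2K_2$ or $H\supseteq K_3$'' does not by itself produce a usable $e^{*}$ nor a verifiable freeness argument. (The paper's actual split is triangle versus no triangle: in the latter case no coding is needed at all and one gets $O(n)$ via Theorem~\ref{thm:trees_2_conn}.)

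Separately, the theorem covers all graphs without isolated vertices, not only connected ones, and your construction would have to exclude rainbow copies of a disconnected $H$ assembled from pieces scattered across the core, the outer set, and several attachment neighbourhoods simultaneously. The paper avoids this by a reduction (Proposition~\ref{prop:disconnected}) to a maximal non-star component, together with a separate $O(n)$ construction when every component is a star; your proposal does not address the disconnected case at all.
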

Observe trivially that the addition of isolated vertices does not change the rainbow saturation numbers for all $n$ sufficiently large. 

Given a graph $H$, we say that a vertex $x\in V(H)$ is \emph{conical} if its degree is $|H|-1$ and we say an edge is \emph{pendant} if one of its endpoints has degree $1$.
For any $r\geq 4$, we define \emph{$K_{r}$ with a rotated edge} to be the graph obtained by taking with a copy of $K_{r}$, adding a new vertex, and "rotating" one edge by replacing one of its endpoints with the new vertex, as in Figure~\ref{fig2}.\\

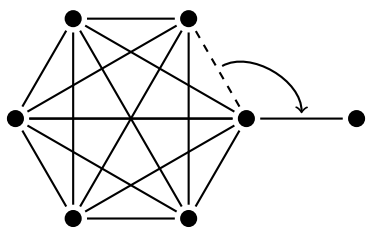
\begin{figure}[htbp]
\centering
\begin{tikzpicture}
[
  every node/.style={draw,circle,inner sep=2pt},
  fsnode/.style={fill=black},
  ssnode/.style={fill=black},
  every fit/.style={ellipse,draw,inner sep=-2pt,text width=2mm},
  shorten >= 2pt,shorten <= 2pt, auto, thick]

\foreach \a in {1,2,...,6}{
\node [fsnode, anchor=center] at ({360/6 * (\a-1)}:1.5 cm+ 1)(f\a) {};
}

\begin{scope}[xshift=30mm,yshift=0cm,start chain=going right,node distance=10mm]
  \node[ssnode,on chain] (s1) {};
\end{scope}

\draw[black](f1) -- (f2) [dashed];
\coordinate (a) at ($(f1)!0.5!(f2)$);
\draw[black](f1) -- (s1);
\coordinate (b) at ($(f1)!0.5!(s1)$);
\draw[->, bend right=90] (a) to [out=60, in=120] (b);
\draw[black](f1) -- (f3);
\draw[black](f1) -- (f4);
\draw[black](f1) -- (f4);
\draw[black](f1) -- (f5);
\draw[black](f1) -- (f6);
\draw[black](f2) -- (f3);
\draw[black](f2) -- (f4);
\draw[black](f2) -- (f5);
\draw[black](f2) -- (f6);
\draw[black](f3) -- (f4);
\draw[black](f3) -- (f5);
\draw[black](f3) -- (f6);
\draw[black](f4) -- (f5);
\draw[black](f4) -- (f6);
\draw[black](f5) -- (f6);
\end{tikzpicture}

\caption{$K_6$ with a rotated edge. The dashed line represents the removed edge.}
\label{fig2}
\end{figure}

In the next result, we completely characterize the growth rates of $t$-rainbow saturation numbers of every connected graph $H$ with no leaves, for every $t\geq e(H)$. 
Actually, we prove a slightly stronger result.

\begin{theorem} \label{thm:class}
Let $H$ be a connected graph of order at least $3$. Then, for every $t\geq e(H)$, $\sat{t}{n}{H}$ equals:
\begin{enumerate}
\item \label{thm:class:star} $\Theta(n^2)$, if $H$ is a star.
\item \label{thm:class:conical} $\Theta(n\log n)$, if $H$ has a conical vertex but is not a star.
\item \label{thm:class:triangular} $\Theta(n\log n)$, if every edge of $H$ is in a triangle.
\item \label{thm:class:linear} $\Theta(n)$, if $H$ contains a non-pendant edge which does not belong to a triangle.
\item \label{thm:class:Hkl} $\Theta(n)$, if $H$ is a $K_{r}$ with a rotated edge, for some even $r \ge 4$.
\end{enumerate}
\end{theorem}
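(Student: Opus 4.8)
The plan is to treat the five regimes through two genuinely new ingredients---a lower bound of order $n\log n$ for cases \ref{thm:class:conical} and \ref{thm:class:triangular}, and linear constructions for cases \ref{thm:class:linear} and \ref{thm:class:Hkl}---while everything else reduces to results already available. Since $\sat{t}{n}{H}$ is non-increasing in $t$, it suffices to prove each upper bound at $t=e(H)$ and each lower bound for all (arbitrarily large) $t$. The star case \ref{thm:class:star} is exactly the theorem of Barrus, Ferrara, Vandenbussche and Wenger that $\sat{t}{n}{K_{1,k}}=\Theta(n^2)$, so nothing new is needed there. For the matching \emph{upper} bounds in \ref{thm:class:conical} and \ref{thm:class:triangular} I would simply invoke Theorem~\ref{thm:generalupperbound}, which already gives $\sat{t}{n}{H}=O(n\log n)$ for every non-star $H$. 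Finally, a universal linear \emph{lower} bound of $\Omega(n)$ holds in all cases and in particular settles the easy direction of \ref{thm:class:linear} and \ref{thm:class:Hkl}: a $(\rainbow{H},t)$-saturated graph can contain at most one isolated vertex, since adding an edge between two isolated vertices produces a one-edge component that cannot host a connected $H$ of order at least $3$; hence at least $n-1$ vertices have degree $\ge 1$ and so $e(G)\ge (n-1)/2$.

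The heart of the proof, and the step I expect to be the main obstacle, is the lower bound $\sat{t}{n}{H}=\Omega(n\log n)$ for \ref{thm:class:conical} and \ref{thm:class:triangular}; I would aim for a single lemma covering both. The common feature is that every added edge must be completable to a rainbow $H$ through a \emph{local} witness: if every edge of $H$ lies in a triangle, then for non-adjacent $u,v$ and each colour $c$ there is a common neighbour $x$ with $c(ux)\ne c(vx)$, both different from $c$; if instead $H$ has a conical vertex, the image of that vertex plays the same role as a common neighbour joined to both $u$ and $v$ by distinctly coloured edges. The difficulty is that this local requirement, taken one pair and one colour at a time, only forces good expansion and hence a \emph{linear} bound; the extra logarithmic factor must come from a global argument that simultaneously accounts for \emph{all} $t$ colours across \emph{all} non-adjacent pairs. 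The intuition is a separating-system phenomenon: were all common neighbours concentrated on a bounded set of hubs, then for each colour $c$ one would need a hub realising the complementary coloured pattern at $u$ and $v$, and an elementary argument shows that distinguishing $n$ vertices in this way across the $t$ colours requires $\Omega(\log n)$ hubs, each contributing to the degree of every vertex. To turn this into a proof for arbitrary (not hub-based) saturated graphs I would assume $e(G)\le \varepsilon n\log n$, pass to a linear-sized set of vertices of degree $o(\log n)$, and run a Bollob\'as set-pairs / entropy double count on their coloured neighbourhoods, deriving a contradiction. Making this robust to the presence of a few high-degree vertices, and to having infinitely many colours, is the crux.

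For the linear upper bound in \ref{thm:class:linear} I would exploit the given non-pendant edge $ab$ of $H$ that lies in no triangle; thus $a$ and $b$ each have a further neighbour and $N_H(a)\cap N_H(b)=\varnothing$. The construction is a bounded-size coloured gadget $F$ together with constant-degree attachments: each of the $n$ vertices is joined to $F$ by $O(1)$ edges so that, from its neighbourhood, one can read off a rainbow copy of the ``$a$-side'' of $H$ and, independently, of the ``$b$-side''. Any added edge $uv$ is then taken to be the copy of $ab$, with $u\mapsto a$ and $v\mapsto b$; because $a$ and $b$ share no neighbour, the two sides can be supplied locally and independently, with no triangle linking them, which is precisely what keeps the total number of edges at $O(n)$. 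The colourings of $F$ and of the attachments are chosen so that no rainbow $H$ exists beforehand, while the new edge always completes one in every colour. The non-pendant hypothesis---that both sides of $ab$ are non-trivial---is what makes a symmetric, bounded gadget of this kind possible; it is exactly the feature that disappears for a pendant or triangle-bound edge.

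Case \ref{thm:class:Hkl} is where parity enters, and I would build the construction on a $1$-factorization of $K_r$, which exists precisely when $r$ is even. Here $H$ is $K_r$ with one endpoint of an edge rotated onto a new leaf $z$, so $H$ consists of a near-complete part together with a single flexible (rotated) pendant edge. The backbone of the construction is a bounded clique carrying a proper $(r-1)$-edge-colouring coming from the $1$-factorization, which lets one embed the dense part of $H$ in rainbow fashion while controlling exactly which colours are forced on the rotated edge; the leaf then provides the freedom needed for an added edge to serve as that edge in any colour. With only $O(n)$ attaching edges this yields a $(\rainbow{H},t)$-saturated graph of linear size. The role of the parity is essential: for odd $r$ the $1$-factorization, and with it the clean control over the rainbow patterns on the clique, is unavailable, consistent with the theorem offering no linear bound in that case.
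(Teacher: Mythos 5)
Your proposal correctly identifies the architecture of the proof --- which pieces are quoted, which are new --- and the high-level intuitions are sound, but all three genuinely new ingredients are left as plans rather than arguments, and in each case the missing step is exactly the hard one.

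The largest gap is the $\Omega(n\log n)$ lower bound for cases \ref{thm:class:conical} and \ref{thm:class:triangular}, which you yourself flag as ``the crux''. Your separating-system intuition and the suggestion to pass to low-degree vertices and run a Bollob\'as set-pairs or entropy count is not carried out, and it is not clear it closes: the difficulty is precisely that the witnessing common neighbours need not be concentrated on hubs. The paper's argument is different and short: fix a colour, say $1$; for each vertex $v$ and each pair $i<j$ of other colours form the complete bipartite graph between $S_v^i$ and $S_v^j$; saturation in colour $1$ forces every non-edge to be covered by one of these bipartite graphs; now independently and uniformly delete one side of each, and observe that the surviving set $U$ must be a clique, so $|U|\le\sqrt{2m}+1$, while $\mathbb{E}[|U|]\ge n\cdot 2^{-O(t^2 m/n)}$ by convexity. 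Comparing the two bounds gives $m=\Omega(n\log n/t)$ directly, with no need to handle high-degree vertices separately. For the conical case there is an additional reduction you only gesture at: one first extracts a set $S$ of $\ge(n-1)/t$ vertices all seeing a common colour $1$, and must argue that adding a colour-$1$ edge inside $S$ cannot be completed with an endpoint playing the conical vertex (since one could swap the new leaf for an existing colour-$1$ neighbour and find $H$ already present); only then does every non-edge of $S$ acquire a rainbow cherry in the other colours.

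The second gap is the freeness verification in the linear constructions. For \ref{thm:class:linear} you assert that ``the colourings of $F$ and of the attachments are chosen so that no rainbow $H$ exists beforehand,'' but this is the technical heart of the paper's Subsection on trees and $2$-connected pieces: one takes $e(H)$ disjoint copies of $H\setminus\{x,y\}$, attaches the independent set to the images of $N(x)\cup N(y)$, and then must rule out a rainbow $H$ by a delicate counting argument --- either counting longest cycles through $e$ (when $e$ lies in a cycle but no triangle) or counting non-pendant bridges (when every non-triangle edge is a bridge), in both cases after establishing structural facts about any putative rainbow copy (exactly one vertex in $L$, exactly one repeated index, etc.). Without this case split and counting, the claim is unsupported. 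Similarly for \ref{thm:class:Hkl}: you correctly isolate the $1$-factorization of $K_r$ for even $r$, but the paper's construction is not a bounded gadget with $O(n)$ attaching edges; it is a disjoint union of copies of a Hamming-type graph on $[r]^{r/2}$ carrying a \emph{proper} $\binom{r}{2}$-edge-colouring in which every vertex sees every colour exactly once and every maximal clique is rainbow. Freeness then follows because the ``missing'' edge of any rainbow copy of $H$ would have to repeat the colour of the pendant edge, and saturation follows because every vertex already sits on a rainbow $K_r$ in every colour. Your gadget-plus-attachments picture would need each outside vertex to reach, for each of the $\binom{r}{2}$ colours, a suitably coloured rainbow $K_{r-1}$ while never creating a rainbow $H$, and no mechanism for that is given.
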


We note that if $H$ is connected with no pendant edges, then, for any $t\geq e(H)$, $\sat{t}{n}{H}=\Theta(n\log n)$ if every edge belongs to a triangle (by \ref{thm:class:triangular}) and $\sat{t}{n}{H}=\Theta(n)$ otherwise (by \ref{thm:class:linear}).

We confirm Conjecture~\ref{conjecture:complete} as a direct consequence of Theorem~\ref{thm:class}:
\begin{theorem}
    For any integers $r$ and $t$ with $t\geq {r \choose 2}$, $\sat{t}{n}{K_{r}}=\Theta_{r,t}(n\log{n})$.
\end{theorem}
We would like to note that Conjecture~\ref{conjecture:complete} was independently proved by Kor\'andi~\cite{Korandisat} and by Ferrara, Johnston, Loeb, Pfender, Schulte, Smith, Sullivan, Tait, and Tompkins~\cite{ferrara2017edgecolored}.

It is easy to check that all graphs excluded from the classification of Theorem~\ref{thm:class} can be constructed by starting with a connected graph in which every edge lies in a triangle and adding pendant edges to the graph.
Note that not all graphs constructed in this way are excluded, as the class of such graphs includes all cliques with a rotated edge and some graphs with a conical vertex.
For simplicity, we denote by $\mathcal{B}$ the class of all connected graphs excluded from the classification of Theorem~\ref{thm:class}.

Although we have not determined the correct order of magnitude of the $t$-rainbow saturation numbers of any graph $H$ in $\mathcal{B}$ for all $t\geq e(H)$, in almost all cases, we were able to determine the order of magnitude of $\sat{t}{n}{H}$ for all sufficiently large values of $t$.
The authors of \cite{sat} also showed that if $H$ is a graph on at least five vertices with a leaf whose neighbour is not a conical vertex and the rest of the vertices do not induce a clique then for any $t \ge \binom{|H|-1}{2}$ we have $\sat{t}{n}{H} = \Theta(n)$.
Our next result covers almost all the remaining graphs containing a pendant edge. 
We show that for every $H$ in $\mathcal{B}$ (with the exception of $K_r$ with a rotated edge, $r$ odd), the $t$-rainbow saturation number of $H$ is linear in $n$, for all $t$ sufficiently large.

\begin{theorem}\label{thm:largenumbercolours}
Let $H$ be a connected graph with no conical vertex and containing at least one pendant edge.
Moreover, suppose $H$ is not a copy of $K_r$ with a rotated edge for  odd $r\geq 5$.
Then, for every $t\geq |H|^{2}$,
\[
    \sat{t}{n}{H}=\Theta(n).
\]
\end{theorem}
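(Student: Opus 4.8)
The plan is to prove the two bounds separately; the lower bound is immediate and the entire content lies in the construction giving the upper bound.

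For the lower bound $\sat{t}{n}{H}=\Omega(n)$, I would argue that any $(\rainbow{H},t)$-saturated coloured graph $(G,c)$ has at most one isolated vertex. Indeed, if $u,v$ were two isolated vertices, then adding the edge $uv$ in some colour would have to create a rainbow copy of $H$; this copy must use $uv$, and since $u,v$ have no other incident edges both would be leaves of the copy adjacent to one another, forcing $H=K_2$, which is impossible since $|H|\ge 3$. Hence at least $n-1$ vertices have degree at least $1$, so $e(G)\ge (n-1)/2$.

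For the upper bound I would first reduce to the essential case. If $H$ has a non-pendant edge lying in no triangle, then $\sat{t}{n}{H}=\Theta(n)$ already by part~\ref{thm:class:linear} of Theorem~\ref{thm:class} (which holds for all $t\ge e(H)$, in particular for $t\ge |H|^2$); so I may assume that every non-pendant edge of $H$ lies in a triangle, which together with the hypotheses places $H$ in the class $\mathcal B$. Fix a leaf $y$ of $H$, let $x$ be its neighbour and set $H'=H-y$; since $x$ is not conical we have $\deg_H(x)\le |H|-2$. The construction I would build consists of a coloured core $W$ of constant size (depending only on $H$) together with $n-|W|$ ``port'' vertices, each joined to $W$ by a bounded number of edges, so that $e(G)=O(n)$. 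The colouring of $W$ and of the port edges must be chosen so that: (a) $G$ contains rainbow copies of $H'$ rooted, in the role of $x$, at every port $p$, with enough palette freedom that for each colour $c$ some such copy avoids $c$; while (b) $G$ itself contains no rainbow $H$. Given (a)--(b), a non-edge $pq$ added in a colour $c$ is completed by using $pq$ as the pendant edge $xy$: taking, say, $p$ in the role of $x$, one locates a rainbow copy of $H'$ at $p$ avoiding $c$ with $q$ as the leaf, producing a rainbow $H$. The abundance of colours $t\ge |H|^2$ is precisely what supplies this palette freedom while keeping port degrees bounded.

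The crux, and the step I expect to be the main obstacle, is reconciling (a) and (b): the core must be rich enough that every port roots many differently-coloured rainbow copies of $H'$, yet constrained enough that no genuine rainbow $H$ is already present, since no existing edge may extend such a copy into a pendant. A naive ``fresh-colour'' attachment is too rich and creates a rainbow $H$ immediately, so colours must be reused to force the necessary clashes, and this is exactly where the balanced colouring of $W$ and the constant $|H|^2$ enter. For the finitely many colours $c$ that clash with a port's own incident colours the pendant route is blocked, and here I would exploit the structural property that every non-pendant edge of $H$ lies in a triangle: the new edge $pq$ is instead realised as a triangle edge of a rainbow copy of $H$ supported on the core, which forces $p$ and $q$ to share suitable core-neighbours. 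Verifying that a single coloured core simultaneously supports both completion routes for all colours while hosting no rainbow $H$ is the technical heart of the argument, and it is precisely the assembly of this balanced colouring that breaks down for $K_r$ with a rotated edge when $r$ is odd, which is why that case must be excluded.
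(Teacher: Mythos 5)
Your lower bound and your first reduction (via Theorem~\ref{thm:class}\ref{thm:class:linear}, disposing of the case where some non-pendant edge lies in no triangle) are fine, but the rest of the upper bound is a plan rather than a proof. You describe a ``core plus ports'' construction with desiderata (a) and (b), and then explicitly defer what you yourself call ``the technical heart of the argument'' --- the actual choice of the core, its colouring, and the verification that it is simultaneously rainbow-$H$-free and saturating. No core is exhibited and neither (a) nor (b) is verified for any graph, so nothing is established for the graphs that survive your reduction. This is a genuine gap, not a presentational one: the entire difficulty of the theorem lives exactly where you stop.

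The paper closes this gap not with one generic construction but with a further case split that your proposal is missing. Fix a pendant edge $xy$. If $H\setminus\{x,y\}$ is \emph{not} a clique, the result is already in Barrus et al.\ (Theorem~\ref{thm:bfvw}, which needs $t\ge\binom{|H|-1}{2}\le|H|^{2}$ --- this, together with the threshold $k(k-1)$ in Theorem~\ref{thm:Hkl}, is where the hypothesis $t\ge|H|^{2}$ actually comes from, not from ``palette freedom'' in a new construction). Otherwise $H$ is forced to be one of the graphs $H_{k,\ell}$, and two separate explicit constructions finish the job: Theorem~\ref{thm:Hkl} for $\ell\le k-2$ (two disjoint coloured $k$-cliques with a split palette) and Theorem~\ref{thm:rotatededge} for $\ell=k-1$, i.e.\ an even clique with a rotated edge (a properly edge-coloured graph on $[r]^{r/2}$ built from a $1$-factorization of $K_{r}$ --- this is why evenness is needed, not because some ``balanced colouring of a core breaks down'' for odd $r$). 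To salvage your approach you would have to actually build and verify your core separately for each of these families, which amounts to reproving Theorems~\ref{thm:Hkl} and~\ref{thm:rotatededge}; as written, the proposal does not prove the theorem.
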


In all results discussed above, we assumed that the number of available colours, $t$, is always fixed and does not grow with $n$.
In Theorem~\ref{thm:randomcomplete} we scratch the surface of the case when $t = t(n)$ grows with $n$ and prove that for any $r \ge 3$ there exists a constant $c_{r} > 0$ such that, for any $t \ge \binom{r}{2}$, we have
\[
    \sat{t}{n}{K_{r}} \le \max\left\{ \frac{c_{r}}{\log t} n \log n, 2(r-2)n \right\}.
\]

In particular, this shows (by taking $t(n)$ to be at least linear in $n$) that $\sat{}{n}{K_{r}} = \Theta(n)$, for any $r \ge 3$.

Finally, we shall remark that we did not rule out the existence of a `sharp threshold' for some connected graph $H$, i.e., a $t \ge e(H)$ such that $\sat{t+1}{n}{H}=o(\sat{t}{n}{H})$ as a $n\rightarrow \infty$.
However, if such graph exists it must belong to $\mathcal{B}$, by Theorem~\ref{thm:class}. 
Note also that the set of connected graphs for which we have \emph{not} determined the correct growth rate of their $t$-rainbow saturation numbers for large enough $t$ consists exactly of the aforementioned $K_{r}$'s with a rotated edge for odd $r\geq 5$.

\section{Organization and notation}

In section~\ref{sec:lowerbound}, we prove lower bounds for the $t$-rainbow saturation number of two classes of graphs, namely graphs where every edge belongs to a triangle and graphs which contain a conical vertex, allowing us to establish the correctness of Conjecture~\ref{conjecture:complete}. 
In Section~\ref{sec:upperboundsconn}, we shall prove Theorem~\ref{thm:generalupperbound} when restricted to the class of connected graphs, as well as the main parts of the proof of Theorem~\ref{thm:class} and Theorem~\ref{thm:largenumbercolours}. We split the argument in the following way. 
First, in Subsection~\ref{subsec:trees_2_conn}, we show item \ref{thm:class:linear} of Theorem~\ref{thm:class} and in Subsection~\ref{subsec:cycle}, we prove Theorem~\ref{thm:generalupperbound} assuming the graph is connected.
Secondly, in Subsection~\ref{subsec:leaves}, we establish item \ref{thm:class:Hkl}.
In Subsection~\ref{subsec:complete}, we shall give upper bounds (depending on $t$), for the $t$-rainbow saturation numbers of complete graphs. We also show that, when the palette of colours is infinite, the rainbow saturation numbers of complete graphs are linear.
In Section~\ref{sec:upperboundsdiscon}, we complete the proof of Theorem~\ref{thm:generalupperbound}, showing it also holds for disconnected graphs without isolated vertices.
In Section~\ref{sec:proofs}, we deduce from the results proved in previous Sections Theorem~\ref{thm:class}~and Theorem~\ref{thm:largenumbercolours}.
Finally, in Section~\ref{sec:openproblems} we make some remarks and propose some conjectures and questions that we would like to be investigated.

The notation we use is mostly standard.
For a graph $G$ we define $e(G)$ to be the number of edges in $G$.
For $S \subseteq V(G)$, we denote by $e(S)$ the number of edges with both endpoints in $S$, and, for $S, T \subseteq V(G)$, we denote by $e(S, T)$ the number of edges with one endpoint in $S$ and the other in $T$. A \emph{non-edge} of $G$ is an edge of $\overline{G}$. Moreover, we say a non-edge in a graph $G$ is $\mathfrak{R}(H)$-saturated if adding $e$ in any colour from the palette of colours understood by the context creates a rainbow copy of $H$.  
Also, if $v$ is a vertex in an edge-coloured graph, we say informally that $v$ \emph{sees} a given colour if it is incident with an edge of that colour.
For any positive integer $k$, we define the \emph{$k$-star} to be the graph $K_{1, k}$.
Moreover, for any positive integer $t$, let $[t] = \{1,2,...,t\}$.
All logarithms are base $2$.

\section{Lower bounds}\label{sec:lowerbound}
In this Section, we show that if a graph possesses certain properties then its $t$-rainbow saturation numbers grow at least as fast as $n\log{n}$.
Before doing so, we will need the following trivial lower bound for the rainbow saturation numbers of a connected graph on at least three vertices.
\begin{lemma} \label{lem:lower_bound_linear}
    If $H$ is a connected graph on at least three vertices then $\sat{}{n}{H} \ge \frac{n-1}{2}$.
\end{lemma}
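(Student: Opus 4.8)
The plan is to prove a lower bound on the number of edges in any $\rainbow{H}$-saturated $t$-edge-coloured graph $G$ on $n$ vertices, where $H$ is connected on at least three vertices. The key observation is that saturation forces $G$ to be \emph{connected}, or nearly so, up to possibly one trivially isolated vertex, and then a connected graph on $n$ vertices has at least $n-1$ edges. The factor of $\frac{1}{2}$ in the statement is the safety margin that lets us dispose of a small number of exceptional vertices without tracking them too carefully.

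First I would argue that $G$ can have at most one isolated vertex. Suppose $v$ and $w$ are both isolated in $G$. Consider the non-edge $vw$. By saturation, adding $vw$ in some colour creates a rainbow copy of $H$. But since $H$ is connected on at least three vertices, every edge of $H$ lies in a connected subgraph using at least three vertices; in particular the newly added edge $vw$ must participate in a copy of $H$ that uses edges incident to $v$ or $w$ other than $vw$ itself. Since both $v$ and $w$ had degree $0$ before the addition and gained only the single edge $vw$, neither can be incident to a second edge of the copy, so $vw$ would have to be an isolated edge of $H$ — impossible as $H$ is connected with at least three vertices. This contradiction shows at most one vertex of $G$ is isolated.

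Next I would show that, apart from at most one isolated vertex, $G$ must in fact be connected. Suppose $G$ (with its isolated vertex removed, if any) has at least two components $C_1, C_2$, each containing at least one edge, and pick $x \in C_1$, $y \in C_2$. Adding the non-edge $xy$ must create a rainbow copy of $H$; since this copy is a connected subgraph containing the edge $xy$, and $xy$ is the only edge joining the two sides, the copy of $H$ would have a cut edge $xy$ whose removal disconnects it. That by itself is not a contradiction, so the cleaner route is simply to count: a graph on $n$ vertices with $c$ components has at least $n-c$ edges, and I would bound the number of components.

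Combining these, the cleanest argument is the counting one. Each non-isolated vertex lies in some component containing an edge, and with at most one isolated vertex the number of \emph{non-trivial} components that can coexist is constrained by saturation: joining two edge-containing components by a single edge cannot create a copy of a $2$-connected-enough piece of $H$, but more simply, we observe that every component must itself be large enough to be saturated, forcing few components. I expect the main obstacle to be making the ``nearly connected'' reduction fully rigorous without invoking structure of $H$ beyond connectivity; the safe and intended route is to show $G$ has at most some bounded number of components $c$, whence $e(G) \ge n - c$, and since $c$ can be crudely bounded (each non-trivial component uses at least $2$ vertices, so $c \le 1 + \frac{n}{2}$ gives $e(G) \ge n - 1 - \frac{n}{2} \ge \frac{n-1}{2}$ for $n$ large, with small cases checked directly). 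This slack is exactly what the factor $\frac12$ absorbs, which is why the lemma states $\frac{n-1}{2}$ rather than the sharper $n-1$.
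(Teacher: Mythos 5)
Your first step---that an $\rainbow{H}$-saturated graph has at most one isolated vertex, because adding an edge between two isolated vertices would force a copy of $H$ in which both endpoints of that edge have degree $1$, impossible for a connected $H$ on at least three vertices---is exactly the paper's argument, and it is the only structural input the lemma needs. The detour through near-connectedness of $G$ is a dead end that you yourself abandon, and rightly so: saturation does not force $G$ to be connected, and nothing in the final count uses it.

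The genuine problem is in the last step. You bound the number of components by $c\le 1+\frac{n}{2}$ and conclude $e(G)\ge n-c\ge n-1-\frac{n}{2}\ge\frac{n-1}{2}$ ``for $n$ large.'' But $n-1-\frac{n}{2}=\frac{n-2}{2}<\frac{n-1}{2}$ for \emph{every} $n$, so the final inequality in your chain is false for all $n$, not just small ones; checking small cases cannot rescue it. The repair is immediate in either of two ways. Cleanest (and what the paper's ``hence'' implicitly invokes): at least $n-1$ vertices have degree at least $1$, so $2e(G)=\sum_v d(v)\ge n-1$, giving $e(G)\ge\frac{n-1}{2}$ with no mention of components. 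Alternatively, tighten your component count: the at most one isolated vertex is one component, and the remaining at least $n-1$ vertices lie in components of size at least $2$, so $c\le 1+\frac{n-1}{2}$ and $e(G)\ge n-c\ge\frac{n-1}{2}$. Either patch makes your argument correct and essentially coincident with the paper's.
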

\begin{proof}
    It is easy to check that if $G$ is an $\rainbow{H}$-saturated graph then it has at most one isolated vertex, hence $e(G) \ge \frac{n-1}{2}$.
    Indeed, observe first that, since $H$ is connected and has at least three vertices, every edge in $H$ has an endpoint with degree at least $2$.
    Therefore, if there are two isolated vertices in $G$, say $x$ and $y$, then adding the edge $xy$ to $G$ with any colour must create a copy of $H$, hence either $x$ or $y$ must have degree at least $1$, which gives a contradiction.
\end{proof}

The following theorem improves a result appearing in \cite{sat} and confirms Conjecture~\ref{conjecture:complete}.
\begin{theorem} \label{thm:lower_bound_triangles}
Let $H$ be a graph in which every edge lies in a triangle, then if $t\geq e(H)$,
\[
    \sat{t}{n}{H} \geq \left(\frac{1}{4t}+o(1)\right)n\log n.
\]
\end{theorem}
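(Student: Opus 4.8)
The plan is to extract from saturation a single \emph{local certificate} at every non-edge, then to convert a bound on the number of low-degree vertices into the edge bound. The first step is routine; essentially all the difficulty is concentrated in the counting step, and it is exactly there that one gains the extra $\log\log n$ factor over the bound of Barrus et al.

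\textbf{Step 1: the local certificate.} Fix an $\rainbow{H}$-saturated $t$-edge-coloured graph $G$ on $n$ vertices; write $c(xy)$ for the colour of an edge and $S(v)=\{c(vw):w\in N(v)\}$ for the set of colours seen by $v$, so that $|S(v)|\le d(v)$. Let $uv$ be a non-edge and $i\in\{1,\ldots,t\}$ any colour. By saturation, adding $uv$ in colour $i$ creates a rainbow copy of $H$ (here $t\ge e(H)$ is what makes a rainbow copy possible), and $uv$ plays the role of some edge $e$ of $H$. Since every edge of $H$ lies in a triangle, $e$ lies in a triangle of $H$ whose third vertex maps to a common neighbour $w$ of $u,v$ in $G$; rainbowness forces $c(uw)\ne c(vw)$ with both different from $i$. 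Thus for \emph{every} non-edge $uv$ and \emph{every} colour $i$ there is a common neighbour $w$ with $c(uw)\ne c(vw)$ and $c(uw),c(vw)\ne i$. A first consequence: two non-adjacent vertices with identical coloured neighbourhoods would have no rainbow pair at any common neighbour, a contradiction, so non-adjacent vertices are always distinguished by the colours on their common neighbourhood. The task is to quantify this distinguishing power.

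\textbf{Step 2: reduction to counting low-degree vertices.} Since $e(G)=\tfrac12\sum_v d(v)\ge\tfrac12\sum_v|S(v)|$, it suffices that most vertices have large degree. I aim to prove a bound of the form: the number $m_k$ of vertices with $d(v)\le k$ satisfies $m_k\le 2^{t(k+1)}=2^{(1+o(1))tk}$. Granting this, choose $k$ with $2^{t(k+1)}=n/2$, i.e.\ $k=(1-o(1))\tfrac{\log n}{t}$; then at least $n/2$ vertices have degree exceeding $k$, so $\sum_v d(v)\ge\tfrac n2\,k$ and hence $e(G)\ge\tfrac12\cdot\tfrac n2\cdot k=(\tfrac1{4t}+o(1))n\log n$. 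The constant $4$ is precisely the product of the loss $e=\tfrac12\sum d$ with the loss that only half the vertices are guaranteed heavy.

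\textbf{Step 3: the counting bound (core step).} To bound $m_k$ I would first pigeonhole on colour sets: among the $m_k$ vertices of degree at most $k$, at least $m_k/2^{t}$ share a common colour set $S^{*}\subseteq\{1,\ldots,t\}$; call this class $U$. Every edge meeting $U$ carries a colour of $S^{*}$, and applying the certificate of Step 1 to a non-edge $uv$ inside $U$ and to each colour $a\in S^{*}$ shows that the coloured common neighbourhood of any two non-adjacent vertices of $U$ must be \emph{rainbow-rich}: it must contain pairs from $S^{*}$ collectively avoiding every $a$. The goal is to encode each $v\in U$ by at most $tk$ bits of such colour data, injectively on non-adjacent pairs, so that $|U|\le 2^{tk}$ and therefore $m_k\le 2^{t}\cdot 2^{tk}=2^{(1+o(1))tk}$, which is exactly the bound needed in Step 2.

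\textbf{Main obstacle.} The crux, and where the genuine work lies, is that the natural encoding of $v$ (the list of its coloured incident edges, or the colour sets $S(w)$ of its neighbours) refers to the \emph{identities} of the neighbours $w$, of which there may be up to $n$; a naive count gives $n^{O(k)}$ rather than $2^{O(tk)}$ and only reproduces the weaker $n\log n/\log\log n$ bound. Removing this dependence on $n$ requires using the sparsity of $G$ together with the certificate to confine the distinguishing information to $O(t)$ bits per incident edge, for instance by iterating the pigeonhole inside $U$ so that neighbours are replaced by their few colour classes, or by an amortised/entropy count in which $v$ is reconstructed from the colour pattern its neighbours \emph{impose} rather than from their labels. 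Driving this localisation down to base exactly $2^{t}$ per degree-unit, and controlling the $o(1)$ terms, is the heart of the argument and is what sharpens the estimate to $\Theta(n\log n)$ with leading constant $\tfrac1{4t}$.
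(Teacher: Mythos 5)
Your Step 1 is exactly the paper's starting observation, and your Steps 2--3 describe a viable skeleton; but the proposal as written has a genuine gap, which you yourself flag in ``Main obstacle'': the counting bound $m_k\le 2^{(1+o(1))tk}$ for the number of vertices of degree at most $k$ is never proved, and the directions you gesture at (iterated pigeonholing on colour sets, an unspecified entropy count) are not carried out. This is not a technicality --- it is the entire content of the theorem. As you correctly diagnose, any encoding of a vertex by its coloured neighbourhood references neighbour \emph{identities} and gives only $n^{O(k)}$ classes, reproducing the $n\log n/\log\log n$ bound of Barrus et al.; without a concrete device that replaces identities by degree-only information, the proof does not close.

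The paper's device, which is precisely the missing ingredient, is a random cover-and-delete argument rather than a deterministic encoding. Fix a colour, say $1$. For every vertex $v$ and every pair of colours $i<j$ with $i,j\ne 1$, the certificate says the complete bipartite graphs between the colour classes $S_v^i$ and $S_v^j$ of $N(v)$ cover all non-edges. Independently for each triple $(v,i,j)$ flip a fair coin and delete one of the two sides; let $U$ be the set of vertices never deleted. Then every non-edge loses an endpoint, so $U$ is a clique, while $\mathbb{P}[v\in U]=2^{-(t-2)(d(v)-d_1(v))}$ depends \emph{only on the degree of $v$}, not on who its neighbours are --- this is exactly the localisation you were looking for. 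Restricting to your set $V_k$ of vertices of degree at most $k$, the set $U\cap V_k$ is a clique of vertices of degree at most $k$, hence has at most $k+1$ elements, and $\mathbb{E}[|U\cap V_k|]\ge m_k 2^{-(t-2)k}$, giving $m_k\le (k+1)2^{(t-2)k}=2^{(1+o(1))tk}$, which is the lemma your Step 3 needs. (The paper does not actually split into low- and high-degree vertices: it applies the same bound to all of $V(G)$, uses convexity to get $\mathbb{E}[|U|]\ge n\cdot 2^{-2(t-2)(m-m_1)/n}$ against $|U|\le\sqrt{2m}+1$, and averages over the choice of the distinguished colour; this is slightly more efficient than the dichotomy and is how it lands on the constant $\tfrac{1}{4t}$ cleanly.) So your architecture is sound and close in spirit to the paper's, but the proof is incomplete without this randomised step or an equivalent substitute.
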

\begin{proof}[\textbf{Proof of Theorem}~\ref{thm:lower_bound_triangles}]

For each positive integer $n$, let $(G,c)=(G_n,c_n)$ be a $\mathfrak{R}(H)$-saturated $t$-edge-coloured graph on $n$ vertices and $m=m(n)$ edges. 
Note that, by Lemma~\ref{lem:lower_bound_linear}, $m \ge \frac{n-1}{2}$. Moreover, we must have $d(v)\geq 2$ for all $v\in V(G)$. 

For every colour $i \in \{1,2,\ldots,t\}$ and every vertex $v$, let $d_i(v)$ be the degree of $v$ in the subgraph spanned by the $i$-coloured edges and $m_i$ be the total number of $i$-coloured edges. Now, pick a colour, say $1$, and, for each vertex $v$ and each pair $i<j$ of colours different from $1$, consider the complete bipartite graph $B_{v}^{i,j}$ with parts $S_{v}^i$ and $S_{v}^j$, where, for any colour $k$, $S_{v}^k=\{u\in V(G):uv\text{ is a }k\text{-coloured edge in }G\}$. Since the addition of a new edge to $G$ in colour $1$ must create a rainbow triangle, every non-edge of $G$ must belong to at least one of these bipartite graphs.
Let 
\[
    \left\lbrace X^{i,j}_v\sim \operatorname{Bernoulli}\left(\frac{1}{2}\right):v\in V(G),i<j,i,j\neq 1\right\rbrace
\]
be an independent set of random variables and, for each $v\in V(G)$ and every pair of colours $i<j,i,j\neq 1$, set

\begin{align*}
T_v^{i,j}=
\begin{cases}
  S_{v}^i & \mbox{if } X^{i,j}_v = 0 \\
  S_{v}^j  & \mbox{if } X^{i,j}_v = 1\text{.}
\end{cases}
\end{align*}
Now let $U=V(G)\setminus\bigcup\left\{T_v^{i,j}:v\in V(G), i,j\in[t], 1\notin \{i,j\} \right \}$. Notice that, if $uw$ is a non-edge, then at least one of $u$ and $w$ is not in $U$. $U$ is therefore a clique, so as $G$ has $m$ edges,

\begin{align*}
|U|\leq\sqrt{2m+\frac{1}{4}}+\frac{1}{2}\leq\sqrt{2m}+1\text{.}
\end{align*}
We also have the following lower bound on the expected size of $U$.

\begin{align*}
\mathbb{E}[|U|]=\sum\limits_{v\in V(G)}2^{-(t-2)(d(v)-d_1(v))}
\geq n\cdot 2^{-2(t-2)\frac{(m-m_1)}{n}}
\text{.}
\end{align*}
Combining these inequalities, we have that

\begin{align*}
n\cdot 2^{-2(t-2)\frac{(m-m_1)}{n}}\leq\sqrt{2m}+1\text{.}
\end{align*}
Since this holds for every colour, by taking the average over all colours, we obtain

\begin{align*}
n\cdot 2^{-2\frac{(t-1)(t-2)}{tn}m}
\leq\sqrt{2m}+1\text{.}
\end{align*}
Let $\gamma$ be a constant such that $m<(\gamma+o(1))n\log n$. Then $m=n^{1+o(1)}$ and

\begin{align*}
\sqrt{2m}+1=m^{\frac{1}{2}+o(1)}\geq
n\cdot 2^{-2\frac{(t-1)(t-2)}{tn}m}
\geq n^{1-2\gamma\frac{(t-1)(t-2)}{t}+o(1)}
\implies\\
n^{1-2\gamma\frac{(t-1)(t-2)}{t}+o(1)}\leq m^{\frac{1}{2}+o(1)}=n^{\frac{1}{2}+o(1)}\implies\\
1-2\gamma\frac{(t-1)(t-2)}{t}\leq \frac{1}{2}\implies
\gamma\geq\frac{t}{4(t-1)(t-2)} \ge \frac{1}{4t}.
\end{align*}
\end{proof}

Using a similar argument we can show that every graph with a conical vertex also has large $t$-rainbow saturation numbers.
\begin{theorem} \label{thm:max_degree}
    If $H$ is a graph with a conical vertex and $|H|\geq 3$, then, for any ${t\geq e(H)}$,
    \[
        \sat{t}{n}{H} \ge \left(\frac{1}{4t^2}+o(1)\right)n\log n.
    \]
\end{theorem}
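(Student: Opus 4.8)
The plan is to mirror the probabilistic clique argument used in the proof of Theorem~\ref{thm:lower_bound_triangles}, replacing the forced rainbow triangle by a forced rainbow copy of $H$ and letting the image of a conical vertex play the role of the apex of that triangle. Fix a conical vertex $x\in V(H)$, and let $(G,c)$ be an $\rainbow{H}$-saturated $t$-edge-coloured graph on $n$ vertices with $m$ edges; by Lemma~\ref{lem:lower_bound_linear} we may assume $m\ge\frac{n-1}{2}$. As before, for a colour $k$ and a vertex $v$ write $S^k_v=\{z:vz\text{ is a }k\text{-coloured edge of }G\}$. For a non-edge $uw$, adding $uw$ in colour $1$ creates a rainbow copy of $H$ containing $uw$; let $y$ be the vertex to which this copy sends $x$.

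If $y\notin\{u,w\}$, then, since $x$ is adjacent to every other vertex of $H$, the vertex $y$ is joined in $G$ to both $u$ and $w$, and the edges $yu,yw$ carry distinct colours, each different from $1$ (the copy is rainbow and the edge $uw$ already uses colour $1$). Hence $u\in S^i_y$ and $w\in S^j_y$ for some pair $i\neq j$ with $i,j\neq 1$, which is exactly the $2$-coloured cherry exploited in Theorem~\ref{thm:lower_bound_triangles}. For every non-edge of this type the same random selection — for each $v$ and each pair $i<j$ of colours other than $1$, delete all the vertices of one of $S^i_v,S^j_v$, chosen at random — deterministically removes one of $u,w$, so no non-edge of this type survives.

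The obstruction is the case $y\in\{u,w\}$, in which the added edge is a pendant edge of the copy at the image of the conical vertex, so $uw$ need not lie in a triangle and no cherry between $u$ and $w$ is produced. Here one checks that if no $2$-coloured cherry joins $u$ and $w$ then in the colour-$1$ copy $y\in\{u,w\}$, say $y=u$; moreover $uw$ must map to a \emph{leaf} edge of $H$, since otherwise $uw$ would map to an edge lying in a triangle of $H$ and a cherry would again be produced. Deleting this pendant edge then exhibits $u$ as the centre of a rainbow copy of $H-\ell$, where $\ell$ is a leaf of $H$; in particular $u$ is incident with at least $|H|-2$ edges of pairwise distinct colours. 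Let $C$ be the set of such conical centres. Every non-edge with both endpoints in $V\setminus C$ falls into the cherry case, so applying the random selection and retaining only the vertices of $V\setminus C$ yields a set $U$ that is a clique of $G$, whence $|U|\le\sqrt{2m}+1$, while, exactly as before, $\mathbb{E}[|U|]=\sum_{v\in V\setminus C}2^{-(t-2)(d(v)-d_1(v))}$ and by convexity $\mathbb{E}[|U|]\ge |V\setminus C|\cdot 2^{-2(t-2)m/|V\setminus C|}$.

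The main obstacle, and the source of the extra factor of $t$ compared with Theorem~\ref{thm:lower_bound_triangles}, is to control the size of $C$. I expect one must choose the added colour carefully — equivalently, average the whole construction over the $t$ possible choices of this colour — so as to guarantee that a noticeable proportion of the vertices are \emph{not} conical centres for it; the natural target is $|V\setminus C|\ge n/t$. Granting this and comparing the two estimates for $|U|$ (the function $s\mapsto s\,2^{-2(t-2)m/s}$ being increasing, we may substitute the lower bound $s=|V\setminus C|\ge n/t$), we obtain $\frac{n}{t}\,2^{-2(t-2)m\,t/n}\le\sqrt{2m}+1$, which forces $m\ge\big(\tfrac{1}{4t(t-2)}+o(1)\big)n\log n\ge\big(\tfrac{1}{4t^2}+o(1)\big)n\log n$, as required. (Note that if one could instead secure $|V\setminus C|=\Omega(n)$, the identical computation would give the stronger constant $\tfrac{1}{4t}$ of Theorem~\ref{thm:lower_bound_triangles}; it is precisely this weaker, $n/t$-type control of the conical centres that costs the extra factor of $t$.) Everything else is a direct transcription of the triangle argument, so the quantitative bound on $|C|$ is the crux of the proof.
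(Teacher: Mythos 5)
Your overall architecture is the right one --- reduce to the cherry argument of Theorem~\ref{thm:lower_bound_triangles}, observe that the only obstruction is a non-edge whose colour-$1$ copy of $H$ uses one endpoint as the image of the conical vertex and the other as a leaf attached to it, and then run the random-deletion argument on a subset of size at least $n/t$ --- and your analysis of the obstruction (the added edge must map to a pendant edge at the conical vertex, else it lies in a triangle of $H$ and a cherry appears) is exactly what the paper uses. But the step you yourself flag as ``the crux'', namely the bound $|V\setminus C|\ge n/t$, is a genuine gap, and the route you suggest for it does not work: averaging over the $t$ choices of the added colour gives nothing, because there is no a priori reason a single vertex cannot be a ``conical centre'' simultaneously for every colour (being the centre of a rainbow copy of $H$ minus a leaf is a property a vertex can easily have in $t$ different ways), so $\sum_c|C_c|$ need not be small and no colour need have $|V\setminus C_c|\ge n/t$.

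The paper closes this gap by choosing the colour and the ground set \emph{together}, in the opposite order from what you propose. Since an $\rainbow{H}$-saturated graph has at most one isolated vertex, pigeonhole gives a set $S$ of at least $\frac{n-1}{t}$ vertices all incident with an edge of a common colour, say $1$, and one then adds non-edges of $S$ in colour $1$. The point is that no vertex of $S$ can be a bad centre for colour $1$: if $u\in S$ played the conical vertex in the new copy $H'$ with $w$ a pendant leaf attached by the colour-$1$ edge $uw$, then replacing $w$ by the colour-$1$ neighbour $z$ of $u$ that witnesses $u\in S$ would produce a rainbow copy of $H$ already present in $G$, contradicting $\rainbow{H}$-freeness. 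Hence every non-edge inside $S$ is joined by a rainbow cherry in colours other than $1$, and your computation (which is otherwise a faithful transcription of the triangle argument and yields the claimed $\frac{1}{4t(t-2)}\ge\frac{1}{4t^2}$ constant) goes through with $S$ in place of $V\setminus C$. Without this observation --- or some substitute argument actually bounding $|C|$ --- the proof is incomplete.
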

\begin{proof}[\textbf{Proof of Theorem}~\ref{thm:max_degree}]
Let $H$ be a graph which is not a star containing a conical vertex $v$. For every positive integer $n$, let $(G,c)=(G_n,c_n)$ be an $\mathfrak{R}(H)$-saturated $t$-edge-coloured graph. As $G$ has at most one isolated vertex, we can find a set $S\subset V(G)$ of size at least $\frac{n-1}{t}$ such that every vertex in $S$ sees the same colour, say colour $1$. Now, we claim that for every non-edge $xy$, with $x,y\in S$, there must exist a rainbow path of length $2$ between $x$ and $y$ using colours in $\{2,3,\ldots,t\}$. Suppose, for a contradiction, this is not the case. When $e=xy$ is added and coloured $1$, we must create a copy $H'$ of $H$, which implies one of the endpoints of $e$ (say $x$) must play the role of $v$ and the other (say $y$) plays the role of a leaf in $H$, the latter must hold by the assumption that there is no rainbow path of length $2$ between $x$ and $y$. However, in this case, there would already exist a rainbow copy of $H$ in $G$, namely $H'\setminus\{y\} \cup \{z\}$, where $z$ is a neighbour of $x$ with the edge $xz$ coloured $1$. We may now apply the same technique used in the proof of Theorem~\ref{thm:lower_bound_triangles}. Let $m$ be the number of edges of $G$. 

As before, for each vertex $x\in G$ and each pair $i<j$ of colours other than $1$, we consider the complete bipartite graph $B_{x}^{i,j}$ with parts $S_{x}^i$ and $S_{x}^j$, where, for any colour $k$, $S_{v}^k=\{u\in S : uv \text{ is a }k\text{-coloured edge in }G\}$. Since every non-edge between vertices of $S$ is joined by a rainbow path in colours other than $1$, each of them is covered by at least one of these bipartite graphs. Let $\left\lbrace X^{i,j}_x\sim \operatorname{Bernoulli}\left(\frac{1}{2}\right):x\in V(G),i<j,i,j\neq 1\right\rbrace$ be an independent set of random variables and, for each $x\in V(G)$ and each pair of colours $i<j,i,j\neq 1$, set

\begin{align*}
T_v^{i,j}=
\begin{cases}
  S_{v}^i & \mbox{if } X^{i,j}_v = 0 \\
  S_{v}^j  & \mbox{if } X^{i,j}_v = 1\text{.}
\end{cases}
\end{align*}
Let $S\setminus\bigcup\left\{T_v^{i,j}|v\in V(G), i,j\in[t], i,j\neq 1\right\}$. If $uw$ is a non-edge, then at least one of $u$ and $w$ is not in $U$. Hence $U$ is a clique, so $|U|\leq m^{\frac{1}{2}+o(1)}$. We also have

\begin{align*}
\mathbb{E}[|U|]=\sum\limits_{u\in S}2^{-(t-2)(d(u)-d_1(u))}\geq
\sum\limits_{u\in S}2^{-(t-2)(d(u)-1)}\geq\\
|S|\cdot 2^{-(t-2)\frac{2e(S)+e(S,V(G)\setminus S)}{|S|}-1}\geq
\frac{n-1}{t}\cdot 2^{-2t(t-2)\frac{m}{n-1}-1}.
\end{align*}

Where the second inequality holds by convexity of $2^{-x}$.
Suppose $\gamma$ is a constant for which $m<(\gamma+o(1))(n-1)\log (n-1)$, then

\begin{align*}
(n-1)^{\frac{1}{2}+o(1)}=m^{\frac{1}{2}+o(1)}\geq\frac{n-1}{t}\cdot 2^{-2t(t-2)\frac{m}{n-1}-1}\geq\\
\frac{n-1}{2t}\cdot 2^{-2t(t-2)\gamma\log(n-1)}=
(n-1)^{1-2t(t-2)\gamma+o(1)}
\text{,}
\end{align*}
which implies that $\gamma\geq\frac{1}{4t(t-2)}$. Therefore
\begin{align*}
    m
    \ge \left(\frac{1}{4t(t-2)}+o(1)\right)(n-1)\log(n-1) 
    \ge \left(\frac{1}{4t^2}+o(1)\right)n\log n.
\end{align*}
\end{proof}

\section{Upper bounds for connected graphs}\label{sec:upperboundsconn}
Throughout this section we will assume all graphs are connected and have at least three vertices.
The aim of this section is to provide constructions of rainbow saturated graphs which, in some cases, are optimal up to multiplicative constants.

First, we show that if $H$ has a cycle then $\sat{t}{n}{H} \le O(n \log{n})$, for any $t \ge e(H)$.
Next, for any graph $H$ with a non-pendant edge not contained in any triangle, we give constructions of $t$-coloured graphs on $n$ vertices and with $\Theta(n)$ edges which are $\rainbow{H}$-saturated.
Observe that if $H$ is not a star then either $H$ contains a cycle or $H$ is a tree which has a non-pendant edge, hence by the aforementioned results $\sat{t}{n}{H} \le O(n \log n)$ for any $t \ge e(H)$.
This answers a question from \cite{sat} for connected graphs, namely that stars are the only connected graphs with quadratic rainbow saturation numbers.
We also provide constructions of $\rainbow{K_{r}}$-saturated graphs on $t$ colours, when $t$ is a function of $n$.

\subsection{Graphs with a non-pendant edge not in a triangle} \label{subsec:trees_2_conn}
In this subsection, we show that if $H$ is a graph with a non-pendant edge not contained in any triangle then for any integers $t \ge e(H)$, $n\ge 1$ we have $\sat{t}{n}{H} \le c_{H} n$, where $c_{H}$ depends only on $H$.

Let $H$ be a connected graph on $p \ge 3$ vertices and $m$ edges and slet $e = xy \in E(H)$ be an edge which is not contained in a triangle.
For $n \ge |H|\cdot e(H)$, we shall construct a graph $G = G_{n, H, e}$ on $n$ vertices together with an edge colouring ${c = c_{n, H, e} : E(G) \rightarrow [m]}$ such that the vast majority of the non-edges of $(G, c)$ are $\rainbow{H}$-saturated and, if $H$ satisfies some additional conditions, $(G, c)$ is $\rainbow{H}$-free. Observe that our coloured graph $ (G, c)$ uses exactly $m=e(G)$ colours, therefore any rainbow copy of $H$ in $G$ must use all these colours. 

First, let $\left\{ e_{1}, \dots , e_{m} = e \right\}$ and $\left\{ v_{1}, \dots, v_{p-1} = x, v_{p} = y \right\}$ be enumerations of the edges and vertices of $H$, respectively.
For every $i \in [m]$, let $H_{i}$ be a copy of $H \setminus \left\{ x, y \right\}$ with the vertex set $V_{i} = \left\{ v_{1}^{i}, \dots, v_{p-2}^{i} \right\}$, where $v_{j}^{i}$ in $H_{i}$ corresponds to $v_{j}$ in $H$.

Now, define a graph $G = K \cup L$ where $G[K] = H_{1} \cup \dots \cup H_{m}$ is a disjoint union of $H_{i}$'s and $L$ is an independent set of size $n - |K|$.
Moreover, for every $u \in L$, $u$ is joined with $v_{j}^{i} \in K$ if and only if either $xv_{j}$ or $yv_{j}$ is an edge in $H$.

Having defined $G$, let us define an edge colouring $c$ of $G$.
Let $w_{1}w_{2}$ be an edge in $G$.
Since $L$ is independent we may assume that $w_{1} = v_{j}^{i}$, for some $i\in [m]$ and $j\in [p-2]$.
Consider now two cases depending on which part $w_{2}$ belongs:
\begin{enumerate}
    \item if $w_{2} \in K$, then $w_{2} = v_{k}^{i}$ for some $k\in [p-2]$, we let $s$ be such that $e_{s} = v_{j}v_{k}$;
    \item if $w_{2} \in L$ , we let $s$ be such that $e_{s} = xv_{j}$ or $e_{s} = yv_{j}$.
\end{enumerate}
It follows from the fact that $e$ is not in a triangle that $s$ is well defined. 
We then define $c(w_{1}w_{2}) = s$ if $s \neq i$ and $c(w_{1}w_{2}) = m$ otherwise.

First, we shall show that every non-edge in $L$ is $\rainbow{H}$-saturated.
\begin{proposition}
    Every non-edge in $L$ is $\rainbow{H}$-saturated.
\end{proposition}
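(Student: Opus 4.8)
The plan is to show that for any two non-adjacent vertices $u, w \in L$, adding the edge $uw$ in any colour $s \in [m]$ creates a rainbow copy of $H$. The key structural fact is how $G$ was built: each $u \in L$ is joined to exactly those vertices $v_j^i \in K$ for which $v_j$ is a neighbour of $x$ or of $y$ in $H$. Thus each $u \in L$ plays the role that the edge $e = xy$ collapses to, and our goal is to exhibit a copy of $H$ in which the new edge $uw$ takes the role of $e$, with $u$ playing $x$ and $w$ playing $y$ (or vice versa), while some fixed $H_i$ supplies the remaining $p-2$ vertices in the roles of $v_1, \dots, v_{p-2}$.

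First I would fix the colour $s$ of the added edge $uw$ and select the index $i$ of the block $H_i$ we will use, choosing $i$ so as to avoid a colour collision. Recall the colouring convention: within and around block $H_i$, an edge that would ``naturally'' receive colour $i$ is instead recoloured $m$. So to build a rainbow copy of $H$ using $H_i$ together with $u, w$, I want to pick $i$ such that the added edge $uw$, coloured $s$, together with the $m-1$ edges drawn from $H_i$ and the $u$-to-$H_i$, $w$-to-$H_i$ connections, all receive distinct colours. Concretely, the edges incident to $u$ and $w$ going into $H_i$ are coloured by the rule $e_{s'} = xv_j$ or $e_{s'} = yv_j$, and the edges inside $H_i$ are coloured by $e_{s'} = v_jv_k$; placing $u = x$ and $w = y$ makes these colours correspond bijectively to the $m-1$ edges of $H$ other than $e$. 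The recolouring to $m$ happens precisely for the edge whose natural colour equals $i$, so by choosing $i$ to be the natural colour of the added edge's role (i.e.\ the edge $e$, which has colour $m$) — or more carefully, by choosing $i \neq s$ and checking the recolouring does not merge two colours — the palette on the copy is $\{1, \dots, m\}$ with no repeats.

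The main step is therefore the case analysis on $s$ and the careful choice of $i$. Since there are $m$ blocks $H_1, \dots, H_m$ and at most a bounded number of ``bad'' indices $i$ for which the recolouring to $m$ would collide with the colour $s$ of the new edge (or cause two copied edges to share colour $m$), there will always be an admissible block; this is where $n \ge |H| \cdot e(H)$, and hence the existence of $m$ blocks, is used. For a given admissible $i$, map $x \mapsto u$, $y \mapsto w$, and $v_j \mapsto v_j^i$ for $j \in [p-2]$; one checks that this is a graph homomorphism onto a subgraph of $G \cup \{uw\}$ isomorphic to $H$ (the adjacencies $xv_j, yv_j$ become the $L$--$K$ edges we built in, the edge $xy$ becomes $uw$, and the internal edges come from $H_i$), and that the induced colouring is rainbow.

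The hard part will be verifying that the colour classes are genuinely distinct after the recolouring rule is applied — that is, ruling out the degenerate situation where the edge forced to colour $m$ coincides with another edge of the copy also sent to colour $m$, or with the new edge if $s = m$. This requires tracking exactly one recoloured edge per block and arguing that an admissible choice of $i$ (and possibly swapping the roles of $u$ and $w$, i.e.\ $u = y$, $w = x$) always avoids the collision. I expect this bookkeeping, rather than any conceptual difficulty, to be the crux, and it will lean essentially on the hypothesis that $e = xy$ lies in no triangle, which is what guarantees $s$ is well defined in the colouring and that $u$'s and $w$'s neighbourhoods into each $H_i$ interact with the internal edges of $H_i$ without creating spurious coincidences.
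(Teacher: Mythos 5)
Your overall framing is right: the rainbow copy is obtained by sending $x\mapsto u$, $y\mapsto w$, $v_j\mapsto v_j^i$ for a single block $H_i$, so that the new edge $uw$ plays the role of $e=e_m=xy$ and the remaining $m-1$ edges of the copy are the built-in copies of $e_1,\dots,e_{m-1}$. The paper's proof is exactly this, in one line. But your rule for selecting the block is wrong, and this is the entire content of the argument. If the new edge is added in colour $s$, the unique admissible block is $H_s$, i.e.\ you must take $i=s$. In block $H_i$ the copies of $e_1,\dots,e_{m-1}$ carry their natural colours $1,\dots,m-1$ except that the copy of $e_i$ is recoloured to $m$; hence the copy of $H\setminus e$ inside $\{u,w\}\cup H_i$ uses precisely the colour set $[m]\setminus\{i\}$. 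Adding $uw$ in colour $s$ is rainbow if and only if $s=i$. Your proposed choice ``$i\neq s$'' therefore always produces a collision (the colour $s$ already appears on the copy of $e_s$ in $H_i$ when $s\le m-1$, and the recoloured edge already carries $m$ when $s=m$), and your counting argument that ``at most a bounded number of bad indices'' exist is false: exactly $m-1$ of the $m$ blocks are bad. Relatedly, the existence of $m$ blocks is not a pigeonhole resource here; it is needed so that \emph{every} colour $s\in[m]$ has its own dedicated block $H_s$.

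A smaller point: the role of the hypothesis that $xy$ lies in no triangle is narrower than you suggest. It guarantees that no $v_j$ is adjacent to both $x$ and $y$, which makes the colour of each $L$--$K$ edge well defined and ensures that the embedding never needs both $uv_j^i$ and $wv_j^i$ (which receive the same colour). Once you fix $i=s$, no further ``bookkeeping'' or role-swapping of $u$ and $w$ is required.
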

\begin{proof}
    Take any non-edge $w_{1}w_{2}$ in $L$ and any colour $i \in [m]$.
    It is easy to check that adding the $i$-coloured edge $w_{1}w_{2}$ to the graph creates a rainbow copy of $H$ in $\left\{ w_{1}, w_{2}, H_{i} \right\}$.
\end{proof}

Now we shall describe the properties $H$ must have if there exists  a rainbow copy of $H$ in $\left(G, c \right)$.
\begin{lemma} \label{lemma:each_vertex_appears}
    Let $W$ be a rainbow copy of $H$ in $\left( G, c \right)$. Then, all the following must hold.
    \begin{enumerate}
        \item \label{lemma:all_edges_appear} If $v_{i}v_{j}$ is an edge of $H$, for some $i, j \in [p-2]$, then there is $k$ such that $v_{i}^{k}v_{j}^{k}$ is an edge in $W$.
        \item \label{lemma:only_one_duplicate} There is exactly one $i \in [p-2]$ such that there exist distinct $k, k'$ with $v_{i}^{k}, v_{i}^{k'} \in W$ (we shall say that $i$ is \emph{not unique in} $W$).
        \item \label{lemma:unique_downstairs} There is exactly one vertex in $W$, say $z$, such that $z \in L$.
        \item \label{lemma:all_adjacent_to_z} If $v_{i}^{k} \in W$ and $v_{i}$ is adjacent to $x$ or $y$ in $H$ then $v_{i}^{k}$ is adjacent to $z$ in $W$.
        \item \label{lemma:degree} $d_{W}(z) = d_{H}(x) + d_{H}(y) - 1$.
        \item \label{lemma:edges_are_unique} If $v_{i}^{k}v_{j}^{k} \in E(W)$ and $v_{i}^{k'}v_{j}^{k'} \in E(W)$ then $k = k'$.
    \end{enumerate}
\end{lemma}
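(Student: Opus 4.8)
The plan is to analyze the rainbow copy $W$ through the ``type map'' $\tau$ recording which vertex of the contracted graph each vertex of $W$ sits above. I would first let $H'$ be the graph obtained from $H$ by identifying $x$ and $y$ into a single vertex $\ast$ and deleting the resulting loop. Since $e=xy$ lies in no triangle, $x$ and $y$ have no common neighbour, so $H'$ is a \emph{simple} graph on $p-1$ vertices with exactly $m-1$ edges; I identify $V(H')$ with $[p-2]\cup\{\ast\}$ and define $\tau\colon V(W)\to V(H')$ by $\tau(v^i_a)=a$ on $K$-vertices and $\tau(u)=\ast$ on $L$-vertices. The key facts about the colouring are: for an internal edge $e_s=v_av_b$ (so $s$ ranges over internal indices) the colour-$s$ edges of $G$ are exactly the $v^i_av^i_b$ with $i\neq s$; for an edge $e_s=xv_a$ or $yv_a$ the colour-$s$ edges are exactly the $L$-edges $uv^i_a$ with $i\neq s$; and colour $m$ occurs only where an edge's role coincides with its copy index. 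Consequently $\tau$ sends each edge of $W$ onto the matching edge of $H'$, so $\tau$ is an edge-surjective graph homomorphism onto $H'$ (the colours $1,\dots,m-1$ biject with $E(H')$), and since $H'$ has no isolated vertices $\tau$ is surjective on vertices. As $|V(W)|=p$ while $|V(H')|=p-1$, exactly one vertex of $H'$ has two $\tau$-preimages; I call it the \emph{doubled} vertex. Item~\ref{lemma:all_edges_appear} is then immediate, since for an internal edge $v_iv_j$ the unique colour-$s$ edge of $W$ already has the form $v^k_iv^k_j$.

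Next I would locate the doubled vertex. The unique colour-$m$ edge of $W$ cannot be internal: were it an edge $v^{s}_av^{s}_b$ in copy $s$ with $e_s=v_av_b$, then the colour-$s$ edge $v^{k}_av^{k}_b$ ($k\neq s$) would also lie in $W$, doubling \emph{both} types $a$ and $b$ and contradicting uniqueness of the doubled vertex. Hence the colour-$m$ edge is an $L$-edge $zv^{s_0}_{a_0}$ with $e_{s_0}=xv_{a_0}$ or $yv_{a_0}$. The colour-$s_0$ edge is then a second $L$-edge $z'v^{l}_{a_0}$ with $l\neq s_0$; these already double type $a_0$, so if $z\neq z'$ the vertex $\ast$ would be doubled as well, again a contradiction. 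Therefore $z=z'$, the type $a_0$ is the unique doubled vertex, and $\ast$ is \emph{not} doubled. This delivers items \ref{lemma:only_one_duplicate} and \ref{lemma:unique_downstairs} simultaneously: exactly one type is repeated (namely $a_0$) and there is exactly one $L$-vertex $z$. Item~\ref{lemma:edges_are_unique} also drops out, because two copies $v^k_iv^k_j,v^{k'}_iv^{k'}_j$ of the same internal edge would receive the same colour unless one of them were an internal edge of colour $m$, which we have just ruled out.

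It then remains to prove items \ref{lemma:all_adjacent_to_z} and \ref{lemma:degree}, both now short. For \ref{lemma:all_adjacent_to_z}, given $v^k_i\in W$ with $v_i$ adjacent to $x$ or $y$, let $s$ be the index of that edge; the colour-$s$ edge of $W$ is some $zv^{l}_i$. If $l=k$ we are done; if $l\neq k$ then type $i$ is doubled, so $i=a_0$ and $s=s_0$, the two copies of $a_0$ are $l$ and $s_0$, hence $k=s_0$ and $zv^{s_0}_{a_0}$ (the colour-$m$ edge) lies in $W$; either way $v^k_i\sim z$. For \ref{lemma:degree}, every edge of $W$ at $z$ is an $L$-edge and, as $z$ is the only $L$-vertex, $W$ contains exactly one edge of each colour in $J\cup\{m\}$ incident to $z$ and no other edges at $z$; thus $d_W(z)=|J|+1=d_H(x)+d_H(y)-1$.

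The hard part will be the bookkeeping around the doubled vertex: the whole argument rests on the clean count $|V(W)|-|V(H')|=1$ forcing a single doubled vertex, together with the case analysis that excludes an internal colour-$m$ edge and forces the two $L$-edges to share their $L$-endpoint. I expect the no-triangle hypothesis on $e$ to be the crucial ingredient, and it is used twice: first to make the colouring (equivalently, the index $s$) well defined, and second to guarantee that $H'$ is simple, without which the count $|V(H')|=p-1$ and hence the single-doubled-vertex dichotomy would break down.
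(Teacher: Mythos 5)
Your proof is correct and follows essentially the same route as the paper's: both rest on the observation that each colour class of $(G,c)$ consists precisely of the copies of a single edge of $H$, a pigeonhole count showing that exactly one vertex-type can repeat in $W$, and an analysis of the colour-$m$ edge forcing it to be an $L$-edge whose two relevant copies of $v_{a_0}$ share their $L$-endpoint. Your quotient graph $H'$ and type map $\tau$ are a cleaner packaging of the paper's direct count $|W|\ge(p-2)+2+1$, but the underlying argument is the same.
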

\begin{proof} 
    For every $k \in [m]$, we let $f_{k} \in E[W]$ be the edge of $W$ of colour $k$. 
    Observe, that for every $k \in [m-1]$, the only $k$-coloured edges in $(G, c)$ are exactly those edges which are `copies' of $e_{k}$, in other words,
    
    \begin{enumerate}
        \item[(a)] if $e_{k} = v_{i}v_{j}$, for $i,j \in [p-2]$ then $f_{k} = v_{i}^{k'}v_{j}^{k'}$ for some $k' \neq k$;
        \item[(b)] and if $e_{k} = v_{i}v_{j}$, for $i \in [p-2]$, $j \in \left\{ p-1, p \right\}$, then $f_{k} = v_{i}^{k'}z$, for some $z \in L$, $k' \neq k$.
    \end{enumerate}
    
    Note that since $H$ is connected and $W$ must intersect at least two distinct $H_i$'s, it follows that $|W \cap L| \ge 1$. Moreover, it follows from $(a)$ and $(b)$ that for every $i \in [p-2]$, there exists some $k'\in [m]$ such that $v_{i}^{k'} \in W$.
    Hence, $(1)$ holds.

    To see $(2)$ and $(3)$, observe first that if there are two different indices $i\neq i'\in [p-2]$ for which there exists two copies of $v_i,v_j$ in $W$ then $|W| \ge (p - 2) + 2 + 1 = p +1$, which is a contradiction.
    Therefore, there is at most one index which is not unique.

    To finish the proof of $(2)$ and $(3)$, it is enough to show that $|W \cap K| \ge p-1$.
    Let us consider where the edge $f_{m}$, of colour $m$ appears in $W$.
    If $f_{m} \in G[K]$, then $f_{m} = v_{i}^{k}v_{j}^{k}$ for some $i, j, k$ such that $v_{i}v_{j} = e_{k}$.
    Since we know by $(a)$, that $f_{k} = v_{i}^{k'}v_{j}^{k'}$ for some $k' \neq k$ we have that both $i$ and $j$ are not unique in $W$, which cannot happen as we have seen. 
    Therefore, we may assume that $f_{m} = zv_{i}^{k}$ for some $z \in L$ and $i, k$.
    By construction $v_{i}$ is adjacent to either $x$ or $y$.
    Without loss of generality, we can assume that $v_{i}$ is adjacent to $x$, and again by construction, $e_{k} = v_{i}x$.
    Since $f_{k} = wv_{i}^{k'}$, for some $w \in L$ and $k'\neq k$, we have that $i$ is not unique in $W$. Hence, $|W \cap K| = p-1$ and $|W \cap L| = 1$ and $w=z$. 

    Now, to prove $(4)$, suppose $v_{i}^{k} \in W$.
    Notice that we already showed that if $i$ is not unique in $W$ then $z$ is adjacent to $v_{i}^{k}$ in $W$.
    Therefore, we may assume that $i$ is unique in $W$.
    Since $v_{i}$ is adjacent to either $x$ or $y$, without loss of generality, we may assume that $v_{i}$ adjacent to $x$, and therefore we have that $v_{i}x = e_{\ell}$ for some $\ell$.
    Hence, as observed before, $f_{\ell} = w v_{i}^{k'}$ for some $w \in L$ and $k' \in [m]$.
    Since there is only one vertex in $L$, namely $z$, and $i$ is unique in $W$ we have that $w = z$ and $k' = k$ hence $f_{\ell} = z v_{i}^{k}$ is an edge in $W$.

    Next, to show $(5)$, note that since $z$ is the only vertex in $W \cap L$, it must be incident with $f_{m}$ and $d_{H}(x) - 1 + d_{H}(y) - 1$ edges of other colours.
    Hence, $d_{W}(z) = d_{H}(x) + d_{H}(y) - 1$.

    Finally, if $(6)$ does not hold then both $i$ and $j$ are not unique in $W$, which contradicts $(2)$.
\end{proof}

\begin{proposition} \label{prop:edge_not_in_triangle}
    Suppose $H$ has an edge $e$ which is in a cycle but not in a triangle then there is no rainbow copy of $H$ in $\left( G, c \right) = \left( G_{n, H, e}, c_{n, H, e} \right)$.
\end{proposition}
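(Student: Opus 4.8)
The plan is to argue by contradiction: suppose $(G,c)$ contains a rainbow copy $W$ of $H$ and invoke Lemma~\ref{lemma:each_vertex_appears} throughout. Fix an isomorphism $\phi\colon H\to W$, let $z=\phi(w)$ be the unique vertex of $W$ lying in $L$ (by \ref{lemma:unique_downstairs}), and let $\nu\in[p-2]$ be the unique index that is not unique in $W$ (by \ref{lemma:only_one_duplicate}), with its two copies $v_\nu^{a},v_\nu^{b}$, $a\neq b$.

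First I would pin down $\nu$ using the degree identity $d_W(z)=d_H(x)+d_H(y)-1$ from \ref{lemma:degree}. Since $e$ lies in no triangle, $N_H(x)\cap N_H(y)=\emptyset$, so there are exactly $d_H(x)+d_H(y)-2$ vertices in $N_H(x)\cup N_H(y)$. By \ref{lemma:all_adjacent_to_z} every copy in $W$ of such a vertex is adjacent to $z$, and these are the only neighbours of $z$ (as $z\in L$); a unique vertex contributes one such neighbour and a doubled one contributes two. Comparing with $d_W(z)$ forces $v_\nu\in N_H(x)\cup N_H(y)$, say $v_\nu\sim x$, and shows that both $v_\nu^{a}$ and $v_\nu^{b}$ are neighbours of $z$. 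Both of the edges $zv_\nu^{a},zv_\nu^{b}$ are copies of $e_s:=xv_\nu$, so rainbowness forces their colours to differ; hence one of them is the diagonal recoloured to $m$ --- this is the colour-$m$ edge $f_m=zv_\nu^{a}$ (say) --- and the other, $zv_\nu^{b}$, carries colour $s$.

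The crux of the argument is a connectivity observation. Because $v_\nu^{a}\in H_a$ and $v_\nu^{b}\in H_b$ with $a\neq b$, and distinct copies $H_i$ are pairwise non-adjacent inside $K$, the vertices $v_\nu^{a}$ and $v_\nu^{b}$ lie in different components of $W-z=W\cap K$. Thus $z$ is a cut vertex of $W$, so $w$ is a cut vertex of $H$ that separates $a_0:=\phi^{-1}(v_\nu^{a})$ from $b_0:=\phi^{-1}(v_\nu^{b})$; note that $a_0,b_0$ are both neighbours of $w$. On the other hand, $\phi(xy)$ is an edge of $W$ inside $K$, hence internal to a single copy $H_k$, so $\phi(x),\phi(y)\in H_k$ lie in a common component of $W-z$; that is, $x$ and $y$ lie on the \emph{same} side of the cut vertex $w$.

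Finally I would feed in the hypothesis that $e=xy$ lies on a cycle $C$ (necessarily of length at least $4$, since $e$ is in no triangle). The image $\phi(C)$ is a $2$-connected subgraph of $W$, so it lives inside a single block; I would trace $C$ through $W$, using that following an internal edge keeps one inside a fixed copy $H_i$ and that one can only switch copies by passing through $z$, together with the fact that every index other than $\nu$ is unique and, by \ref{lemma:edges_are_unique}, each internal edge of $H$ has a single copy in $W$. This pins down the copy-indices around the cycle and shows the closed walk cannot be completed consistently with the configuration at $z$: the two edges $zv_\nu^{a}$ (colour $m$) and $zv_\nu^{b}$ (colour $s$) sit in different blocks of $W$, whereas the cycle --- which contains $\phi(xy)$ with $x,y$ on one side of $w$ --- is confined to one block. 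The main obstacle is exactly this last step: making the tracing rigorous requires splitting into the cases $w\in C$ and $w\notin C$ and, in each, tracking both the forced copy-indices and the two distinguished colours $m$ and $s$ around $C$; reconciling the $2$-connectivity of $C$ with the separation forced by the doubled vertex is where the contradiction is obtained, and is the delicate part of the proof.
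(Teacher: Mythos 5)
Your preparatory steps are sound: the degree count via Lemma~\ref{lemma:degree} together with Lemma~\ref{lemma:all_adjacent_to_z} does force the duplicated index $\nu$ to be a neighbour of $x$ or $y$ with both copies $v_\nu^{a},v_\nu^{b}$ adjacent to $z$, and the observation that $z$ is a cut vertex of $W$ separating them is correct (and already disposes of the case where $H$ is $2$-connected). But the proof stops exactly where the real work begins, and the contradiction you gesture at does not exist as stated: nothing forces the cycle $\phi(C)$ to contain either of the two distinguished edges $zv_\nu^{a}$, $zv_\nu^{b}$. If $w\notin C$ then $\phi(C)$ avoids $z$ entirely and lies inside a single copy $H_i$, which merely exhibits a $|C|$-cycle of $H$ avoiding $\{x,y\}$ --- perfectly consistent with everything you have derived (consider $H$ equal to two $4$-cycles sharing a vertex, with $e$ in one of them). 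If $w\in C$, tracing $\phi(C)$ through $z$ produces two edges $zv_j^{i},zv_{j'}^{i}$ into one copy; pulling back, either $v_j$ and $v_{j'}$ are adjacent to the same vertex of $\{x,y\}$, which yields a $|C|$-cycle of $H$ avoiding $e$, or to different ones, which yields a $(|C|+1)$-cycle of $H$ through $e$ --- again no contradiction, just a longer cycle. So the ``reconciliation'' you defer cannot be carried out locally for one arbitrary cycle through $e$.

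The missing idea, which is how the paper closes the argument, is global and extremal: take $g$ to be the length of a \emph{longest} cycle of $H$ through $e$, and map every $g$-cycle $C$ of $W$ to a $g$-cycle $K_C$ of $H$. The maximality of $g$ is precisely what kills the bad case above (two attachment vertices adjacent to different elements of $\{x,y\}$ would give a $(g+1)$-cycle through $e$). This map is injective by Lemma~\ref{lemma:edges_are_unique}, and its image contains no $g$-cycle through $e$, so $W$ has strictly fewer $g$-cycles than $H$ --- that is the contradiction. To salvage your block-structure setup you would still need a counting step of this kind; the configuration at $z$ alone does not suffice.
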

\begin{proof}
    Suppose for contradiction that $W$ is a rainbow copy of $H$ in $\left( G, c \right)$.
    Let $g$ be the length of a longest cycle in $H$ which uses $e$.
    We shall show that there is a natural correspondence between the $g$-cycles in $W$ and the $g$-cycles in $H$ not using the edge $e$, thus yielding a contradiction, since the number of $g$-cycles in $W$ is then strictly smaller than the number of $g$-cycles in $H$.

    Let $C$ be a $g$-cycle in $W$.
    We shall find a corresponding $g$-cycle $K_{C}$ in $H$.
    If $C$ does not use vertices from $L$, i.e., $C = v_{k_{1}}^{i}\dots v_{k_{g}}^{i}v_{k_{1}}^{i}$, with $k_{1}, \dots, k_{g} \le p - 2$, then let $K_{C} =  v_{k_{1}} \dots v_{k_{g}}v_{k_{1}}$.
    Note that by construction $K_{C}$ is a $g$-cycle in $H$.
    
    Otherwise, by $(3)$ in Lemma~\ref{lemma:each_vertex_appears}, $C$ uses exactly one vertex from $L$, i.e., $C = uv_{k_{1}}^{i} \dots v_{k_{g-1}}^{i}u$ with $u \in L$ and $k_{1}, \dots, k_{g-1} \le p-2$.
    In that case let $K_{C} = w v_{k_{1}} \dots v_{k_{g-1}}w$, where $w = x$ if $v_{k_{1}}$ is a neighbour of $x$ in $H$, or $w = y$ otherwise.

    We claim that $K_{C}$ is a $g$-cycle in $H$.
    Indeed, observe first that by construction $v_{k_{1}} \dots v_{k_{g-1}}$ is a path in $H$.
    Note also that $v_{k_{1}}$ and $v_{k_{g-1}}$ both have exactly one neighbour in $\left\{ x, y \right\}$. 
    Therefore, if $v_{k_{1}}$ and $v_{k_{g-1}}$ are both adjacent to the same vertex $w \in \left\{ x, y \right\}$ then $K_{C}$ is indeed a $g$-cycle. 
    We can therefore assume, without loss of generality, that $k_{1}$ is adjacent to $x$ and $k_{g-1}$ is adjacent to $y$.
    We note that $k_{1}, \dots, k_{g_{1}}, x, y$ is then a $(g+1)$-cycle in $H$ using the edge $e = xy$, which contradicts the assumption that $g$ is the size of a longest cycle in $H$ using the edge $e$.
    
    It is easy to check now that if $K_{C} = K_{C'}$, for some $g$-cycle $C'$ (different from $C$) in $W$, then we obtain a contradiction to $(6)$ of Lemma~\ref{lemma:each_vertex_appears}.
    Finally, there is no $g$-cycle $C$ in $W$ such that $K_{C}$ is a $g$-cycle in $H$ using the edge $e$, thus we obtain a contradiction.
\end{proof}

Recall that that an edge is a \emph{bridge} if its removal increases the number of connected components.

\begin{proposition} \label{prop:cut_edge}
    If $H$ has a non-pendant bridge then there is an edge $e \in H$ such that there is no rainbow copy of $H$ in $\left( G_{n, H, e}, c_{n, H, e} \right)$.
\end{proposition}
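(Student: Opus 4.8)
The plan is to take $e = xy$ to be a non-pendant bridge of $H$ (if there are several I will choose one extremally, see below). Since a bridge lies in no cycle it lies in no triangle, so the construction $\left(G,c\right) = \left( G_{n, H, e}, c_{n, H, e} \right)$ is defined, and the Proposition reduces to showing that $(G,c)$ is $\rainbow{H}$-free. Write $H - e = H_x \sqcup H_y$ for the two components obtained by deleting the bridge, with $x \in H_x$ and $y \in H_y$. Since $e$ is non-pendant, $d_H(x), d_H(y) \ge 2$, and each of $H_x, H_y$ has at least two vertices. Crucially, $H \setminus \{x,y\}$ is exactly $(H_x - x) \sqcup (H_y - y)$, so each copy $H_i$ in the construction is a disjoint union of copies of the components of $H_x - x$ and of $H_y - y$, and in particular no connected subgraph of $G[K]$ meets both an $(H_x-x)$-part and an $(H_y-y)$-part of the same copy.

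Suppose for contradiction that $W$ is a rainbow copy of $H$ in $(G,c)$, with isomorphism $\psi \colon H \to W$. By Lemma~\ref{lemma:unique_downstairs} there is a unique $z \in W \cap L$, and by Lemma~\ref{lemma:degree} we have $d_W(z) = d_H(x) + d_H(y) - 1$. Put $w = \psi^{-1}(z)$, so $d_H(w) = d_H(x) + d_H(y) - 1$. As $d_H(x), d_H(y) \ge 2$, we get $d_H(w) > \max\{d_H(x), d_H(y)\}$, whence $w \neq x, y$. Therefore the image $\psi(e) = \psi(x)\psi(y)$ of the bridge has neither endpoint equal to $z$, so both endpoints lie in $K$; and since isomorphisms preserve bridges, $\psi(e)$ is a bridge of $W$ whose removal splits $W$ into $\psi(H_x)$ and $\psi(H_y)$.

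The key structural step is to locate the side of $\psi(e)$ avoiding $z$. Assume $w \in H_x$ (the case $w \in H_y$ is symmetric, since the construction is invariant under swapping $x$ and $y$); then $z = \psi(w) \in \psi(H_x)$, so $\psi(H_y)$ is a connected subgraph of $W$ omitting $z$. Because $G[K]$ is a disjoint union of the copies $H_i$ and every $K$–$L$ edge is incident with the unique vertex $z$, any connected subgraph of $W$ omitting $z$ lies inside a single connected component of $G[K]$, that is, inside a copy $P$ of a single connected component of $H_x - x$ or of $H_y - y$. Then $\psi(H_y) \subseteq P$, and $\psi(x) \in P$ as well (it is an endpoint of the edge $\psi(e)$, which lies in $P$). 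A component of $H_y - y$ has fewer than $|H_y|$ vertices and so cannot contain $\psi(H_y)$; hence $P$ is a copy of a component $C$ of $H_x - x$, and we obtain a subgraph embedding of $H_y$ — indeed of $H_y$ together with the pendant edge $\psi(y)\psi(x)$ — into the proper subgraph $C \subsetneq H_x$.

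To finish I would derive a contradiction from this self-embedding by counting vertex-types. Every vertex of $K$ carries a type in $V(H)\setminus\{x,y\}$; by Lemma~\ref{lemma:only_one_duplicate} each type occurs in $W$ exactly once, save for a single repeated type; and by Lemma~\ref{lemma:all_adjacent_to_z} the neighbourhood of $z$ consists of copies of \emph{every} vertex of $N_H(x)\cup N_H(y)$, thereby consuming all the boundary types (each once, the repeated one twice). On the other hand $\psi(H_y)\cup\{\psi(x)\}$ sits inside $P$ and hence consumes $|H_y|+1$ \emph{distinct} types of $H_x - x$, at most one of which may be a boundary type (otherwise that type would be used at least twice without being the unique repeated type). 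The hard part will be to show these demands are incompatible for a suitable $e$: one needs enough non-boundary types of $H_x$ inside a single component $C$ to host the embedded copy, which fails once the two sides are comparable in size. I would therefore choose the non-pendant bridge minimizing $\max\{|H_x|,|H_y|\}$, so that $H_y$ is too large to embed into a component of $H_x - x$ using essentially non-boundary types; and in the residual case where no bridge is nearly balanced I expect one endpoint to have large degree, so that $d_H(w)=d_H(x)+d_H(y)-1$ exceeds $\Delta(H)$ and is already unrealizable. Reconciling these two regimes into a single clean argument is the main obstacle.
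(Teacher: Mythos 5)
Your structural analysis is sound as far as it goes: choosing $e$ to be a non-pendant bridge is legitimate (a bridge lies in no triangle, so $G_{n,H,e}$ is defined), the deduction that $z=\psi(w)$ with $d_H(w)=d_H(x)+d_H(y)-1>\max\{d_H(x),d_H(y)\}$, hence $w\neq x,y$, is correct, and so is the conclusion that the side of the bridge $\psi(e)$ avoiding $z$ must sit inside a single component of some $H_i$, yielding an embedding of $H_y$ plus a pendant edge into a component of $H_x-x$. But this is where the proof stops being a proof: that embedding is not by itself contradictory, and you say so explicitly. For instance, if $H$ consists of a $K_6$ and a $K_3$ joined by a bridge $xy$ with $x$ in the $K_6$, then $H_y$ plus a pendant edge is a $K_3$ with a pendant vertex, which embeds comfortably into the component $K_5$ of $H_x-x$; here there is only one non-pendant bridge, so no extremal choice of $e$ is available, and your fallback ($d_H(x)+d_H(y)-1>\Delta(H)$) also need not hold once some vertex of $H_x$ has large degree. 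The type-counting completion you sketch is exactly the missing content, and "reconciling these two regimes" is not a technicality — it is the theorem. As written, the proposal establishes a necessary condition on a hypothetical rainbow copy but never refutes it.

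For comparison, the paper closes the argument by a different count. It first reduces (via Proposition~\ref{prop:edge_not_in_triangle}) to the case where every edge of $H$ not in a triangle is a bridge, then picks the non-pendant bridge $e=xy$ with $d(x)\ge d(y)$ maximizing $d(x)$. The degree identity $d_W(z)=d(x)+d(y)-1\ge d(x)+1$ is used only to show that no non-pendant bridge of $W$ is incident with $z$; every non-pendant bridge of $W$ therefore lies in $K$, and Lemmas~\ref{lemma:all_edges_appear}, \ref{lemma:only_one_duplicate}, \ref{lemma:all_adjacent_to_z} and \ref{lemma:edges_are_unique} force each one to project injectively to a non-pendant bridge of $H$ other than $e$. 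Counting non-pendant bridges then gives the contradiction uniformly, with no case split on the relative sizes or degrees of the two sides. If you want to salvage your approach, the lesson from the paper is to count a bridge-type invariant rather than vertex types: the former interacts cleanly with Lemma~\ref{lemma:edges_are_unique}, whereas the vertex-type budget you set up does not obviously exclude the embedding you constructed.
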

\begin{proof}
    If there is an edge $e'$ in $H$ which is in a cycle but not in a triangle then the result follows from Proposition~\ref{prop:edge_not_in_triangle}, by taking $\left( G_{n, H, e'}, c_{n, H, e'} \right)$. 
   Hence, we may assume that every edge in $H$ which is not in a triangle is a bridge.
    Let $e = xy$, with $d(x) \ge d(y)$, be a non-pendant bridge in $H$ for which $d(x)$ is maximized.
    By the assumption $e$ is well defined.

    Suppose for contradiction that $W$ is a rainbow copy of $H$ in $(G, c)$.
    We will show that the number of non-pendant bridges in $W$ is strictly smaller than the number of non-pendant bridges in $H$, thus obtaining a contradiction.

    Observe first that we cannot have a non-pendant bridge in $W$ incident with any vertex $z \in L$ as then, by $(5)$ of Lemma~\ref{lemma:each_vertex_appears}, we have $d(z) \ge d(x) + d(y) - 1 \ge  d(x) + 1$ which would contradict the maximality of $d(x)$.
    Therefore, if there is a non-pendant bridge in $W$ then it must be within $K$.

    Let $b = v_{i}^{k}v_{j}^{k}$ be a non-pendant bridge in $W$, for some $i, j, k$.
    We shall show that $e_{b} = v_{i}v_{j}$ must be a non-pendant bridge in $H$.
    By assumption every edge in $H$ which is not in a triangle is a bridge hence $v_{i}v_{j}$ is contained in a triangle, say in $v_{i}v_{j}v_{\ell}$ for some $\ell \in [p]\setminus\{i,j\}$.

    Observe that if $v_{i}v_{j}x$ or $v_{i}v_{j}y$ is a triangle in $H$ then, by $(4)$ of Lemma~\ref{lemma:each_vertex_appears}, $v_{i}^{k}v_{j}^{k}z$ is a triangle in $W$; this contradicts the assumption that $v_{i}^{k}v_{j}^{k}$ is a bridge.
    Therefore we can assume that $v_{i}v_{j}v_{\ell}$ is a triangle with $\ell \le p-2$.
    Since $v_{i}^{k}v_{j}^{k}$ is a bridge in $W$ it follows that the edge cannot belong to any triangle in $W$.
    Therefore either $v_{i}^{k}v_{\ell}^{k}$ or $v_{j}^{k}v_{\ell}^{k}$ is not an edge in $W$.
    Without loss of generality we can assume that $v_{i}^{k}v_{\ell}^{k}$ is not an edge in $W$.
    Hence we must have by $(1)$ of Lemma~\ref{lemma:each_vertex_appears} that, for some $k' \neq k$, $v_{i}^{k'}v_{\ell}^{k'}$ is an edge in $W$.
    By the same lemma, there also must exist $k''$ such that $v_{j}^{k''}v_{\ell}^{k''}$ is an edge in $W$.
    But then there are two indices $i$ and $\ell$ which are not unique in $W$ contradicting $(2)$ of Lemma~\ref{lemma:each_vertex_appears}.
    Therefore, we have that $e_{b} = v_{i}v_{j}$ is indeed a bridge in $H$. 

    Note that by $(6)$ of Lemma~\ref{lemma:each_vertex_appears} we have that $e_{b} \neq e_{b'}$ for distinct non-pendant bridges $b, b'$ in $W$.
    Hence we found a correspondence between the non-pendant bridges in $W$ and the non-pendant bridges in $H \setminus \left\{ e \right\}$, which gives a contradiction as then the number of non-pendant bridges in $W$ is strictly smaller than the number of non-pendant bridges in $H$.
\end{proof}

\begin{theorem} \label{thm:trees_2_conn}
    If $H$ has a non-pendant edge not contained in a triangle then for any integers $t \ge e(H)$ and $n$ we have
    \[
        \sat{t}{n}{H} \le c_{H} \cdot n,
    \]
    where $c_{H} = e(H) \cdot (|H|-2)$.
\end{theorem}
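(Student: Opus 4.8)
The plan is to combine the two structural propositions just proved with a short greedy completion argument, keeping careful track of the edge count. First I would reduce to the two propositions by a case analysis on the non-pendant edge that lies in no triangle. By hypothesis $H$ has a non-pendant edge $e^{*}$ lying in no triangle. If $e^{*}$ lies in some cycle of $H$, that cycle has length at least four, so Proposition~\ref{prop:edge_not_in_triangle} applied with $e = e^{*}$ yields a construction $(G, c) = (G_{n,H,e}, c_{n,H,e})$ containing no rainbow copy of $H$. Otherwise $e^{*}$ lies in no cycle at all, i.e. it is a non-pendant bridge, and Proposition~\ref{prop:cut_edge} then supplies an edge $e$ for which $(G_{n,H,e}, c_{n,H,e})$ is rainbow-$H$-free. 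In either case the chosen $e$ lies in no triangle (a bridge lies in no cycle), so the construction is well defined and, by the Proposition preceding Lemma~\ref{lemma:each_vertex_appears}, every non-edge inside the independent set $L$ is $\rainbow{H}$-saturated. Thus, for every $n \ge |H|\cdot e(H)$, we obtain a rainbow-$H$-free coloured graph $(G,c)$ in which $L$ is independent and all non-edges of $L$ are already saturated.

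Next I would pass from $(G,c)$ to an honestly saturated graph without inflating the edge count. Since $(G,c)$ is rainbow-$H$-free, greedily add non-edges, each in some colour from $[t]$, as long as no rainbow copy of $H$ is created; this terminates in a maximal rainbow-$H$-free coloured graph $(G', c')$, which is by definition $(\rainbow{H}, t)$-saturated. The key point is that $L$ stays independent throughout: a non-edge inside $L$ is $\rainbow{H}$-saturated in $(G,c)$, and this property is monotone under edge addition, because the rainbow copy it creates already lives in $(G,c)$ and hence in any supergraph with a consistent colouring. Therefore such a non-edge is never added, so every edge of $G'$ is incident to $K$. Since $|K| = e(H)\cdot(|H|-2)$, this gives
\[
    e(G') \le \sum_{v \in K} \deg_{G'}(v) \le |K|\,(n-1) < e(H)\,(|H|-2)\,n = c_{H}\, n ,
\]
and hence $\sat{t}{n}{H} \le c_{H}\, n$ in this range of $n$.

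For the remaining small values $n < |H|\cdot e(H)$, I would instead greedily complete the empty graph to a maximal rainbow-$H$-free, hence saturated, graph; it has at most $\binom{n}{2}$ edges, and a direct check (using $|H| \ge 4$, which is forced by the hypothesis since no connected graph on three vertices has a non-pendant edge outside a triangle) shows $\binom{n}{2} \le c_{H}\, n$ throughout this range. I expect the only genuine subtlety to be the bookkeeping guaranteeing that $L$ remains independent under greedy completion, so that the trivial per-vertex degree bound over $K$ controls the total; the conceptually hard work of showing the base construction is rainbow-$H$-free has already been done in Propositions~\ref{prop:edge_not_in_triangle} and~\ref{prop:cut_edge}. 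One should also note that although the base colouring uses only $[m]$, the completed graph $(G',c')$ is saturated against the full palette $[t]$, since its maximality is taken with respect to all $t \ge e(H)$ colours.
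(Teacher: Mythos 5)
Your proposal is correct and follows essentially the same route as the paper: reduce to Propositions~\ref{prop:edge_not_in_triangle} and~\ref{prop:cut_edge} via the cycle/bridge dichotomy, complete the construction to a saturated graph, and bound the edges by noting every edge meets $K$ with $|K| = e(H)(|H|-2)$. The paper handles small $n$ with a monochromatic $K_n$ rather than a greedy completion, and bounds $e(G')$ by $|L||K| + \binom{|K|}{2} \le n|K|$ rather than a degree sum over $K$, but these are cosmetic differences.
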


\begin{proof}
    When $n \le e(H) \cdot (|H|-2)$, the result follows easily by considering a monochromatic $K_{n}$.
    We may then assume that $n > e(H) \cdot (|H|-2)$.
    Consider an edge in $H$ as in the statement of Proposition~\ref{prop:edge_not_in_triangle}~or~\ref{prop:cut_edge}.
    Then there is no rainbow copy of $H$ in $\left( G=G_{n, H, e}, c_{n, H, e} \right)$ and every non-edge in $L$ is $\rainbow{H}$-saturated.
    If there are non-edges in $G$ which are not $\rainbow{H}$-saturated for some colour $i$, we can simply add those edges to $G$ and colour them with an appropriate colour, obtaining $\left( G', c' \right)$.
    Note that $e(G') \le |L||K| + \binom{|K|}{2} \le (n-|K|)|K| + |K|^{2} = n|K| \le n \cdot e(H) \cdot (|H|-2)$.
\end{proof}

\subsection{Graphs with a cycle}\label{subsec:cycle}

The construction presented in this subsection will be very similar to the one in Subsection~\ref{subsec:trees_2_conn}.
Let $H$ be a graph on $p$ vertices with a cycle.
Observe that if $H$ is triangle-free then there is an edge in $H$ which is in a cycle but not in a triangle hence by a result from previous section we have that $\sat{t}{n}{H} = O(n)$.
Therefore, we can assume that $H$ has a triangle.
Let $e = xy$ be an edge of $H$ which is contained in a triangle. 

As before, for $n$ large enough we shall construct a graph $G = G^{r}_{n,H, e}$ on $n$ vertices together with an edge colouring ${c = c^{r}_{n, H, e} : E(G) \rightarrow [t]}$ such that the vast majority of the non-edges of $(G, c)$ is $\rainbow{H}$-saturated and $(G, c)$ is $\rainbow{H}$-free.

Let $\left\{ e_{1}, \dots , e_{m} = e \right\}$ and $\left\{ v_{1}, \dots, v_{p-1} = x, v_{p} = y \right\}$ be enumerations of the edges and vertices of $H$, respectively.
For all $i \in [m]$ and $j \in [h]$, where $h = \lceil \log(n^{2}m+1)\rceil$, let $H_{i, j}$ be a copy of $H \setminus \left\{ x, y \right\}$ with the vertex set $V_{i, j} = \left\{ v_{1}^{i, j}, \dots, v_{p-2}^{i, j} \right\}$, where $v_{l}^{i,j}$ in $H_{i,j}$ corresponds to $v_{l}$ in $H$.

Now we define a graph $G = K \cup L$, where $G[K] = \bigcup_{i, j}H_{i,j}$ is a disjoint union of $H_{i, j}$'s and $L$ is an independent set of size $n - |K|$.
Moreover, for every $u \in L$ and $H_{i, j}$ we shall toss a coin and based on the result decide how to join the vertices in $H_{i, j}$ with $u$.
More precisely, for $u \in L$, $i \in [m]$ and $j \in [h]$, let $X_{u, i, j}$ be a random variable such that $\mathbb{P}\{X_{u, i, j} = x\} = \mathbb{P}\{X_{u, i, j} = y\} = \frac{1}{2}$, and let all the $X_{u, i, j}$'s be independent.
Now join $u$ with $v_{k}^{i, j} \in H_{i, j}$ if and only if $v_{k}X_{u,i,j} \in E(H)$.

Having defined $G$ let us define the edge colouring $c$.
Let $w_{1}w_{2}$ be an edge in $G$.
Since there are no edges in $L$ we can assume that $w_{1} = v_{k}^{i, j}$, for some $i, j, k$.
Consider two cases depending on $w_{2}$:
\begin{enumerate}
    \item if $w_{2} \in K$ and $w_{2} = v_{k'}^{i}$ for some $k'$, then let $s$ be such that $e_{s} = v_{k}v_{k'}$;
    \item if $w_{2} \in L$ then let $s$ be such that $e_{s} = v_{k}X_{w_{2}, i, j}$.
\end{enumerate}
Now $c(w_{1}w_{2}) = s$ if $s \neq i$ and $c(w_{1}w_{2}) = m$ otherwise.

\begin{proposition} \label{prop:cycles_saturated}
    With positive probability every non-edge in $L$ is $\rainbow{H}$-saturated.
\end{proposition}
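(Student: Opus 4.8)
The plan is to fix a non-edge $w_1w_2$ in $L$ and a colour $c\in[t]$, and to produce a rainbow $H$ in the natural way: use one of the copies $H_{i,j}$ together with $w_1,w_2$, letting the added edge $w_1w_2$ play the role of $e=xy$ while $w_1,w_2$ play the roles of $x,y$. The first step is to check that $\{w_1,w_2\}\cup V_{i,j}$ is a copy of $H$ precisely when $X_{w_1,i,j}\neq X_{w_2,i,j}$, i.e. when one of $w_1,w_2$ was attached to $H_{i,j}$ as $x$ and the other as $y$. Indeed, if say $X_{w_1,i,j}=x$ and $X_{w_2,i,j}=y$, then by construction $w_1$ is adjacent to exactly the vertices $v_k^{i,j}$ with $v_k\in N_H(x)$ and $w_2$ to exactly those with $v_k\in N_H(y)$; together with the internal edges of $H_{i,j}$ (a copy of $H\setminus\{x,y\}$) and the new edge $w_1w_2$ playing $e=xy$, the graph on $\{w_1,w_2\}\cup V_{i,j}$ is then isomorphic to $H$.

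Next I would carry out the colour bookkeeping to decide which copy serves which colour. Inside $H_{i,j}$ every edge whose template colour is $i$ is instead coloured $m$, so the edges of the candidate copy realise the colours $[m]\setminus\{i\}$ when $i\le m-1$ (the recoloured edge playing $e_i$ supplies colour $m$), and the colours $[m-1]$ when $i=m$. Adding $w_1w_2$ in colour $c$ and reading off the colours present shows that the copy is rainbow exactly when $c=i$ (for $c\le m-1$), when $i=m$ (for $c=m$), and for \emph{any} choice of $i$ when $c>m$. Hence it suffices, in order to saturate $w_1w_2$ in every colour, that for each $i\in[m]$ there is some $j\in[h]$ with $X_{w_1,i,j}\neq X_{w_2,i,j}$: taking $i=c$ (and $i=m$ when $c=m$) handles all $c\in[m]$, and any such $i$ handles every $c>m$ as well.

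The final step is a union bound. For a fixed triple $(w_1,w_2,i)$ the events $\{X_{w_1,i,j}=X_{w_2,i,j}\}$ are independent over $j$ and each has probability $\tfrac12$, so the probability that no $j\in[h]$ works equals $2^{-h}$. Since there are at most $\binom{|L|}{2}<\tfrac{n^2}{2}$ non-edges and $m$ choices of $i$, the probability that some non-edge of $L$ fails to be $\rainbow{H}$-saturated is at most $\tfrac{n^2m}{2}\cdot 2^{-h}$. Because $h=\ceil{\log(n^2m+1)}$ gives $2^{h}\ge n^2m+1$, this is strictly less than $\tfrac12$, and so with probability greater than $\tfrac12$ every non-edge in $L$ is $\rainbow{H}$-saturated, as claimed.

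The only place that needs genuine care is the colour bookkeeping in the second step: one must verify that the $i\mapsto m$ colour swap inside each copy produces a rainbow copy for exactly the intended colour, and in particular that colours $c>m$ lying outside the template palette $[m]$ are also covered (which they are, by any copy). Once this reduction to the events $\{X_{w_1,i,j}\neq X_{w_2,i,j}\}$ is established, the probabilistic estimate is immediate from the choice of $h$, so I expect no further obstacle.
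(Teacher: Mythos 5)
Your proof is correct and follows essentially the same route as the paper's: both reduce saturation of a non-edge $uv$ in colour $c$ to the event that $X_{u,i,j}\neq X_{v,i,j}$ for some $j$, and then apply a first-moment/union bound over the at most $n^2m$ pairs $(uv,i)$ using $2^h\ge n^2m+1$. The only difference is that you spell out the colour bookkeeping (which copy $H_{i,j}$ serves which colour $c$, including $c>m$) that the paper leaves as "it is easy to check," and your accounting there is accurate.
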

\begin{proof}
    Let $f = uv$ be a non-edge in $L$ and $i \in [m]$ some colour.
    Notice, that if $f$ is $i$-coloured and there is some $j$ for which $X_{u, i, j} \neq X_{v, i, j}$, then we can find a rainbow copy of $H$ in $\left\{ u, v, H_{i, j} \right\}$.
    Call the pair $(uv, i)$ \emph{bad} if $X_{u, i, j} = X_{v, i, j}$ for every $j \in [h]$.
    The probability that $(uv, i)$ is bad is equal to
    \[
        \mathbb{P}\{X_{u, i, j} = X_{v, i, j} \text{ for every } j\} = 2^{-h}.
    \]
    Since we have $\binom{|L|}{2} \le n^{2}$ non-edges in  $L$ and $m$ colours the expected number of bad pairs is
    \[
        \mathbb{E}\left[\#\text{bad pairs}\right] \le 2^{-h} n^{2} m \le \frac{n^{2}m}{n^{2}m + 1} < 1.
    \]
    Therefore with positive probability there is no bad pair, hence with positive probability every non-edge in $L$ is $\rainbow{H}$-saturated.
\end{proof}

\begin{proposition} \label{prop:cycles_no_rainbow}
    There is no rainbow copy of $H$ in $\left( G, c \right)$.
\end{proposition}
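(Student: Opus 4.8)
The plan is to mirror the strategy of Proposition~\ref{prop:edge_not_in_triangle}: assume a rainbow copy $W$ of $H$ exists in $(G,c)=(G^{r}_{n,H,e},c^{r}_{n,H,e})$ and derive a contradiction by comparing an isomorphism-invariant count on $W$ with the same count on $H$. Since $W\cong H$ as graphs, any such count must agree; because $e=xy$ lies in a triangle of $H$, I will count \emph{triangles} and produce an injection from the triangles of $W$ into the triangles of $H$ that do not use the edge $e$. As $e$ lies in at least one triangle of $H$, the latter set is strictly smaller than the set of all triangles of $H$, and this yields the contradiction.

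First I would record that the analogue of Lemma~\ref{lemma:each_vertex_appears} holds for the present construction. The colouring again has the property that for each $k\in[m-1]$ the only $k$-coloured edges of $(G,c)$ are copies of $e_k$, while the only $m$-coloured edge inside a block $H_{i,j}$ is the copy of $e_i$; the random choices $X_{u,i,j}$ affect only \emph{which} vertices of $L$ are joined to $K$, not this colour structure. Hence the proof of Lemma~\ref{lemma:each_vertex_appears} goes through essentially verbatim, and I will only need the two consequences that $W$ meets $L$ in exactly one vertex $z$ (as in Lemma~\ref{lemma:unique_downstairs}) and that exactly one type $i\in[p-2]$ is not unique in $W$ (as in Lemma~\ref{lemma:only_one_duplicate}). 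I also use the structural facts, built into the construction, that distinct blocks $H_{i,j}$ are vertex-disjoint, that $G[K]$ has no edges between different blocks, and that $L$ is independent.

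The injection is defined as follows. Every triangle $C$ of $W$ contains at most one vertex of $L$, since $L$ is independent. If $C\subseteq K$, then, as $G[K]$ has no edges between blocks, all three vertices of $C$ lie in a common block, say $C=v_a^{i,j}v_b^{i,j}v_c^{i,j}$, and I set $K_C=v_av_bv_c$, a triangle of $H$ avoiding $\{x,y\}$. Otherwise $C=z\,v_a^{i,j}v_b^{i,j}$ with $z\in L$ and $v_a^{i,j}v_b^{i,j}$ an edge of a single block; writing $w=X_{z,i,j}\in\{x,y\}$, the existence of the edges $zv_a^{i,j}$ and $zv_b^{i,j}$ forces $v_a,v_b\in N_H(w)$, so $K_C=w\,v_av_b$ is a triangle of $H$ using exactly one of $x,y$. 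In either case $K_C$ contains at most one of $x$ and $y$, hence does not use the edge $e=xy$.

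It remains to check injectivity, and here the single-duplicate property is the crux. If $K_C=K_{C'}$ then, since the two cases produce triangles containing zero or one vertex of $\{x,y\}$ respectively, $C$ and $C'$ are of the same type. In the first case both are copies of the same internal triangle $v_av_bv_c$; were they in different blocks, all three types $a,b,c$ would be non-unique in $W$, contradicting Lemma~\ref{lemma:only_one_duplicate}, so $C=C'$. In the second case both equal $z\,v_a^{(\cdot)}v_b^{(\cdot)}$ with the same $w$ and the same pair $\{a,b\}$; in different blocks both $a$ and $b$ would be non-unique, again contradicting Lemma~\ref{lemma:only_one_duplicate}. Thus the map is injective, so the number of triangles of $W$ is at most the number of triangles of $H$ not using $e$, which is strictly smaller than the total number of triangles of $H$; this contradicts $W\cong H$. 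The main obstacle I anticipate is confirming that the structural lemma truly survives the randomisation, i.e.\ that replacing the deterministic attachment of $L$ to $K$ used in Subsection~\ref{subsec:trees_2_conn} by the coin tosses $X_{u,i,j}$ leaves the colour-based counting behind Lemma~\ref{lemma:each_vertex_appears} intact; once that is secured, the triangle-counting injection is a direct isomorphism-invariant analogue of the $g$-cycle argument of Proposition~\ref{prop:edge_not_in_triangle}, with the hypothesis that $e$ lies in a triangle used exactly once.
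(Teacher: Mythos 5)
Your proposal is correct and follows the paper's overall strategy exactly: the same map sending each triangle $C$ of $W$ to a triangle $K_C$ of $H$ avoiding the edge $e=xy$, and the same final count. The one place where you genuinely diverge is the injectivity step. The paper dispatches it with a short, self-contained colour argument: if $K_T=K_{T'}$ for distinct triangles $T\neq T'$, then the (at least five) edges of $T\cup T'$ all receive colours from the four-element set consisting of $m$ together with the three colours indexing the edges of $K_T$ in the enumeration of $E(H)$, so some colour repeats and $W$ is not rainbow. You instead first re-establish the structural Lemma~\ref{lemma:each_vertex_appears} for the randomised construction and then invoke Lemma~\ref{lemma:only_one_duplicate} and Lemma~\ref{lemma:unique_downstairs} to rule out two triangles mapping to the same $K_C$ from different blocks. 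Your route works --- you correctly identify that the only thing to verify is that the colour-based bookkeeping behind Lemma~\ref{lemma:each_vertex_appears} is unaffected by the coin tosses $X_{u,i,j}$ (it is: the tosses only decide which of $x,y$ a vertex of $L$ imitates, and for parts $(i)$--$(iii)$ the analysis of where the colour-$m$ edge sits goes through as before) --- but it buys the conclusion at the cost of re-proving a lemma that the paper deliberately avoids here; the pigeonhole on colours is shorter, needs no structural information about $W$ beyond its rainbowness, and is the reason the paper can state Proposition~\ref{prop:cycles_no_rainbow} independently of the machinery of the previous subsection. If you keep your version, you should write out the transported proof of Lemma~\ref{lemma:each_vertex_appears}$(i)$--$(iii)$ rather than assert it goes through verbatim, since the well-definedness of the colouring in the earlier construction explicitly used that $e$ is in no triangle, which is exactly the hypothesis that fails here.
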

\begin{proof}
    Suppose $W$ is a rainbow copy of $H$ in $\left( G, c \right)$.
    We shall show that there is a natural correspondence between the triangles in $W$ and the triangles in $H$ not using the edge $xy$, thus obtaining a contradiction, since the number of triangles in $W$ is then strictly smaller than the number of triangles in $H$.

    Let $T$ be a triangle in $W$.
    We shall find a corresponding triangle $K_{T}$ in $H$.
    If $T$ does not uses vertices from $L$, i.e., $T = \left\{ v_{k_{1}}^{i, j}, v_{k_{2}}^{i, j}, v_{k_{3}}^{i, j} \right\}$, with $k_{1}, k_{2}, k_{3} \le p - 2$, then let $K_{T} = \left\{ v_{k_{1}}, v_{k_{2}}, v_{k_{3}} \right\}$.
    Note that by construction $K_{T}$ is a triangle in $H$.
    
    Otherwise, since $L$ is independent, $T$ uses exactly one vertex from $L$, i.e., $T = \left\{ v_{k_{1}}^{i, j}, v_{k_{2}}^{i, j}, u \right\}$ with $u \in L$ and $k_{1}, k_{2} \le p-2$.
    In that case let $K_{T} = \left\{ v_{k_{1}}, v_{k_{2}}, X_{u, i, j} \right\}$ .
    Again, by construction $K_{T}$ is a triangle in $H$.
    
    It is easy to check now that if $K_{T} = K_{T'}$ for some distinct triangles $T$ and $T'$ in $W$ then at least one colour appears twice in $E(T) \cup E(T')$, which is a contradiction.
    Finally, there is no triangle $T$ in $W$ such that $K_{T}$ is a triangle in $H$ using the edge $xy$.
    This proves that there is no rainbow copy of $H$ in $G$.
\end{proof}

Using those two propositions we are ready to prove the main theorem of this subsection.
\begin{theorem}\label{thm:cycles}
    If $H$ contains a cycle then 
    \[
        \sat{t}{n}{H} \le (1+o_{H}(1))c_{H}\cdot n\log n,
    \]
    where $c_{H} = 2e(H)(|H|-2)$.
\end{theorem}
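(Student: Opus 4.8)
The plan is to assemble Propositions~\ref{prop:cycles_saturated} and~\ref{prop:cycles_no_rainbow} into a genuinely $\rainbow{H}$-saturated graph and then bound its number of edges. First I would dispose of the triangle-free case: if $H$ is triangle-free then it has an edge lying on a cycle but not on a triangle, so Theorem~\ref{thm:trees_2_conn} already yields $\sat{t}{n}{H} = O(n)$, which is far stronger than the claimed bound. Hence I may assume that $H$ contains a triangle and, since the statement is asymptotic, that $n$ is large (for bounded $n$ a monochromatic clique is vacuously $\rainbow{H}$-saturated). I then fix an edge $e = xy$ lying in a triangle of $H$ and build the random construction $(G, c) = (G^{r}_{n, H, e}, c^{r}_{n, H, e})$ described above.

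Next I would combine the two propositions. By Proposition~\ref{prop:cycles_no_rainbow}, $(G, c)$ is $\rainbow{H}$-free for \emph{every} outcome of the coin tosses, while Proposition~\ref{prop:cycles_saturated} guarantees that with positive probability every non-edge contained in $L$ is $\rainbow{H}$-saturated; I fix such an outcome. I would also record that a colour $i > m$ is handled exactly as in Proposition~\ref{prop:cycles_saturated}: choosing any $i' \in [m]$ and, using that $(uv, i')$ is not bad, a coordinate $j$ with $X_{u, i', j} \neq X_{v, i', j}$, the edge $uv$ in colour $i$ together with $H_{i', j}$ spans a copy of $H$ whose colours are the $m-1$ colours on $H_{i', j}$ plus the genuinely new colour $i$, hence a rainbow copy. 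Thus the fixed $(G,c)$ is $\rainbow{H}$-free and all non-edges inside $L$ are already saturated, for every colour of the palette.

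It then remains to saturate the non-edges meeting $K$, and here I would invoke the standard completion argument: repeatedly add to $(G, c)$ any non-edge, in any colour, whose addition does \emph{not} create a rainbow copy of $H$, until no further such addition is possible. The resulting $(G', c')$ is $\rainbow{H}$-saturated and, by construction, stays $\rainbow{H}$-free. The crucial point is that no edge inside $L$ is ever inserted: every non-edge of $L$ is saturated and remains so as edges are only added, so its addition in any colour would create a rainbow $H$ — which the completion never does. Consequently every edge of $G'$ meets $K$, and therefore
\[
    e(G') \le \binom{|K|}{2} + |K|\,|L| \le \binom{|K|}{2} + |K|\,n .
\]

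Finally I would estimate $|K|$. By construction $|K| = m\,h\,(p-2) = e(H)(|H|-2)\ceil{\log(n^{2}m+1)}$, and since $\ceil{\log(n^{2}m+1)} = (2 + o_{H}(1))\log n$ this gives $|K| = (1 + o_{H}(1))\,2e(H)(|H|-2)\log n = (1 + o_{H}(1)) c_{H}\log n = o(n)$. As $|K| = o(n)$, the term $\binom{|K|}{2}$ is negligible and $e(G') \le (1 + o_{H}(1)) c_{H}\, n\log n$, as required. The step I expect to be the main obstacle is this completion stage: one must argue simultaneously that the $K$-incident non-edges can be saturated without ever producing a rainbow $H$, and that doing so never forces the insertion of an $L$-internal edge. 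It is precisely the prior saturation of $L$ (together with the monotonicity of saturation under adding edges) that guarantees the latter, keeping the edge count at $|K|\cdot n$ rather than letting it blow up to $\Theta(n^{2})$.
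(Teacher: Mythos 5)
Your proof is correct and follows essentially the same route as the paper: reduce to the case where $H$ has a triangle via Theorem~\ref{thm:trees_2_conn}, fix a good outcome of the random construction using Propositions~\ref{prop:cycles_saturated} and~\ref{prop:cycles_no_rainbow}, complete to a saturated graph by adding only $K$-incident edges, and bound $e(G')$ by $n|K|$. You are in fact slightly more careful than the paper on two points it glosses over — handling colours $i>m$ from the palette $[t]$, and arguing that the completion step never inserts an edge inside $L$ — and your count $|K|=e(H)(|H|-2)h$ is the one consistent with the stated constant $c_H$.
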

\begin{proof}
    By Theorem~\ref{thm:trees_2_conn} from the previous subsection we may assume that $H$ contains a triangle.
    Let $e$ be an edge in $H$ contained in a triangle.
    For $n$ large enough, it follows from Propositions~\ref{prop:cycles_saturated}~and~\ref{prop:cycles_no_rainbow}, that there is $\left( G, c \right)$, with vertex partition $K \cup L$ (where $|K| = e(H) \cdot |H| \cdot h$), such that every non-edge in $L$ is $\rainbow{H}$-saturated and there is no rainbow copy of $H$ in $\left( G, c \right)$.
    If there are any non-edges which are not $\rainbow{H}$-saturated we can just add those edges with appropriate colours to $G$ obtaining $\left( G', c' \right)$.
    Therefore $\left( G', c' \right)$ is $\rainbow{H}$-saturated and the number of edges in $G'$ is at most $(n-|K|) \cdot |K| + |K|^{2} = n \cdot |K| = (1+o_{H}(1))c_{H}\cdot n\log n$, where $c_{H} = 2e(H)(|H|-2)$.
\end{proof}

Theorem~\ref{thm:generalupperbound} restricted to the class of connected graphs follows easily as a corollary of the previous theorem and Theorem~\ref{thm:trees_2_conn}. 
\begin{corollary}\label{corol:generalconnected}
Let $H$ be a connected graph on at least three vertices which is not a star.
Then, for every $t\ge e(H)$, 
\[
    \sat{t}{n}{H}=O(n\log n).
\] 
\end{corollary}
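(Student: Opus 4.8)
The plan is to derive Corollary~\ref{corol:generalconnected} directly from the two constructions established earlier in this section, namely Theorem~\ref{thm:trees_2_conn} and Theorem~\ref{thm:cycles}, by a simple case analysis on the structure of the connected graph $H$. The key observation is that a connected graph on at least three vertices which is not a star must contain either a cycle or a non-pendant edge; these two possibilities are precisely what the two theorems handle. So the entire content of the corollary is to check that these cases are exhaustive and that each yields the desired $O(n\log n)$ bound.

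First I would dispose of the case where $H$ contains a cycle. Here Theorem~\ref{thm:cycles} applies immediately and gives $\sat{t}{n}{H} \le (1+o_H(1))c_H \cdot n\log n = O(n\log n)$ for every $t \ge e(H)$, which is exactly what we want (indeed, the $O(n\log n)$ bound absorbs the multiplicative constant $c_H$ and the $o_H(1)$ factor). Thus I may assume for the rest of the argument that $H$ is acyclic, i.e., $H$ is a tree.

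Next I would argue that a tree $H$ on at least three vertices that is not a star must contain a non-pendant edge which is not contained in any triangle. Since $H$ is a tree, it has no triangles at all, so \emph{every} edge of $H$ fails to lie in a triangle; it therefore suffices to produce a single non-pendant edge. A tree is a star precisely when it has a vertex adjacent to all others, equivalently when its diameter is at most $2$. As $H$ is a tree on at least three vertices that is not a star, its diameter is at least $3$, so it contains a path $v_0 v_1 v_2 v_3$ on four vertices; the middle edge $v_1 v_2$ then has both endpoints of degree at least $2$ in $H$, hence is non-pendant and (being in a tree) lies in no triangle. Theorem~\ref{thm:trees_2_conn} now applies and gives $\sat{t}{n}{H} \le c_H \cdot n = O(n)$, which is certainly $O(n\log n)$.

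Combining the two cases, I conclude that $\sat{t}{n}{H} = O(n\log n)$ for every connected non-star $H$ on at least three vertices and every $t \ge e(H)$, as claimed. I do not anticipate any genuine obstacle here: the proof is purely a bookkeeping assembly of the preceding results, and the only point requiring a line of justification is the structural claim that a non-star tree has a non-pendant (equivalently, a non-triangle, non-pendant) edge, which follows from the diameter argument above. The work has all been front-loaded into the two constructions of Subsections~\ref{subsec:trees_2_conn} and~\ref{subsec:cycle}.
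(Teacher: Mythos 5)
Your proposal is correct and follows exactly the same two-case decomposition as the paper's own proof: apply Theorem~\ref{thm:cycles} if $H$ has a cycle, and otherwise observe that a non-star tree has a non-pendant edge (vacuously not in a triangle) and apply Theorem~\ref{thm:trees_2_conn}. The only difference is that you spell out the diameter argument for the structural claim, which the paper leaves implicit.
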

\begin{proof}
If $H$ contains a cycle then we are done by Theorem~\ref{thm:cycles}. If not, then $H$ is a tree containing a non-pendant edge and the result follows from Theorem~\ref{thm:trees_2_conn}. 
\end{proof}

\subsection{Graphs with leaves} \label{subsec:leaves}

In this subsection we are concerned with connected graphs which contain a leaf.
In \cite{sat}, Barrus, Ferrara, Vandenbussche, and Wenger showed that, with few exceptions, if a connected graph $H$ has a leaf, then for $t \ge \binom{|H|-1}{2}$, $\sat{t}{n}{H} = \Theta(n)$.
\begin{theorem}[Barrus, Ferrara, Vandenbussche, and Wenger \cite{sat}] \label{thm:bfvw}
    Let $H$ be a graph on at least five vertices with a leaf whose neighbour is not a conical vertex and such that the rest of the vertices do not induce a clique.
    Then, for any $t \ge \binom{|H|-1}{2}$, we have ${\sat{t}{n}{H} = \Theta(n)}$.
\end{theorem}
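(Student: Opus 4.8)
The lower bound needs no new idea: $H$ is connected on at least five vertices, so Lemma~\ref{lem:lower_bound_linear} gives $\sat{}{n}{H}\ge\frac{n-1}{2}$, and since $\sat{t}{n}{H}$ is non-increasing in $t$ we get $\sat{t}{n}{H}\ge\sat{}{n}{H}=\Omega(n)$. All the content is the matching upper bound $\sat{t}{n}{H}=O(n)$, and because $\sat{t}{n}{H}$ decreases in $t$ it is enough to handle the smallest admissible value $t=\binom{|H|-1}{2}$; larger $t$ only helps.

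For the construction, write $p=|H|$, let $\ell$ be the prescribed leaf, $u$ its neighbour, $A=V(H)\setminus\{\ell\}$ and $H'=H[A]=H-\ell$. The hypotheses say precisely that $H'$ is not complete and that $\deg_{H'}(u)=\deg_H(u)-1\le p-3$ (as $u$ is not conical). I would take a \emph{core} clique $K$ on $\{b_1,\dots,b_{p-1}\}$, identifying $b_i$ with the $i$-th vertex of $A$ and $b_1$ with $u$, and colour its $\binom{p-1}{2}$ edges with the $\binom{p-1}{2}=t$ distinct colours $c_{ij}:=c(b_ib_j)$. I then add an independent set $L$ of size $n-(p-1)$ and join every $w\in L$ to each $b_i$ with $i\ge 2$, giving the edge $wb_i$ the colour $c_{1i}$ of $ub_i$. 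With this colouring each set $\{w,b_2,\dots,b_{p-1}\}$ carries a faithful rainbow copy of $K$ in which $w$ plays the role of $u=b_1$. The total number of edges is $\binom{p-1}{2}+(p-2)(n-p+1)=O(n)$, as required.

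The saturation argument is where both hypotheses are spent. Essentially every non-edge lies inside $L$ (the few remaining ones can be added with any legal colour). Given $v,w\in L$ and a colour $\sigma$, the plan is to exhibit a rainbow $H$ with $v$ in the role of $\ell$, the new edge $vw$ as the pendant edge $\ell u$, $w$ in the role of $u$, and the rest of $H'$ taken from the rainbow clique on $\{w,b_2,\dots,b_{p-1}\}$. Since that clique is a rainbow $K_{p-1}$ and $H'$ has $p-1$ vertices, I must embed $H'$ onto all of it with $u\mapsto w$ and palette avoiding $\sigma$, i.e.\ map some non-adjacent pair of $H'$ onto the endpoints of the $\sigma$-coloured edge. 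When $\sigma$ is a core--core colour this uses that $H'$ has a non-edge; when $\sigma$ sits on one of $w$'s own edges it uses that $u$ has a non-neighbour in $H'$. The delicate point is the case where the only non-edges of $H'$ meet $u$ (so $A\setminus\{u\}$ induces a clique): here a single embedding into $\{w,b_2,\dots,b_{p-1}\}$ need not avoid a prescribed core--core colour, and one must either enlarge the core with a second, colour-disjoint gadget or re-route the copy through $b_1$; this is the first genuine obstacle.

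The second, and main, obstacle is proving that $(G,c)$ is $\mathfrak{R}(H)$-free. Any copy of $H$ needs $p$ vertices while the core has only $p-1$, so it must use at least one $w\in L$; as $L$ is independent, the used $L$-vertices form an independent set of $H$ and are joined only to core vertices. The crucial structural fact is that $wb_i$ duplicates the core colour $c_{1i}=c(b_1b_i)$, so a rainbow subgraph can never use $wb_i$ and $b_1b_i$ simultaneously. I would combine this colour duplication with the degree bound $\deg_{H'}(u)\le p-3$ and the non-completeness of $H'$ to rule out every placement of the $p$-vertex graph $H$ across $L$ and the core: ``$u$ not conical'' caps the degree available at the image of $u$, while ``$A$ not a clique'' forbids completing a rainbow $K_{p-1}$ on the would-be non-leaf vertices. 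Turning this case check into a clean contradiction, uniformly over how $H$ distributes its vertices between $L$ and the core, is the heart of the argument.
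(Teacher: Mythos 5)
There is nothing in the paper to compare against here: Theorem~\ref{thm:bfvw} is quoted from Barrus, Ferrara, Vandenbussche and Wenger~\cite{sat} and the paper gives no proof of it, so your argument has to stand entirely on its own. The lower bound and the monotonicity reduction to $t=\binom{|H|-1}{2}$ are fine. The fatal problem is the construction: the graph $(G,c)$ you build is \emph{not} $\rainbow{H}$-free --- it always contains a rainbow copy of $H$, and the hypothesis ``$u$ is not conical'' is precisely what guarantees this. Since $\deg_{H-\ell}(u)=\deg_H(u)-1\le p-3$, the vertex $u$ has a non-neighbour $z$ in $H-\ell$. Embed $H-\ell$ onto the core by any bijection sending $z\mapsto b_1$ and $u\mapsto b_2$; this is a rainbow copy of $H-\ell$ (it lives inside the rainbow clique) and it does not use the colour $c_{12}$, because $b_1b_2$ is the image of the non-edge $zu$. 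Now any $w\in L$ is joined to $b_2$ by an edge of colour exactly $c_{12}$, so appending $w$ as a leaf at $b_2$ produces a rainbow copy of $H$ already present in $G$. Concretely, for $H=P_5$ the path $b_1b_2b_3b_4w$ carries the four distinct colours $c_{12},c_{23},c_{34},c_{14}$. The defect is structural: making each $w\in L$ a colour-clone of $b_1$ only forbids a rainbow subgraph from using $wb_i$ and $b_1b_i$ \emph{simultaneously}; it does nothing to stop $w$ and $b_1$ from coexisting in a copy of $H$, and since $|H|=p$ exceeds the core size $p-1$ by exactly one, every copy wants to do exactly that. You correctly flag $\rainbow{H}$-freeness as ``the heart of the argument'', but no case analysis will close it --- the claim is false for this construction. (Compare the gadgets that do work in this paper, e.g.\ $G_{n,H,e}$ in Subsection~\ref{subsec:trees_2_conn} or the two-clique construction in Theorem~\ref{thm:Hkl}: there the gadgets are built from $H$ with \emph{two} vertices deleted, and several colour-shifted copies are used so that no single gadget plus one outside vertex can complete a copy of $H$.)

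A secondary point: you read ``the rest of the vertices do not induce a clique'' as ``$H-\ell$ is not complete'', but the paper applies the theorem (in the proof of Theorem~\ref{thm:largenumbercolours}) exactly when $H\setminus\{x,y\}$ is not a clique, i.e.\ the intended hypothesis is that $H-\ell-u$ is not complete. Under that reading the ``first genuine obstacle'' in your saturation step disappears: for a core--core colour $c_{ij}$ with $i,j\ge 2$ you can always place a non-adjacent pair of $H-\ell-u$ on $\{b_i,b_j\}$ while sending $u$ to $w$, and for a colour $c_{1i}$ you use a non-neighbour of $u$ as before. So the saturation half of your argument is essentially sound once the hypothesis is read correctly; the unfixable gap is the freeness claim above, and any repair must change the construction itself rather than the verification.
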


To prove similar bounds for the remaining connected graphs containing a leaf we shall introduce some terminology. 
We let $H_{k, \ell}$ be the graph obtained by taking a $K_k$ (for some $k\geq 3$) and adding two new vertices $x$ and $y$, where $x$ adjacent to some $\ell$ vertices of the clique and $yx$ is a pendant edge. 
We shall call $x$ the \emph{middle} vertex and $y$ the \emph{leaf} vertex.
Note that all such graphs are isomorphic however we choose the $\ell$ neighbours of $x$ in $K_k$.
Also, observe that the graph $K_k$ with a \emph{rotated} edge is just $H_{k-1,k-2}$. 

The following proposition shows that for any $\ell \le k -2$, the $t$-rainbow saturation number of $H_{k, \ell}$ is linear in $n$ when the number of colours is sufficiently large.
\begin{theorem}\label{thm:Hkl}
    For any $2 \le \ell \le k-2$ and $t \ge k(k-1)$ we have that $\sat{t}{n}{H_{k, \ell}} = O(n)$.
\end{theorem}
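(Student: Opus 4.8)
The plan is to mimic the ``core plus independent set'' construction of Subsection~\ref{subsec:trees_2_conn}, taking the pendant edge $e = xy$ as the distinguished edge and deleting its two endpoints, so that $H_{k,\ell}\setminus\{x,y\}$ is exactly the clique $K_k$. I would let the core $K$ be a disjoint union of a \emph{bounded} number of colour-copies of $K_k$, take $L$ to be an independent set of size $n-|K|$, and join every $u\in L$ to the $\ell$ vertices of each clique-copy that play the role of the neighbours of the middle vertex $x$. With this incidence pattern an $L$-vertex sees $\ell$ vertices of a clique-copy and can therefore serve as the middle $x$, while any second $L$-vertex can serve as the leaf $y$. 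Exactly as in Proposition~\ref{prop:cycles_saturated}, checking that every non-edge $uv$ of $L$ is $\rainbow{H_{k,\ell}}$-saturated should then be routine: given a target colour $c_0$, one lets one endpoint be the middle attached to a clique-copy chosen so that its colours avoid $c_0$, the other endpoint the leaf, and the new edge the pendant coloured $c_0$.

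The genuinely new difficulty, and the reason this case is separated from Theorem~\ref{thm:trees_2_conn}, is $\rainbow{H_{k,\ell}}$-freeness. In the earlier constructions the distinguished edge lay in a cycle but in no triangle (or was a non-pendant bridge), and one ruled out rainbow copies by a \emph{structural} count of triangles, cycles, or bridges (Propositions~\ref{prop:edge_not_in_triangle}, \ref{prop:cut_edge}, \ref{prop:cycles_no_rainbow}). Here that is impossible. Because $\ell\le k-2$, each $L$-vertex sees at most $k-2$ vertices of any one copy and no two copies are joined, so no $L$-vertex lies in a $K_k$ and every $K_k$ of $G$ is a single clique-copy. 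But then a clique-copy $H_i$, an $L$-vertex $v$ as middle, and a vertex of a \emph{different} clique-copy $H_j$ adjacent to $v$ as leaf form an honest subgraph isomorphic to $H_{k,\ell}$. Such parasitic copies are genuine copies of the graph, so no counting invariant can exclude them; they must be destroyed by forcing a repeated colour inside each of them.

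This is exactly where the hypothesis $t\ge k(k-1)=2\binom{k}{2}$ enters: it provides two disjoint palettes of size $\binom{k}{2}$, enough to colour two families of clique-copies with colour-disjoint cliques. The idea I would pursue is to colour every middle-edge incident to a fixed $L$-vertex from a small, \emph{position-consistent} palette, so that extending a would-be middle by a leaf lying in another copy it is joined to always repeats one of these middle-edge colours, while colouring the cliques so that, over the copies a single $L$-vertex is joined to, their colour sets have empty common intersection — two colour-disjoint cliques already suffice, which is precisely what doubling the number of colours buys. The delicate heart of the argument, and the step I expect to be the main obstacle, is to reconcile these two demands: a colour spent on middle-edges (which is what kills a parasitic copy) is exactly a colour that can no longer serve as the fresh pendant colour needed to saturate a non-edge, so one must distribute the two palettes across the clique-colourings and the middle-edge-colourings so that \emph{every} colour is simultaneously blocked as a parasitic pendant and available as a saturating pendant on some copy. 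I would set up the incidences so as to trade these two roles between the two palettes, and then verify the two properties by splitting on which palette $c_0$ lies in.

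Finally, since the number of clique-copies and the size of each are bounded in terms of $k$, the core $K$ has constant size, every $L$-vertex is incident to $O(1)$ edges, and hence $e(G)=O(n)$. As in the proof of Theorem~\ref{thm:trees_2_conn}, any remaining non-edges meeting $K$ that are not yet saturated may be added greedily with colours chosen so as not to create a rainbow copy; since all such edges meet the constant-size set $K$, this adds at most $|K|\cdot n=O(n)$ edges and leaves the non-edges inside $L$ untouched, yielding an $\rainbow{H_{k,\ell}}$-saturated graph on $n$ vertices with $O(n)$ edges.
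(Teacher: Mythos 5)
Your overall architecture matches the paper's: a constant-size core of vertex-disjoint $k$-cliques coloured from two disjoint palettes $A,B$ of size $\binom{k}{2}$ (which is exactly what the hypothesis $t\ge k(k-1)$ buys), an independent set $L$ attached to designated vertices of each clique, and a ``crossed'' assignment of palettes so that the pendant edge of any would-be rainbow copy repeats a colour already present. You also correctly isolate the central difficulty: the parasitic copies with middle vertex in $L$ and leaf in a different clique-copy, and the tension between a colour being blocked as a parasitic pendant and remaining available as a saturating pendant. But the proof is not complete: the colouring that is supposed to reconcile these two demands is never actually exhibited, and the one concrete design decision you do commit to breaks saturation. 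You join each $u\in L$ to exactly $\ell$ vertices of each clique-copy, so when $u$ plays the middle, all $\ell$ of its attachment edges are forced into the copy with no slack. For the added edge $uv$ in colour $c_0$ to complete a rainbow $H_{k,\ell}$, you need a clique-copy $C$ such that $c_0$ avoids both the $\binom{k}{2}$ colours of $C$ and all $\ell$ attachment colours from $u$ to $C$. With one clique per palette, every $c_0$ lies in the colour set of one clique and you are forced onto the other; if $c_0$ is then one of the $\ell$ attachment colours to that clique, no rainbow copy is created. Repairing this with several same-palette cliques carrying different attachment palettes reintroduces rainbow parasitic copies between two such cliques, so the fix is not free.

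The paper's resolution is to attach each $u\in L$ to $\ell+1$ designated vertices of each clique, colouring these $\ell+1$ edges with distinct colours from a fixed $(\ell+1)$-set $B'\subseteq B$ (for the $A$-coloured clique) and $A'\subseteq A$ (for the $B$-coloured clique): at most one attachment edge can clash with $c_0$ and is simply discarded, restoring saturation, while freeness survives because any pendant edge leaving a middle vertex is coloured from the same palette as the clique it would extend, or from an attachment set whose colours are already exhausted. Be aware that this fix creates a case your $\ell$-attachment version correctly avoids and which you would then have to handle: when $\ell=k-2$, an $L$-vertex together with $k-1$ clique vertices forms a $K_k$ inside $G$, and ruling out rainbow copies of $H_{k,\ell}$ built on such cliques requires a separate argument, which the paper supplies.
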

\begin{proof}
    Let $G = K \cup L$ where $G[K]$ is a disjoint union of two cliques of size $k$, say $C_{1}$, $C_{2}$, and $L$ is independent set on $n - 2k$ vertices.
    Now, fix $\ell + 1$ vertices $C_{1}$ and $\ell + 1$ vertices of $C_{2}$ and join each vertex in $L$ to all of those vertices.

    Let $A, B \subseteq [k(k-1)]$, with $|A| = |B| = \frac{k(k-1)}{2}$ be a disjoint union of colours and $A' \subseteq A$, $B' \subseteq B$ be any subsets of size $\ell+1$.
    We shall describe the colouring of the edges of $G$.
    First, colour the edges of $C_{1}$ using distinct colours from $A$, and colour the edges of $C_{2}$ using distinct colours from $B$.
    Now, for every vertex $v\in L$ colour the edges incident with $v$ with distinct colours from $B'$ if the edges are incident with $C_{1}$ and distinct colours from $A'$ if the edges are incident with $C_{2}$.
    Note that in this colouring each vertex in $L$ is incident with $2(\ell+1)$ edge of different colours.

    We claim that there is no rainbow copy of $H_{k, \ell}$ in $G$.
    Suppose for contradiction that $W$ is a rainbow copy of $H_{k, \ell}$ in $G$.
    First let us find a copy of $k$-clique $C$ in $W$.
    Up to symmetry there are two cases: either $C$ uses all the vertices from $C_{1}$ or it uses $k-1$ vertices from $C_{1}$ and one vertex from $L$.
    In the former case the middle vertex must be in $L$ and the leaf vertex must be in $C_{2}$.
    Which is a contradiction since $C$ uses all colours of $A$ and the edge between the middle and leaf vertices uses a colour from $A' \subset A$, therefore $W$ is not rainbow.
    In the other case, when $C$ uses a vertex from $L$, say $z$, note that $\ell = k-2$ and therefore the edges between $z$ and the rest of the clique $C$ use all of the colours from $B'$.
    Observe now that the middle vertex cannot be in $C_{2}$ since it has to be adjacent to at least two vertices of the clique $C$ (we assumed that $\ell \ge 2$).
    Also, the middle vertex cannot be in $L$ since it has to be adjacent to at least one vertex from $C_{1} \cap C$, hence must be incident with an edge of colour from $B'$ but all the colours of $B'$ have already been used by the edges incident with $z$.
    Therefore, the middle vertex $z$ must belong to $C_{1}\setminus C$ and the leaf must be in $L$.
    This is impossible as $z$ is not joined to any vertex of $L$, which is a contradiction. 

    Now we shall show that every non-edge in $L$ is $\rainbow{H}$-saturated for any colour $i \in [t]$.
    By symmetry, we can assume that $i \in B$. (If $i\notin A\cup B$ the same argument holds). 
    It is easy to check now that adding the edge $xy$, with $x,y \in L$, and colouring it with colour $i$, we create a rainbow copy of $H_{k, \ell}$ using all the vertices from $C_{1}$ and $x, y$,  where $x$ and $y$ play the roles of the middle and leaf vertices, respectively.
\end{proof}

The following theorem shows that, when $r\geq 4$ is even, the $t$-rainbow saturation of $K_{r}$ with a rotated edge is linear.

\begin{theorem}\label{thm:rotatededge}
Let $r \ge 4$ be even and $H$ be $K_{r}$ with a rotated edge.
Then, for any $t \ge \binom{r}{2}$, $\sat{t}{n}{H} = O(n)$.
\end{theorem}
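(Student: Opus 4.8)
The plan is to reduce to the tightest colour budget and then build an explicit linear-size saturated construction, with the parity of $r$ entering only in the rainbow-freeness argument.

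First I would observe that $K_r$ with a rotated edge is exactly the graph $H_{r-1,r-2}$ in the notation of Theorem~\ref{thm:Hkl}, i.e. the boundary case $\ell=k-1$ that falls just outside the range $2\le \ell\le k-2$ covered there. Since $\sat{t}{n}{H}$ is non-increasing in $t$, it suffices to treat the smallest admissible palette, $t=e(H)=\binom{r}{2}$; any larger $t$ then follows for free. In particular every rainbow copy of $H$ we ever see uses each of the $\binom{r}{2}$ colours exactly once. As in the constructions of Subsections~\ref{subsec:trees_2_conn}--\ref{subsec:leaves} and of Theorem~\ref{thm:Hkl}, I would take $G=K\cup L$ with a constant-size core $K$ (on $O(r^2)$ vertices) and an independent set $L$ of size $n-|K|$, where each vertex of $L$ is joined to a fixed, constantly many vertices of $K$; this automatically gives $e(G)=O(n)$.

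The heart of the matter is the choice of the core and its colouring, and here I would identify the palette $[\binom{r}{2}]$ with the edge set of an auxiliary $K_r$ on vertex set $[r]$, writing the colour $\{i,j\}$ for the edge $ij$. The core would consist of several \emph{rainbow} $(r-1)$-cliques, each coloured like a $K_{r-1}$ sitting on a subset of $[r]$ (so that the edge between the vertices labelled $i$ and $j$ receives colour $\{i,j\}$); such a clique then \emph{misses} precisely the $r-1$ colours of a ``star'' $\{\,\{v,j\}:j\ne v\,\}$. Each vertex of $L$ is attached to $r-2$ vertices of the appropriate cliques, with the attaching edges coloured by the missing star-colours, so that a clique together with an $L$-vertex realises a rainbow copy of $K_r$ minus one edge. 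The point is to distribute the clique labels so that, as $v$ ranges over the cliques, the missing stars cover all $\binom{r}{2}$ colours; the bookkeeping that makes each colour the unique absent colour of exactly one clique-plus-middle configuration is a balanced incidence count whose feasibility requires $\tfrac{r-2}{2}$ to be an integer, i.e. $r$ even.

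With such a $G$ in hand, the saturation step should be routine, mirroring Theorem~\ref{thm:Hkl}: given a non-edge $uv$ in $L$ and a target colour $\gamma=\{i,j\}$, I would choose a core clique whose missing star contains $\gamma$, use $u$ as the \emph{middle} vertex joined to $r-2$ of its vertices (dropping the attaching edge that would clash with $\gamma$), and use $v$ as the \emph{leaf}, colouring the new edge $uv$ with $\gamma$; this completes a rainbow $H$ since the clique plus middle already use every colour except $\gamma$. After this one shows every non-edge inside $L$ is $\rainbow{H}$-saturated, and the (constantly many per vertex) remaining non-edges can simply be added with suitable colours, increasing the edge count by only $O(n)$. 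The main obstacle is proving that $G$ itself contains \emph{no} rainbow copy of $H$. Because $H=K_r-e$ with a pendant attached at an endpoint of the missing edge $e$, and because $G$ contains many near-complete rainbow blobs, I must rule out every embedding, and the danger is exactly that a rainbow $K_r-e$ (from a core clique together with an $L$-vertex playing the role of the deleted apex) acquires a pendant whose colour happens to equal the colour $\{a,b\}$ of the deleted edge. Translating this through the identification of colours with edges of $K_r$, such a coincidence corresponds to a closed configuration in the auxiliary $K_r$, and the balanced (essentially $1$-factorization-type) colouring available only for even $r$ forces any would-be rainbow $H$ to repeat a colour. This is the step where evenness is genuinely used, and it is consistent with the fact that the odd case of $K_r$ with a rotated edge is left unresolved in the paper.
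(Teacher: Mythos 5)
Your reduction to $t=\binom{r}{2}$ and your identification of $H$ with $H_{r-1,r-2}$ are fine, but the proof has a genuine gap at exactly the step you flag as "the main obstacle": you never actually establish that your construction is $\rainbow{H}$-free, and as described it almost certainly is not. The tension is structural. For a vertex $u\in L$ to saturate \emph{every} colour $\gamma$, the cliques to which $u$ is attached must have their missing stars covering all of $E(K_r)$, i.e.\ the set of labels $v$ of those cliques must be a vertex cover of the auxiliary $K_r$, so $u$ must be attached to at least $r-1$ core cliques. But then take the clique $C_{v_1}$ together with $u$: this is a rainbow $K_r$ (or $K_r$ minus an edge) whose colours omit only some $\{v_1,j\}$, and since the labels form a vertex cover, $j$ is itself the label of another clique $C_j$ to which $u$ is attached; the attaching edges from $u$ to $C_j$ carry precisely the star colours $\{j,\ell\}$, including $\{j,v_1\}$ --- and that edge is a pendant at $u$ of exactly the missing colour, completing a rainbow $H$ inside $G$. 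Your closing appeal to "a balanced, essentially $1$-factorization-type colouring" is an assertion, not an argument, and the parity condition you cite ($\tfrac{r-2}{2}\in\mathbb{Z}$) is not where evenness actually does any work.

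The paper escapes this trap with a construction of a different shape: there is no core-plus-independent-set architecture at all. It takes disjoint copies of the Hamming graph $\Gamma$ on $[r]^{r/2}$ (vertices adjacent iff they differ in one coordinate), coloured so that \emph{every} vertex is incident with exactly one edge of each of the $\binom{r}{2}$ colours and \emph{every} maximal clique is a rainbow $K_r$. Saturation is then the "rotation" trick --- a new edge at $x$ in colour $i$ replaces the unique existing $i$-coloured edge at $x$ inside its rainbow $K_r$ --- while $\rainbow{H}$-freeness is immediate from properness: the pendant edge of any rainbow $H$ would have to repeat, at an endpoint of the deleted edge, the colour of that deleted edge, which is already present there. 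Evenness of $r$ enters through the existence of a proper $(r-1)$-edge-colouring ($1$-factorization) of $K_r$, which is what makes the strongly proper colouring of $\Gamma$ possible. If you want to salvage your approach you would need to reproduce this "each vertex sees each colour exactly once" property, at which point you have essentially rediscovered the paper's construction.
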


\begin{proof}
Assume $t=\binom{r}{2}$. We first define a graph $\Gamma$ with vertex set $[r]^\frac{r}{2}$ and an edge between each pair of vertices that differ in exactly one component. Now we will define an edge colouring of $\Gamma$.

We identify the elements of $[t]$ with the edges of $K_r$ (with vertex set $[r]$). It is well known that $K_r$ has a proper edge colouring with $r-1$ colours if $r$ is even. Fix one such colouring $c$. The edges of any given colour $i$ form a matching with $\frac{r}{2}$ edges, and every vertex is incident with exactly one edge of colour $i$. For each $i\in[r-1]$, choose an arbitrary bijection $g_i$ from $[\frac{r}{2}]$ to the set of edges of colour $i$. For each vertex $x$ of $\Gamma$, let $S(x)$ be the sum of the components of $x$ modulo $r$. We define the edge colouring of of $\Gamma$ as follows: If $x$ and $y$ are two vertices of $\Gamma$ that differ in the $k^{th}$ component, colour the edge $xy$ by $g_{c(e)}\left(k+g_{c(e)}^{-1}(e)\right)$, where $e=\left\{S(x),S(y)\right\}$. We claim that every clique in $\Gamma$ is rainbow and that every vertex is incident with exactly one edge of each colour. For the first claim, observe that the restriction of $S$ to a maximal clique is a bijection from the vertices of that clique to those of our $K_r$, and the function $e\mapsto g_{c(e)}\left(k+g_{c(e)}^{-1}(e)\right)$, where $k$ is the component on which all the elements of the clique differ, permutes the edges of $K_r$. For the second claim, let $f$ be any edge of our $K_r$ and let $i=c(f)$ be its colour. Given a vertex $x$ of $\Gamma$, let $v$ be the unique vertex of $K_r$ such that $\{v,S(x)\}$ is coloured $i$. Notice that $x$ has exactly $\frac{r}{2}$ neighbours $y$ such that $S(y)=v$, and each of these neighbours differs from $x$ in a different component, hence each edge $xy$ is a coloured with a different $i$-coloured edge of $K_r$, hence $x$ sees the colour $f$. Therefore every vertex of $\Gamma$ sees every colour. But every vertex of $\Gamma$ has degree $r\choose 2$, so it must be incident with exactly one edge of each colour.

To show that $\Gamma$ is $\mathfrak{R}(H)$-free, we first observe that every clique in $\Gamma$ is a subset of a maximal clique. Hence if there is a rainbow copy of $H$ in $\Gamma$, the "missing" edge of this copy must have the same colour as the pendant edge, contradicting the fact that the colouring of $\Gamma$ is proper.

Now, for any $n$, let $G$ be a graph on $n$ vertices consisting of the disjoint union of $\left\lfloor\frac{n}{r^{\frac{r}{2}}}\right\rfloor$ copies of $\Gamma$ and a monochromatic clique on the leftover vertices. $G$ is $\mathfrak{R}(H)$-free because each of its components is. Suppose we add to $G$ a new edge $e$ in any colour $i$. One endpoint $x$ of this new edge must be in a copy of $\Gamma$. Since $x$ is incident with an edge of colour $i$ and this edge is in a rainbow copy of $K_r$, removing this edge and adding $e$ creates a rainbow $H$. $G$ is therefore an $\mathfrak{R}(H)$-saturated graph with at most $\frac{1}{2}{r\choose 2}r^\frac{r}{2}\left\lfloor\frac{n}{r^{\frac{r}{2}}}\right\rfloor+{r^\frac{r}{2}-1\choose 2}$ edges.
\end{proof}

~\subsection{Complete graphs} \label{subsec:complete}

\begin{theorem} \label{thm:randomcomplete}
    For any $r\geq 3$ there exists a positive constant $c_{r}$ (depending only on $r$) such that the following holds.
    For any $n$ and $t = t(n) \ge \binom{r}{2}$, 
    \[
        \sat{t}{n}{K_r} \leq \max \left\{ \frac{c_{r}}{\log t} n \log n, 2(r-2)n \right\}.
    \]
\end{theorem}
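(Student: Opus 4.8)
The plan is to exploit the abundance of colours to build a construction with far fewer than $\log n$ ``levels.'' First I would dispose of small $t$: whenever $t$ is bounded by a constant depending only on $r$, the claimed bound $\frac{c_r}{\log t}n\log n$ is, up to the factor $c_r$, just $\Theta(n\log n)$, and so it follows immediately from Theorem~\ref{thm:cycles} applied to $H=K_r$ (every edge of $K_r$ lies in a triangle). Hence I may assume throughout that $t$ is large compared with $r$. The construction is then the following. Fix a parameter $M$ and set $G=K\cup L$, where $K=Q_1\cup\dots\cup Q_M$ is a disjoint union of $M$ copies of $K_{r-2}$, each $Q_j$ given a fixed rainbow colouring using a colour set $C_j$ of size $\binom{r-2}{2}$, and $L$ is an independent set of size $n-|K|$. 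Every $u\in L$ is joined to all of $K$, and for each $j$ the $r-2$ edges from $u$ to $Q_j$ are coloured with a uniformly random set $A_u^j$ of $r-2$ distinct colours from $[t]\setminus C_j$, all these choices being independent over $u$ and $j$. Thus each $u\in L$ has degree exactly $(r-2)M$.

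Freeness comes essentially for free. Since $L$ is independent, any clique of $G$ meets $L$ in at most one vertex; and since distinct gadgets $Q_j$ induce no edges between them, any clique lies inside a single $Q_j$ or inside a single $\{u\}\cup Q_j$ with $u\in L$. Hence $\omega(G)=r-1$, so $G$ contains no $K_r$ whatsoever, and in particular $(G,c)$ is $\rainbow{K_r}$-free regardless of the colouring. The point of the random colouring is entirely saturation: for a non-edge $uv$ with $u,v\in L$ and a colour $i\in[t]$, adding the $i$-coloured edge $uv$ turns each $\{u,v\}\cup Q_j$ into a $K_r$ (these are the only new copies of $K_r$, as every new clique must contain both $u$ and $v$), and such a copy is rainbow precisely when $A_u^j$, $A_v^j$, $C_j$ and $\{i\}$ are pairwise disjoint. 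As $A_u^j$ and $A_v^j$ are independent uniform $(r-2)$-subsets of a ground set of size $t-\binom{r-2}{2}$, the probability that a single gadget fails is $p=\Theta(r^2/t)$, the dominant bad event being $A_u^j\cap A_v^j\neq\varnothing$.

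Because the $M$ gadgets are independent, the probability that $uv$ in colour $i$ fails to create a rainbow $K_r$ is at most $p^M$, and a union bound over the fewer than $n^2t$ triples $(u,v,i)$ shows that the expected number of ``bad'' triples is at most $n^2tp^M$. Requiring this to be below $1$ forces $M>\frac{2\log n+\log t}{\log t-O(\log r)}$, so I can take $M=O\!\left(\frac{\log n}{\log t}\right)$, noting that $p=\Omega(r^2/t)$ forces $M\ge 2$. For such an $M$ some outcome has no bad triple, and I fix that colouring; then every non-edge inside $L$ is $\rainbow{K_r}$-saturated. The only remaining non-edges lie inside $K$ (each $u\in L$ is already adjacent to all of $K$), of which there are $O(r^2M^2)$; I add any that are not yet saturated, greedily, each in a colour creating no rainbow $K_r$, which keeps the graph $\rainbow{K_r}$-free while preserving the saturation already achieved, since adding edges only ever creates further rainbow copies.

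The edge count is then $e(G)\le (r-2)M\,|L|+\binom{r-2}{2}M+O(r^2M^2)$. With $M=O(\log n/\log t)$ this is at most $\frac{c_r}{\log t}n\log n$ for a suitable $c_r=O(r)$, while in the regime where $t$ is so large that $M=2$ already suffices, one checks directly that $(r-2)\cdot 2\cdot(n-|K|)+O(r^2)\le 2(r-2)n$, which produces the second term of the maximum. I expect the main obstacle to be precisely this bookkeeping: establishing the two-sided estimate $p=\Theta(r^2/t)$ so that the exponent $M$ genuinely collapses to $O(\log n/\log t)$, and then gluing the two regimes so that the hard floor $M\ge 2$ (forced by $p=\Omega(r^2/t)$) yields exactly the linear term $2(r-2)n$. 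The $\max$ in the statement is exactly the device that reconciles the nominal bound ``$M\approx\log n/\log t$'' with the fact that one can never use fewer than two gadgets.
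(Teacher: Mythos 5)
Your construction and proof strategy are essentially the same as the paper's: a disjoint union of $(r-2)$-cliques attached completely to an independent set, random $(r-2)$-subsets of colours on the stars, a first-moment/union-bound argument showing that $M=\Theta(\log n/\log t)$ gadgets suffice, and a floor of two gadgets producing the $2(r-2)n$ term. However, there is one concrete gap. You correctly note that $\{u,v\}\cup Q_j$ is rainbow precisely when $A_u^j$, $A_v^j$, $C_j$ and $\{i\}$ are pairwise disjoint, but you then assert that each gadget fails with probability $p=\Theta(r^2/t)$. The event $i\in C_j$ is not random: it is determined by your (unspecified) choice of the clique palettes $C_j$, and when it occurs, gadget $j$ fails with probability $1$. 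Nothing in your construction prevents all the $C_j$ from being equal, in which case no gadget can ever produce a rainbow $K_r$ whose new edge has a colour in $C_1$; the graph is then not saturated for those colours, and completing it would cost $\Theta(n^2)$ edges among $L$. Moreover, when $t$ is a constant (say $t=r^7$) and $n\to\infty$, you have $M\binom{r-2}{2}\gg t$, so you cannot simply take the $C_j$ pairwise disjoint; you would need to argue that the palettes can be spread so that every colour avoids a constant fraction of the gadgets, and restrict the product bound $p^{M}$ to those gadgets.

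This is exactly the role of the device in the paper that your write-up omits: the colours are split into two halves $A$ and $B$, half the cliques are coloured from $A$ and half from $B$, and the random star-sets $B_{x,K}$ attached to an $A$-clique are drawn from $A\setminus C_K$. Then for an added edge of colour $c\in B$, every $A$-clique automatically satisfies $c\notin C_{K'}\cup B_{x,K'}\cup B_{y,K'}$, and the only failure event is the genuinely random, gadget-independent event $B_{x,K'}\cap B_{y,K'}\neq\emptyset$, of probability $O(r^2/t)$. With that modification (or an equivalent spreading of the palettes), your argument goes through; the remaining bookkeeping — the union bound over $n^2t$ triples, $M=O(\log n/\log t)$, and the $M=2$ regime giving $2(r-2)n$ — matches the paper's.
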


\begin{proof}[\textbf{Proof of Theorem}~\ref{thm:randomcomplete}]

First, it is clear we may assume $n$ is sufficiently large, by taking $c_r$ large enough. Note that if $t \le r^{7}$, by Theorem~\ref{thm:cycles} we have 
\[
    \sat{t}{n}{K_{r}} 
    \le 2\binom{r}{2}r n \log n 
    \le \frac{r^{3} \log r^{7}}{\log t} n \log n
    = \frac{7r^{3} \log r}{\log t} n \log n,
\]
for $n$ large enough, depending only on $r$.
We may then assume that $t \ge r^{7}$.
Let $\ell$ be a positive integer (to be specified later) and $G$ be the union of $2\ell$ disjoint $(r-2)$-cliques together with an independent set $M$ of size $n-2(r-2)\ell$, where each edge with one endpoint in $M$ and the other in one of the cliques is present, and there are no edges between two distinct cliques.
Observe that $G$ does not contain any copies of $K_r$, because any such copy would need to use at least two vertices from $M$.

Let $A, B$ an equipartition of the integers $\{1,2,\ldots,t\}$ (thus, $A, B$ partition $[t]$ and $\left| |A| - |B| \right| \le 1$).
Now, we shall arbitrarily colour the edges of the first $\ell$ $(r-2)$-cliques with the colours from $A$ and the edges of the remaining $\ell$ $(r-2)$-cliques with the colours from $B$, such that in each clique no colour appears twice. 
For each $(r-2)$-clique $K$, let $C_{K}$ be the set colours used by the edges of $K$.
Moreover, for each vertex $x\in M$ and each clique $K$, we shall take a subset $B_{x, K} \subseteq A \setminus C_{K}$, if $C_{K} \subseteq A$, or $B_{x, K} \subseteq B \setminus C_{K}$ otherwise, of size $r-2$ uniformly at random (and independently for every choice of $x$ and $K$) and colour each edge from $x$ to $K$ with a different element of $B_{x, K}$.
Our aim is to prove that with positive probability the addition of any coloured edge between two vertices in $M$ will form a rainbow copy of $K_{r}$. 
To do so, let us compute the probability that some edge $e = xy$, with both endpoints in $M$, coloured $c$ creates a rainbow copy of $K_r$.
By symmetry, we can assume that $c \in B$.
Let $t' = |A| - \binom{r-2}{2}$.
Suppose $K'$ is a rainbow copy of $K_{r-2}$ such that $C_{K'} \subseteq A$.

First, we need the following easy claim.
\begin{claim} \label{claim:prob_sets_dont_intersect}
    For positive integers $s, u$ with $s \ge 2u - 1$ the following holds
    \[
        \frac{\binom{s-u}{u}}{\binom{s}{u}} \ge 1 - \frac{u^{2}}{s-u+1}.
    \]
\end{claim}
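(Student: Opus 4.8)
The plan is to rewrite the ratio as a telescoping product of linear factors and then apply an elementary product inequality. Expanding the binomial coefficients as falling factorials and cancelling the common $u!$, one obtains
\[
    \frac{\binom{s-u}{u}}{\binom{s}{u}}
    = \frac{\prod_{i=0}^{u-1}(s-u-i)}{\prod_{i=0}^{u-1}(s-i)}
    = \prod_{i=0}^{u-1}\left(1-\frac{u}{s-i}\right).
\]
This identity is the key reformulation, since it exposes the ratio as a product of terms of the form $1-a_i$ with $a_i=\frac{u}{s-i}$, which is exactly the shape to which a lower bound of Weierstrass type applies.

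The point where the hypothesis $s\ge 2u-1$ enters is in checking that every factor is nonnegative. For each index $i\in\{0,\dots,u-1\}$ we have $s-i\ge s-(u-1)=s-u+1\ge u$, using $s\ge 2u-1$ for the final inequality, so $\frac{u}{s-i}\le 1$ and hence $a_i\in[0,1]$. This is the only place the lower bound on $s$ is needed, and it is precisely what allows the product inequality to be invoked.

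With nonnegativity in hand, I would apply the Weierstrass product inequality $\prod_{i}(1-a_i)\ge 1-\sum_i a_i$ (a one-line induction on the number of factors) to get
\[
    \prod_{i=0}^{u-1}\left(1-\frac{u}{s-i}\right)
    \ge 1-\sum_{i=0}^{u-1}\frac{u}{s-i}.
\]
Finally, bounding each summand using $s-i\ge s-u+1$ gives $\sum_{i=0}^{u-1}\frac{u}{s-i}\le \frac{u^2}{s-u+1}$, which yields the claimed inequality. I do not expect any genuine obstacle here: the entire argument is the recognition of the product form together with the routine termwise estimate, and the condition $s\ge 2u-1$ is used only to guarantee that all factors lie in $[0,1]$.
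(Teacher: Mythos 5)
Your proof is correct and follows essentially the same route as the paper: both rewrite the ratio as the product $\prod_{i=0}^{u-1}\bigl(1-\frac{u}{s-i}\bigr)$ and use the hypothesis $s\ge 2u-1$ only to ensure each factor lies in $[0,1]$. The only cosmetic difference is that the paper first bounds every factor by the smallest one and then applies Bernoulli's inequality $(1-x)^{u}\ge 1-ux$, whereas you apply the Weierstrass product inequality directly and then bound the resulting sum termwise; these are equivalent elementary steps yielding the same bound.
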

\begin{proof}
    Note first, that since $s \ge 2u - 1$ we have $\frac{u}{s-u+1} \le 1$.
    Hence
    \begin{align*}
        \frac{\binom{s-u}{u}}{\binom{s}{u}} 
        &= \frac{(s-u)! (s-u)!}{s! (s-2u)!}
        = \frac{(s-2u+1)\cdot (s-2u+2) \cdots (s-u)}{(s-u+1) \cdot (s-u+2)\cdots s}  \\
        &= \frac{s-2u+1}{s-u+1} \cdot \frac{s-2u+2}{s-u+2} \cdots \frac{s-u}{s} 
        = \left(1- \frac{u}{s-u+1}\right) \left(1-\frac{u}{s-u+2}\right) \cdots \left(1 - \frac{u}{s}\right) \\
        &\ge \left(1 - \frac{u}{s-u+1}\right)^{u}
        \ge 1- \frac{u^{2}}{s-u+1},
    \end{align*}
    where the last inequality follows from Bernoulli's inequality: $(1-x)^{p} \ge 1-px$ for $p \ge 1$ and $x \in [0, 1]$.
\end{proof}

Observe that by construction $c \not\in \left( C_{K'} \cup B_{x, K'} \cup B_{y, K'} \right)$.
Hence as long as $B_{x, K'}$ and $B_{y, K'}$ are disjoint we are done, i.e., there is a rainbow copy of $K_{r}$ in $\left\{ x, y \right\} \cup K'$.
Let us bound the probability that $B_{x, K'}$ and $B_{y, K'}$ are disjoint. 
To do that, we apply Claim~\ref{claim:prob_sets_dont_intersect} with $s = t'$ and $u = r-2$:
\begin{align*}
    \mathbb{P}\left\{B_{x, K'} \cap B_{y, K'} = \emptyset\right\}
    &= \frac{\binom{t'-(r-2)}{r-2}}{\binom{t'}{r}}
    \ge 1 - \frac{(r-2)^{2}}{t' - r + 3}.
\end{align*}
Hence, since $t' \ge t/2 -1$ and $t \ge r^{7}$, we have
\[
    \mathbb{P}\{\{x,  y\} \cup K' \text{ is not rainbow } K_{r} \} = 1 - \mathbb{P}\left\{ B_{x, K'} \cap B_{y, K'} = \emptyset \right\} \le \frac{(r-2)^{2}}{t'-r+3} \le \frac{1}{\sqrt{t}}.
\]
Note, there are $\ell$ rainbow copies of $K_{r-2}$ which only use colours from $A$, so we deduce that
\[
    \mathbb{P}\{e \text{ in colour c does not create a rainbow } K_{r}\} \leq t^{-\ell/2}.
\]
Therefore, the probability that some edge with both endpoints in $M$ is \emph{bad}, i.e. the addition of $e$ in some colour does not form a rainbow copy of $K_{r}$ is at most 
\[
    e(G) \cdot t^{-\ell/2}.
\]
This holds because if we colour $e$ in some colour not appearing in the edges of the graph, then we clearly form a rainbow copy of $K_r$. Hence, taking $\ell = \max \left\{ \left\lceil \frac{10\log n}{\log (t)} \right\rceil, 1\right\}$, we get 
\[
    \mathbb{P}\{\text{ some edge is \emph{bad}}\}\leq e(G) {M \choose 2} t^{-\ell/2}\leq n^{4}{2^{-5\log n}}\leq \frac{1}{n}< 1.
\]
We have thus proved there exists a colouring of $G$ for which no edge with both endpoints in $M$ is \emph{bad}.
If there are still some unsaturated non-edges in $G$, just keep adding them with appropriate colours to $G$.
Let $N = V(G) \setminus M$.
We are done as 
\begin{align*}
    e(G) 
    &\le |N|(n-|N|) + \binom{|N|}{2}
    \leq |N|n - |N|^{2} + |N|^{2}
    \le |N|n \\
    &\leq 2\ell(r-2)n.
\end{align*}
So if $\ell = 1$ then $e(G) \le 2(r-2)n$ and if $\ell > 1$ then  $e(G) \le \frac{20(r-2)}{\log t} n \log n$.
In order for the graph to be well-defined we must take $n$ big enough (depending on $r$ only) so that $2(r-2)\ell \le n$.
\end{proof}

Observe that as long as $t(n) \ge \Omega(n)$ we have $\sat{t}{n}{K_{r}} = \Theta(n)$.
\begin{corollary} \label{cor:randomcomplete}
    For any $r \ge 3$ we have
    \[
        \sat{}{n}{K_{r}} \le 2(r-2)n.
    \]
\end{corollary}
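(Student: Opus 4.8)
The final statement to prove is Corollary~\ref{cor:randomcomplete}: for any $r \ge 3$, $\sat{}{n}{K_{r}} \le 2(r-2)n$, where the subscript is empty because the palette of colours is infinite (the natural numbers).

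The plan is to derive this directly from Theorem~\ref{thm:randomcomplete} by choosing the number of colours $t = t(n)$ to grow with $n$. Recall that the rainbow saturation number $\sat{}{n}{K_r}$ with an infinite palette is at most $\sat{t}{n}{K_r}$ for \emph{any} finite $t \ge \binom{r}{2}$: an $\rainbow{K_r}$-saturated construction that only ever uses colours from $\{1,\dots,t\}$ remains saturated when the palette is enlarged, since adding an edge in any colour outside $\{1,\dots,t\}$ trivially completes a rainbow $K_r$ (one simply takes a copy of $K_{r-1}$ using $r-1$ of the old colours together with the new edge). Thus it suffices to exhibit a single finite value of $t$ for which Theorem~\ref{thm:randomcomplete} gives the bound $2(r-2)n$.

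First I would recall the bound from Theorem~\ref{thm:randomcomplete}:
\[
    \sat{t}{n}{K_r} \le \max\left\{ \frac{c_r}{\log t}\, n \log n,\; 2(r-2)n \right\}.
\]
The key observation is that the first term in the maximum can be driven below the second by taking $t$ large enough as a function of $n$. Concretely, I would choose $t = t(n)$ to be at least linear in $n$, say $t = n$ (for $n \ge \binom{r}{2}$). Then $\log t = \log n$, so the first term becomes $\frac{c_r}{\log n}\, n \log n = c_r n$, which is a fixed linear function of $n$. In fact, pushing $t$ a little higher — for instance $t = 2^{\sqrt{\log n}}$ already makes $\frac{c_r}{\log t} n \log n = c_r n \sqrt{\log n}$, which is not good enough, so one genuinely wants $t$ at least polynomial in $n$ — taking $t = n^2$ gives $\log t = 2\log n$ and the first term equals $\frac{c_r}{2} n$. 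Either way, for any fixed $r$ and all sufficiently large $n$, choosing $t$ polynomial in $n$ forces $\frac{c_r}{\log t} n \log n \le 2(r-2)n$, so the maximum is achieved by the second term and we obtain $\sat{t}{n}{K_r} \le 2(r-2)n$. Since $\sat{}{n}{K_r} \le \sat{t}{n}{K_r}$ for this finite $t$, the corollary follows.

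The only subtlety — which I expect to be the main (though minor) obstacle — is handling small $n$ uniformly, since Theorem~\ref{thm:randomcomplete} is stated for $n$ sufficiently large depending on $r$, and we need the clean bound $2(r-2)n$ for \emph{all} $n$. For the finitely many small values of $n$ (those below the threshold of the theorem, or where $n < \binom{r}{2}$ so that a linear choice of $t$ is not yet admissible), I would simply use a monochromatic clique $K_n$ as the saturated graph: it is $\rainbow{K_r}$-free when $n$ is small, and adding any edge in a new colour creates a rainbow $K_r$ once $n \ge r-1$; for these $n$ the edge count $\binom{n}{2}$ is comfortably below $2(r-2)n$. Stitching the two regimes together — the large-$n$ regime from Theorem~\ref{thm:randomcomplete} with $t$ polynomial in $n$, and the bounded-$n$ regime by direct construction — yields the bound $\sat{}{n}{K_r} \le 2(r-2)n$ for every $n$, completing the proof.
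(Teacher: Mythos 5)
Your argument is correct in substance but reaches the bound by a genuinely different route from the paper. You treat Theorem~\ref{thm:randomcomplete} as a black box: take $t=n^{d}$ for a suitable constant $d$, so that $\frac{c_{r}}{\log t}\,n\log n=\frac{c_{r}}{d}\,n\le 2(r-2)n$, pass to the infinite palette by monotonicity in $t$, and patch the small values of $n$ with a monochromatic clique. The paper instead reopens the construction inside the proof of Theorem~\ref{thm:randomcomplete}: with an unbounded palette every edge may receive its own colour, so the random choice of the sets $B_{x,K}$ is unnecessary and one simply takes $\ell=1$, obtaining an explicit deterministic graph (an independent set together with two $(r-2)$-cliques, all edges distinctly coloured) with at most $2(r-2)n$ edges; small $n$ is handled by the same clique trick. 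Your route is legitimate but buys less, and it has two soft spots you should tighten. First, since $c_{r}$ is not explicit you must choose the exponent $d\ge c_{r}/(2(r-2))$; your examples $t=n$ and $t=n^{2}$ yield $c_{r}n$ and $c_{r}n/2$, which need not be below $2(r-2)n$, and "for all sufficiently large $n$" does not rescue this, because $\frac{c_{r}}{d}n\le 2(r-2)n$ is an inequality between constants independent of $n$. Second, your parenthetical justification of monotonicity (a "$K_{r-1}$ using $r-1$ of the old colours") is garbled; the correct argument is that saturation in colour $1$ forces a rainbow $K_{r}$ passing through the newly added edge, and recolouring that one edge with any fresh colour keeps the copy rainbow. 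Finally, it is worth confirming that the fallback clique really covers the gap: the theorem's construction with $\ell=1$ needs $n\ge 2(r-2)$, and for $n\le 2(r-2)$ one has $\binom{n}{2}\le 2(r-2)n$ because $2(r-2)\le 4r-7$, so the two regimes do meet.
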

\begin{proof}
    When $n \le 2(r-2)$ then the results follows trivially by considering a monochromatic $K_{n}$.
    We can therefore assume that $n \ge 2(r-2)$.
    Observe that when there is not a restriction on the number of colours then in our construction we can assign each edge a different colour.
    In that case we can take $\ell = 1$, which corresponds to a disjoint union of an independent set $A$ and two $(r-2)$-cliques $B$ and $C$, such that all the edges between $A$ and $B \cup C$ are present, and possibly some edges between $B$ and $C$.
    The number of edges is then at most $2(r-2)n$.
\end{proof}
We conjecture that this bound is best possible up to an additive constant.

The following construction gives a better upper bound for the rainbow saturation numbers of a triangle, at least when $t$ is not too large compared to $n$.
\begin{theorem}
\label{tri}
For any $t\geq 3$ with $t\equiv 1\text{ or }3\pmod 6$, then
\[
    \sat{t}{n}{K_3}\leq \frac{3}{\log {t\choose 2}}n\log n + 3n.
\]
In particular, $\sat{3}{n}{K_3}\leq \frac{3}{\log 3}n\log n + 3n$.
\end{theorem}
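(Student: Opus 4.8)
The plan is to exploit the hypothesis $t\equiv 1$ or $3\pmod 6$, which is precisely the Kirkman--Steiner condition guaranteeing the existence of a Steiner triple system $\mathcal S$ on the colour set $[t]$: a family of $\binom{t}{2}/3$ triples in which every pair of colours lies in exactly one triple. Write $q=\binom{t}{2}$. The construction in Theorem~\ref{thm:cycles} already yields $O(n\log n)$ for $K_3$, but it pays a factor of $e(K_3)=3$ for handling each colour separately and uses only a binary ``coin'' per coordinate, so it needs $\Theta(\log n)$ coordinates. The whole point here is that having many colours lets us replace the binary coin by a \emph{$q$-ary} one driven by $\mathcal S$, so that each coordinate is far more discriminating and only $\Theta(\log n/\log q)$ coordinates are required; this is what converts the base of the logarithm from $2$ (equivalently $t$) to $\binom{t}{2}$.

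Concretely, I would build a bipartite graph $G=M\cup K$, where $M$ is an independent set of size $n-|K|$ and $K=\bigcup_{j=1}^{h}\{w_j^1,w_j^2,w_j^3\}$ consists of $h$ ``coordinate gadgets'' of three apex vertices each, with $h\approx \ceil{\log n/\log q}$ and $K$ itself independent. For every $u\in M$ and every coordinate $j\in[h]$, independently pick a uniformly random triple $\{a,b,c\}\in\mathcal S$ together with a random assignment to the three apexes, and colour the three edges $uw_j^1,uw_j^2,uw_j^3$ by $a,b,c$. Since $G$ is bipartite it is triangle-free, hence trivially $\rainbow{K_3}$-free, which disposes of one of the two saturation requirements for free; this is the analogue of Proposition~\ref{prop:cycles_no_rainbow}, obtained here by construction rather than by a counting argument. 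The total number of edges contributed by this part is $|M|\cdot 3h\approx \frac{3}{\log q}\,n\log n$, which is where the constant $3$ and the denominator $\log\binom{t}{2}$ come from.

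For the saturation analysis (the analogue of Proposition~\ref{prop:cycles_saturated}) I would argue as follows. Given a non-edge $uv$ with $u,v\in M$ and a colour $i$, adding $uv$ in colour $i$ creates a rainbow triangle exactly when some common apex $w$ satisfies $c(uw)\neq c(vw)$ with both colours different from $i$. At a fixed coordinate $j$ the two triples assigned to $u$ and $v$ typically differ, and the Steiner property ensures that a single such mismatch can be turned into a certificate avoiding the prescribed colour $i$; one checks that the probability this fails at a given coordinate is at most $1/q$. Independence across the $h$ coordinates then gives a failure probability at most $q^{-h}$ for the pair $(uv,i)$, and a union bound over the at most $\binom{n}{2}$ non-edges and the $t$ colours makes the expected number of bad pairs less than $1$ once $h$ is chosen of the stated order. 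I would then fix a good outcome, add directly (with suitable colours) the few remaining unsaturated non-edges together with all $\binom{h}{2}=O(\log^2 n)$ non-edges inside $K$, and absorb any leftover vertices into a monochromatic clique; these contributions are lower order and account for the additive $3n$. The specialisation $t=3$ (where $\mathcal S$ is the single triple $\{1,2,3\}$ and $q=3$) then gives $\sat{3}{n}{K_3}\le \frac{3}{\log 3}n\log n+3n$.

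The main obstacle, and the step requiring the most care, is the per-coordinate estimate: one must simultaneously guarantee that no rainbow triangle is \emph{already} present and that, for \emph{every} forbidden colour $i$, a single triple-mismatch yields a rainbow completion with probability at least $1-1/q$. The first is handled cleanly by bipartiteness, but the second is exactly where the Steiner triple system is indispensable, since it is what lets an arbitrary colour $i$ be avoided while still producing two distinct colours at a shared apex; this is also what forces the base of the logarithm to be $\binom{t}{2}$ rather than $t$. Matching the exact constant $3$ (as opposed to a larger absolute constant) then comes down to bookkeeping the degree-$3$ attachment against the choice of $h$, which I expect to be routine once the per-coordinate bound is in place.
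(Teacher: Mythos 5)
Your high-level plan matches the paper's: exploit the Steiner triple system on the colour set, attach each vertex of the large independent set to $h$ gadgets of three apex vertices each, use bipartiteness to get $\rainbow{K_3}$-freeness for free, and gain a base-$\binom{t}{2}$ logarithm. However, the randomized implementation has two concrete problems that prevent it from proving the stated bound. First, the per-coordinate failure estimate of $1/q$ is wrong for your colouring scheme. If $u$ and $v$ receive the \emph{same} line $\ell=\{a,b,c\}$ at a coordinate but assignments differing by a transposition (say $u\mapsto(a,b,c)$ and $v\mapsto(b,a,c)$), then one apex is monochromatic and the other two give rainbow paths with the \emph{identical} colour pair $\{a,b\}$; adding $uv$ in colour $i\in\{a,b\}$ then fails at that coordinate. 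Under a uniformly random assignment this happens with probability $\tfrac{3}{q}\cdot\tfrac{1}{2}$ for such $i$, not $\le 1/q$. Second, and more seriously for the constant: even granting a per-coordinate failure probability of order $1/q$, your union bound over $\binom{n}{2}$ pairs and $t$ colours forces $q^{h}\gtrsim n^{2}t$, i.e.\ $h\approx 2\log n/\log q$, so the edge count becomes roughly $\tfrac{6}{\log\binom{t}{2}}n\log n$ --- a factor of $2$ worse than the theorem claims. This loss is intrinsic to the probabilistic method here and is not ``routine bookkeeping.''

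The paper sidesteps both issues by derandomizing. The vertices of the large part are labelled by \emph{distinct} $k$-tuples of flags $(\ell,p)$ (there are $\binom{t}{2}$ flags), so any two of them differ in some coordinate by fiat and $k\approx\log n/\log\binom{t}{2}$ suffices with no union bound. Moreover, the colour of the edge from a vertex with $i$-th flag $(\ell,p)$ to apex $(i,j)$ is $p\star\ell^{(j)}$, where $\star$ is the quasigroup operation of the Steiner system; for two flags on the same line, the resulting two bijections $\ell\to\ell$ differ in \emph{every} position (never by a transposition), so the three paths use the three distinct pairs from $\ell$ and one of them avoids any prescribed colour. For flags on different lines, a short case analysis shows two of the three paths are rainbow with disjoint colour sets. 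Thus every differing coordinate works deterministically for every colour, which is exactly what recovers the constant $3$. To repair your proof you would need to replace the random coin by such a deterministic, injective labelling and a structured (fixed-point-free) assignment of line points to apexes --- at which point you have essentially reconstructed the paper's argument.
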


\begin{proof}
Let $S$ be a Steiner triple system of order $t$, i.e., a set of three-element subsets of $[t]$ such that every pair of elements of $[t]$ is contained in exactly one element of $S$. We call the elements of $t$ \emph{points} and the elements of $S$ \emph{lines}. It can be shown (see, e.g., Kirkman~\cite{kirkman})
 that such a system exists if and only if $t\equiv 1\text{ or }3\pmod 6$ and that any such system has exactly $\frac{t(t-1)}{6}$ lines. We define a binary operation $\star:[t]^2\to [t]$ as follows:

\begin{align*}
a\star b=
\begin{cases}
  a & \mbox{if } a=b \\
  c, \text{ where }c\text{ is the unique point such that }abc\text{ is a line}  & \mbox{if } a\neq b\text{.}
\end{cases}
\end{align*}

\noindent
This operation has the property that, for every fixed $a$ in $[t]$, the map $b\mapsto a\star b$ permutes the elements of each line containing $a$.
We also define a \emph{flag} of $S$ to be an ordered pair $(\ell,p)$ where $\ell$ is a line of $S$ and $p$ is a point on $\ell$, and let $F$ denote the set of all flags of $S$.
The number of flags is $3|S|=\binom{t}{2}$. For each line $\ell$, we choose an arbitrary ordering of the points on $\ell$ and, for any $i\in[3]$, we let $\ell^{(i)}$ denote the $i^{th}$ point of $\ell$.

Given $n$, let $k$ be the smallest natural number such that ${t\choose 2}^k+3k\geq n$. 
Clearly, $k \le \frac{1}{\log{t\choose 2}}\log n + 1$. Let $G$ be the complete bipartite graph with parts $V\subseteq F^k$ and $K=[k]\times [3]$, with $|V|=n-3k$. We define a $t$-edge-colouring $c$ of $G$ (using the points of our Steiner system as colours) as follows: for each $f\in V$ and $(i,j)\in K$, let $c\left(\left\{f,(i,j)\right\}\right)=p\star\ell^{(j)}$, where $(\ell,p)$ is the $i^{th}$ component of $f$. To show that adding an edge between two vertices in $V$ creates a rainbow triangle, it suffices to show that every pair of such vertices is joined by either two disjoint rainbow paths of length two using disjoint sets of colours or three such paths that each use a different pair of colours from a set of three. Suppose $f$ and $f'$ are $k$-tuples of flags that differ in the $i^{th}$ component, say $f_i=(\ell,p)$ and $f'_i=(\ell',p')$. First, consider the possibility that $\ell=\ell'$ and $p\neq p'$. In this case, for every $j\in[3]$, $p\star\ell^{(j)}\neq p'\star\ell^{(j)}$, and neither is equal to $(p\star p')\star\ell^{(j)}$. Thus each path $f$--$(i,j)$--$f'$ is a rainbow path of length two using a distinct pair of colours from $\ell$. Next, if $\ell\neq\ell'$, then each edge $\left\{f,(i,j)\right\}$ is coloured with a different point from $\ell$ and each edge ${f',(i,j)}$ is coloured with a different point from $\ell'$ for $j\in[3]$. Since $\ell$ and $\ell'$ have at most one point in common, at most one path $f$--$(i,j)$--$f'$ is monochromatic. If this is the case, then the other two such paths are rainbow with disjoint sets of colours. Otherwise, all such paths are rainbow, and at most one pair of them have a colour in common, so there is a pair that uses disjoint sets of colours.

It is possible that adding an edge between two vertices in $K$ in some colour does not create a rainbow triangle; there are at most $\binom{|K|}{2}$ such edges. 
We can add these coloured edges to $(G,c)$ to form an $\mathfrak{R}(K_3)$-saturated $t$-edge-coloured graph $(G',c')$ with at most
\[
    |V| |K| + \binom{|K|}{2} \le (n-|K|)|K| + |K|^2 = n|K| \le \frac{3}{\log {t\choose 2}}n\log n + 3n
\]edges.
\end{proof}

When $t=3$, the coefficient of the $n \log n$ term in the upper bound is $\frac{3}{\log 3}$, while for large values of $t$ it is approximately $\frac{1.5}{\log t}$.

\section{Upper bounds for disconnected graphs}\label{sec:upperboundsdiscon}

In this section, we shall show that the rainbow saturation number of a disconnected graph can be bounded above by the rainbow saturation number of one of its connected components, up to additive $O(n)$ term.
Moreover, we shall show that if $H$ is a disconnected graph with no isolated vertices, then the $t$-rainbow saturation number of $H$ is at most $O(n\log{n})$ answering a question from \cite{sat} for disconnected graphs.
Throughout the section, we assume, for simplicity of exposition, that $H$ has no isolated vertices.

For a sequence of graphs $H_{1}, \dots, H_{k}$ we say that $H_{i}$ is \emph{maximal}, for some $i \in [k]$, if $H_{i}$ is not isomorphic to any proper subgraph of $H_{j}$ for any $j \in [k]$.
Observe that every sequence has a maximal element; for example, we can take one with the largest total number of vertices and edges.
\begin{proposition} \label{prop:disconnected}
    Let $H$ be a graph with connected components $H_{1}, \dots, H_{k}$ and let $H_{i}$ be a maximal component.
    Then, for every $t \ge e(H)$, we have
    \[
        \sat{t}{n}{H} \le \sat{t}{n}{H_{i}} + O(n).
    \]
\end{proposition}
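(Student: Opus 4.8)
The plan is to take a rainbow-saturated graph $(G_i, c_i)$ witnessing $\sat{t}{n}{H_i}$ for the maximal component $H_i$, and to enlarge it by adding fixed rainbow copies of the \emph{remaining} components $H_1, \dots, H_{i-1}, H_{i+1}, \dots, H_k$ on a bounded number of new vertices, using a private palette of colours disjoint from those used inside $G_i$ wherever possible. The intuition is that, once a rainbow copy of each $H_j$ ($j \neq i$) is permanently present and uses colours that cannot interfere, a rainbow copy of $H$ is created precisely when a rainbow copy of the ``hardest'' component $H_i$ is created; thus the saturation behaviour of the whole graph is governed by $H_i$, and the extra gadgets only cost $O(n)$ edges (in fact $O(1)$, but absorbing them into the independent-set padding gives the clean $O(n)$ bound).

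First I would set up the construction carefully. Write $G$ as the disjoint union of a copy of the extremal graph $G_i$ for $H_i$ (on roughly $n$ vertices) together with one fixed rainbow copy $R_j$ of each component $H_j$ with $j \neq i$, where the $R_j$ are pairwise vertex-disjoint and disjoint from $G_i$. Since $t \ge e(H) = \sum_j e(H_j)$, there are enough colours to colour each $R_j$ rainbow; I would try to reserve a distinct block of colours for the union $\bigcup_{j \neq i} R_j$, though this requires $t$ to be large enough to separate them from $G_i$'s colouring, and the main technical care is in managing the palette when $t$ is only as large as $e(H)$. To keep the vertex count at exactly $n$, pad with an independent set or fold the gadgets into the low-degree part of $G_i$'s construction. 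The key structural claim is then twofold: (a) $(G,c)$ contains no rainbow $H$, and (b) adding any non-edge in any colour creates one.

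For \textbf{rainbow-freeness}, I would argue that any putative rainbow copy $W$ of $H$ in $G$ restricts to a rainbow subgraph on each connected piece of $G$; since $H_i$ is \emph{maximal}, the piece of $W$ landing in $G_i$ must contain a full rainbow copy of $H_i$ (no other component embeds there without itself being a copy of $H_i$), contradicting that $G_i$ is $\rainbow{H_i}$-free. Here maximality is exactly what rules out $H_i$ being covered by a smaller component's gadget. For \textbf{saturation}, consider adding a non-edge $e$ in colour $\alpha$. If $e$ lies inside the $G_i$-part, then by saturation of $G_i$ we create a rainbow $H_i$, which together with the fixed disjoint copies $R_j$ assembles a rainbow $H$ — provided the colour classes are disjoint, which is where the palette bookkeeping must be honoured. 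Non-edges touching the gadgets or padding can be handled directly or added in by hand, contributing at most $O(n)$ edges, exactly as in the proofs of Theorem~\ref{thm:trees_2_conn} and Theorem~\ref{thm:cycles}.

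I expect the \textbf{main obstacle} to be the colour-disjointness requirement: a genuine rainbow copy of $H$ needs all $e(H)$ edges to carry \emph{distinct} colours across all components simultaneously, so the copy of $H_i$ produced inside $G_i$ must avoid the colours used by the fixed gadgets $R_j$. When $t = e(H)$ there is no slack, and the colouring $c_i$ on $G_i$ may reuse the colours we wanted to reserve for the $R_j$. The way I would resolve this is to observe that a rainbow $H_i$ uses only $e(H_i)$ colours out of $t \ge e(H) = e(H_i) + \sum_{j\neq i} e(H_j)$, so after fixing a rainbow $H_i$ there are always enough \emph{unused} colours to rainbow-colour the gadgets; hence the cleaner route is to not fix the gadget colours in advance but to guarantee, for each way a rainbow $H_i$ appears, that the leftover colours suffice to complete $H$ on the gadget vertices. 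This reduces the whole problem to a counting/availability statement about colours, which is the crux of the argument.
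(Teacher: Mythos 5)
Your overall architecture --- an $\rainbow{H_i}$-saturated graph governing the saturation behaviour, plus $O(1)$ gadget vertices supplying the remaining components --- is the same as the paper's. However, you correctly identify the crux (colour availability when $t=e(H)$) and then leave it unresolved, and that is where essentially all of the content lies. The paper's fix is to place, on a bounded vertex set, a vertex-disjoint rainbow copy of $H'=\bigcup_{j\neq i}H_j$ for \emph{every} subset of $e(H')$ colours from $[e(H)]$; then whatever colours the newly created rainbow $H_i$ happens to use, some pre-built copy of $H'$ avoids all of them. Your suggestion to ``guarantee, for each way a rainbow $H_i$ appears, that the leftover colours suffice to complete $H$ on the gadget vertices'' gestures at this, but you give neither the construction nor an argument that including all these differently coloured copies preserves rainbow-$H$-freeness.

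The more serious gap is the multiplicity case, which your write-up ignores: the maximal component may be isomorphic to $\ell\ge 2$ components of $H$, so a rainbow $H$ requires $\ell$ vertex-disjoint, colour-disjoint rainbow copies of $H_i$. Your single fixed copy of each $H_j\cong H_i$ breaks saturation exactly as you fear (its fixed colours can clash with the new copy created inside the saturated part), while the naive repair --- including copies of $H_i$ in all colour sets --- would place two vertex-disjoint, colour-disjoint rainbow copies of $H_i$ in the gadget region and hence a rainbow $H$ already in $G$. The paper threads this needle with the gadget $H_1^{\star}$: rainbow copies of $H_i$ in every possible colour set, \emph{all sharing one common vertex}, so each gadget contributes at most one member to any vertex-disjoint family; taking $\ell-1$ such gadgets caps the count at $\ell-1<\ell$ and yields freeness. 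Relatedly, your freeness argument (``the piece of $W$ landing in $G_i$ must contain a full rainbow copy of $H_i$'') is not quite the right statement, since other components can embed in the saturated part; the paper instead just counts vertex-disjoint rainbow copies of $H_i$ across the three regions, using maximality only to rule out $H_i$ sitting inside a gadget built for some $H_j\not\cong H_i$. Without the all-colour-subsets gadgets and the shared-vertex device, the proof does not go through at $t=e(H)$.
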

\begin{proof}
    Without loss of generality we may assume that $i = 1$ and $H_1 \cong H_2 \cong \ldots \cong H_\ell$ (for some $\ell\in [k]$), and that no other component is isomorphic to $H_1$.  
    Let $H'=H_{\ell+1}\cup H_{\ell+2}\cup\ldots\cup H_{k}$. 

    Let $t'=e(H)\le t$ and consider the following graph $G$ on $n$ vertices. 
    First add vertex disjoint copies of all possible rainbow copies of $H'$ for every subset of size $|e(H')|$ in $[t']$. Write $V_1$ for the set of vertices spanned by these copies. 
    Second, consider the following coloured graph $H_1^{\star}$: for every set $A$ of colours of size $e(H_1)$ inside $[t']$, we add a rainbow of copy of $H_1$ with colours in $A$, where all rainbow copies share exactly one vertex. 
    Now we add $\ell-1$ vertex disjoint copies of $H_{1}^{\star}$ to $G$ and define $V_{2}$ to be the set of vertices spanned by these copies.
    In the set $V(G) \setminus (V_1 \cup V_2)$, consisting of the remaining vertices, we add a $\rainbow{H_{1}}$-saturated graph on $t$ colours. 
    It is easy to check that every non-edge in $V(G) \setminus (V_{1} \cup V_{2})$ is $\rainbow{H}$-saturated.
    Finally, if there are any non-edges which are not $\rainbow{H}$-saturated, we add those edges to $G$ in some colour that does not create a rainbow $H$. Clearly, there are at most $O(n)$ such edges.
    
    Let us show $G$ does not contain a rainbow copy of $H$. 
    Suppose for contradiction that it does.
    We shall obtain a contradiction by showing that the number of vertex disjoint rainbow copies of $H_{1}$ in $G$ is strictly smaller $\ell$.
    Note that $H_{1}$ cannot be a subgraph of $G[V_{1}]$ as, by construction, $H_{1}$ is not isomorphic to any connected component of $G[V_{1}]$ and, by maximality, $H_{1}$ cannot be a subgraph of any connected component of $G[V_{1}]$.
    Observe as well that each copy of $H_1^{\star}$ contains at most one rainbow copy of $H_1$.
    Finally, by construction, $V(G)\setminus V_1\cup V_2$ does not contain a rainbow copy of $H_1$.
    Therefore there are at most $\ell-1$ vertex disjoint rainbow copies of $H_{1}$.

    Let $p = |V_{1} \cup V_{2}|$.
    Observe that $p = \Theta(1)$ as $n$ goes to infinity.
    Therefore the number of edges in $G$ is at most $\binom{p}{2} + p(n-p) + \sat{t}{n-p}{H_{1}} \le pn + \sat{t}{n}{H_{1}} = \sat{t}{n}{H_{1}} + O(n)$.
\end{proof}

We have the following immediate corollary.
\begin{corollary}
    Let $H$ be a graph containing at least one component which is not a star and let $H'$ be a maximal component among the components of $H$ which are not stars.
    Then, for every $t \ge e(H)$, we have
    \[
        \sat{t}{n}{H} \le \sat{t}{n}{H'} + O(n) \le O(n\log{n}).
    \]
\end{corollary}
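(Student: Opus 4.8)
The plan is to obtain the two stated inequalities separately, each as a direct consequence of a result established earlier in the paper. The middle inequality $\sat{t}{n}{H} \le \sat{t}{n}{H'} + O(n)$ is exactly the content of Proposition~\ref{prop:disconnected}, provided I can justify that $H'$ is an admissible choice of maximal component there, while the right-hand inequality $\sat{t}{n}{H'} \le O(n\log n)$ will follow immediately from Corollary~\ref{corol:generalconnected}.

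The one point requiring care is that Proposition~\ref{prop:disconnected} asks for a component that is maximal among \emph{all} components of $H$, whereas $H'$ is only assumed maximal among the \emph{non-star} components. I would close this gap with a simple structural observation: any connected subgraph of a star is again a star. Indeed, a subgraph of $K_{1,m}$ is a disjoint union of stars together with isolated vertices, so each of its connected pieces is a star. Consequently a connected non-star graph cannot be isomorphic to a subgraph of any star. Thus $H'$ is not a subgraph of any star component of $H$, and by the choice of $H'$ it is not a proper subgraph of any other non-star component either. Hence $H'$ is in fact maximal among all components of $H$, and Proposition~\ref{prop:disconnected} applies with the maximal component taken to be $H'$, yielding $\sat{t}{n}{H} \le \sat{t}{n}{H'} + O(n)$.

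For the second inequality I would invoke Corollary~\ref{corol:generalconnected}. Since the ambient assumption of this section is that $H$ has no isolated vertices, every component spans at least one edge and hence at least two vertices; as $H'$ is connected and not a star, and the only connected graph on two vertices is the star $K_2 = K_{1,1}$, the component $H'$ must have at least three vertices. Moreover $t \ge e(H) \ge e(H')$, so the hypotheses of Corollary~\ref{corol:generalconnected} are met and $\sat{t}{n}{H'} = O(n\log n)$. Combining the two bounds gives $\sat{t}{n}{H} \le \sat{t}{n}{H'} + O(n) \le O(n\log n)$, as required. I do not expect any genuine obstacle beyond the maximality check described above; once that is in place the statement is essentially a bookkeeping consequence of Proposition~\ref{prop:disconnected} together with the connected upper bound of Corollary~\ref{corol:generalconnected}.
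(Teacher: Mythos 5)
Your proof is correct and follows essentially the same route as the paper: the paper likewise notes that $H'$, being a connected non-star, cannot be a subgraph of any star component (so it is maximal among all components), and then applies Proposition~\ref{prop:disconnected} together with Corollary~\ref{corol:generalconnected}. Your write-up merely spells out the maximality check and the verification that $|H'|\ge 3$ in more detail than the paper does.
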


\begin{proof}
    Observe that $H'$ cannot be a subgraph of a star, hence by Proposition~\ref{prop:disconnected} and Corollary~\ref{corol:generalconnected}, we have that
    \[
        \sat{t}{n}{H} \le \sat{t}{n}{H'} + O(n) \le O(n\log{n}).
    \] \end{proof}

We showed that if a disconnected graph contains a component which is not a star then its rainbow saturation number is subquadratic.
Since stars have rainbow saturation number which is quadratic in $n$, one might suspect that the same should hold for disconnected graphs where each component is a star.
The following proposition shows that this is not the case.

\begin{proposition} \label{prop:stars}
    Let $H = S_{1} \cup S_{2} \cup \cdots \cup S_{k}$ be a graph with more than one component, each of which is a star.
    Then for every $t \ge e(H)$ we have 
    \[
        \sat{t}{n}{H} \le O(n).
    \]
\end{proposition}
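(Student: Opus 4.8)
The plan is to exhibit, for all sufficiently large $n$, a $t$-edge-coloured graph $(G,c)$ on $n$ vertices with only $O(n)$ edges that is $\rainbow{H}$-saturated (for small $n$ a monochromatic $K_n$ suffices). Write $H=\bigcup_{i=1}^{k}S_i$ with $S_i=K_{1,a_i}$, $a_1\ge\cdots\ge a_k\ge 1$ and $k\ge 2$, and set $m=e(H)=\sum_i a_i$. I would take $G=D\cup M$, where $M$ is an independent set of size $n-O(1)$ (the \emph{bulk}) and $D$ is a gadget of constant size (the \emph{core}), and join each vertex of $M$ to $D$ by only $O(1)$ edges, so that $e(G)=O(n)$ holds automatically and the entire difficulty is in the colouring. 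The case $a_k=1$ is essentially immediate: the added edge can serve as the whole component $S_k\cong K_2$, and one only needs a pre-built rainbow copy of $S_1\cup\cdots\cup S_{k-1}$ avoiding the added colour, so the interesting case is $a_k\ge 2$.

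For the roles of the two parts, I would pre-install in the core $D$, for every admissible colour pattern, a rainbow copy of the graph $(S_1-\text{one leaf})\cup S_2\cup\cdots\cup S_k$, together with rainbow copies of the remainder $S_2\cup\cdots\cup S_k$ in all colour sets of size $m-a_1$; since $t=O(1)$ and each copy has constant size, $D$ is of constant order. Each $v\in M$ is joined to $D$ so that it can play the missing centre of $S_1$, i.e.\ so that $v$ already sees the colours required to complete an $S_1$ once one further distinctly-coloured edge is attached at it. The intended saturation witness for a non-edge $uv\subseteq M$ in a colour $\gamma$ is then: let $u$ be the centre of $S_1$, let the new edge $uv$ be its freshly added leaf-edge of colour $\gamma$, and draw the remaining components $S_2,\dots,S_k$ from a pre-built rainbow copy in $D$ whose colours avoid those used at $u$. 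Because $D$ stores every colour variant, such a copy exists and is vertex-disjoint from $u$ and $v$, producing a rainbow $H$. The $O(n)$ remaining non-edges that meet $D$ and are not yet saturated are simply added directly in a colour that keeps $G$ free, exactly as in the proofs of Theorems~\ref{thm:trees_2_conn} and~\ref{thm:cycles}.

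Freeness is the heart of the matter, and the step I expect to be the main obstacle. Since $H$ is disconnected, a rainbow copy could in principle be assembled from components spread across $M$ and $D$, so I would force a global obstruction: a distinguished core vertex $z_0$ (or, alternatively, a distinguished colour) that \emph{every} rainbow copy of \emph{every} component $S_i$ occurring in $G$ is compelled to use, so that two vertex-disjoint components can never coexist and $G$ is $\rainbow{H}$-free. The delicate point is to reconcile this forcing with saturation of the colours that already appear at the bulk vertices: if $u$ already sees colour $\gamma$ then adding $uv$ in colour $\gamma$ looks redundant, and a naive forcing that makes $G$ free also blocks the completion. The resolution I have in mind is to route every existing colour-$\gamma$ edge leaving a bulk vertex into the blocking vertex $z_0$, so that inside $G$ any candidate $S_1$ at $u$ and the pre-built remainder always collide at $z_0$ (hence no rainbow $H$ in $G$), whereas the freshly added edge $uv$ supplies colour $\gamma$ while \emph{avoiding} $z_0$, releasing $z_0$ for the other components and yielding a genuine rainbow $H$. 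Checking that this single device simultaneously destroys all rainbow copies in $G$ and unblocks every pair $(uv,\gamma)$ — across all colours, including the already-present ones, and across the multiplicity of the largest star size $a_1$ — is the computation I expect to carry the weight of the argument; granting it, the bound $\sat{t}{n}{H}=O(n)$ follows.
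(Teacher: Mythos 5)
Your high-level architecture (a constant-size core, an independent bulk, each bulk vertex attached by $O(1)$ edges, then patching the $O(n)$ core-incident non-edges) matches the paper's, but the device you propose for freeness --- a single distinguished vertex $z_0$ (or colour) that every rainbow copy of every component is compelled to use --- is exactly the step you leave unverified, and it does not go through in general. For a non-edge $uv$ in the bulk to be saturated in the way you describe, $u$ must already see at least $a_1-1$ distinct colours, where $S_1=K_{1,a_1}$ is the largest star. On the other hand, for every rainbow copy of the smallest star $K_{1,a_k}$ centred at $u$ to pass through $z_0$, at most $a_k-1$ distinct colours may appear on edges from $u$ to $D\setminus\{z_0\}$, and the single edge $uz_0$ carries only one colour, so $u$ sees at most $a_k$ distinct colours in total. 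These requirements are incompatible as soon as $a_1\ge a_k+2$ (e.g.\ $H=K_{1,4}\cup K_{1,2}$); the ``distinguished colour'' variant runs into the same count. Your sketch also does not address rainbow star copies centred in the core with leaves in the bulk. Since you explicitly defer the verification of this device (``the computation I expect to carry the weight of the argument''), the proof has a genuine gap at its central point, and the specific mechanism proposed cannot be made to work as stated.

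The paper resolves freeness with a weaker forcing that a pigeonhole can exploit, rather than a single forced vertex. For $k=2$ the core is a clique $K$ on $|S_1|+|S_2|-3$ vertices, each core vertex $x_i$ is joined to \emph{every} bulk vertex, and all edges from $x_i$ to the bulk receive the same colour $i$ (so bulk vertices see $|S_1|+|S_2|-3$ distinct colours, enough for saturation). The monochromaticity at each core vertex means no rainbow star can have its centre in $K$ and two leaves in the bulk, whence every rainbow copy of $S_i$ must use at least $|S_i|-1$ vertices of $K$; two vertex-disjoint copies would need $|S_1|+|S_2|-2>|K|$ of them, so $G$ is free. The case $k\ge 3$ is then reduced to $k=2$ by adjoining $k-2$ pairwise disjoint rainbow stars and re-saturating. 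Replacing your single forced vertex by this ``small forced set plus counting'' obstruction is precisely what your argument is missing.
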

\begin{proof}
    Suppose $|S_{1}| \le |S_{2}| \le \cdots \le |S_{k}|$.
    First we shall show the case when $k=2$.
    Let $a = |S_{1}| - 1$ and $b = |S_{2}| - 1$.
    Let $G = K \cup L$ where $G[K]$ is a complete graph of size $a + b - 1$ and $L$ is an independent set of size $n - |K|$.
    Let $K = \left\{ x_{1}, \dots, x_{a+b-1} \right\}$.
    First we join every vertex $x_{i} \in K$ with every vertex $y \in L$ and give the edge colour $i$.
    Next we shall describe the colouring of the edges inside $K$.
    Let $x_{i},x_{j} \in K$ where $i \le j$.
    If $i \le a$ and $j \ge a$ then assign $a+b$ as the colour of $x_{i}x_{j}$, otherwise assign $j$ as the colour of $x_{i}x_{j}$.

    We claim that there is no rainbow copy of $S_{1} \cup S_{2}$ in $G$.
    To see that, observe first that every rainbow copy of $S_{i}$ in $G$ uses at least $|S_{i}| - 1$ vertices of $K$.
    Indeed, suppose for contradiction that it is not the case and that there is a rainbow copy of $S_{i}$ which uses fewer than $|S_{i}| - 1$ vertices of $K$.
    Then it must use at least two vertices, say $x, y$, of $L$.
    It follows from independence of $L$ that the center $z$ of that rainbow copy must be in $K$.
    We obtain a contradiction by noticing that $zx$ and $zy$ have the same colour.
    Therefore if there is a rainbow copy of $S_{1} \cup S_{2}$ then it has to use at least $a+b$ vertices of $K$, which is a contradiction since there are only $a+b-1$ such vertices.

    Next we shall show that every non-edge is $\rainbow{H}$-saturated.
    Consider any non-edge $xy$ in $L$ and any colour $c \in [t]$.

    If $c \le a$ then we find a copy of $S_{1}$ in $\left\{ x, y, x_{1}, \dots, x_{a} \right\} \setminus \left\{ x_{c} \right\}$ with $x$ being the center and a copy of $S_{2}$ in $\left\{ x_{c}, x_{a+1}, \dots, x_{a+b-1}, z \right\}$ with $x_{a+1}$ as the center, for any $z \in L \setminus \left\{ x, y \right\}$.
    Observe that those two copies are vertex disjoint and the copy of $S_{1}$ uses only colours from $[a]$ and the copy of $S_{2}$ uses colours from $[a+1, a+b]$.
    Hence we have a rainbow copy of $S_{1} \cup S_{2}$.

    If $c \in [a+1, a+b-1]$ then we find a copy of $S_{2}$ in $\left\{ x, y, x_{a}, \dots, x_{a+b-1} \right\} \setminus \left\{ x_{c} \right\}$ with $x$ being the center and a copy of $S_{1}$ in $\left\{x_{1}, \cdots, x_{a-1}, x_{c}, z \right\}$ with $x_{1}$ as the center, for any $z \in L \setminus \left\{ x, y \right\}$.
    Observe that those two copies are vertex disjoint and the copy of $S_{1}$ uses only colours from $[a-1]\cup\left\{ a+b \right\}$ and the copy of $S_{2}$ uses colours from $[a, a+b-1]$.
    Hence we have a rainbow copy of $S_{1} \cup S_{2}$.

    In the remaining case when $c \ge a+b$, it is easy to check that we can find a rainbow copy of $S_{1} \cup S_{2}$ where both of the centers are in $L$.

    Observe that we have $e(G) \le (n - |K|)|K| + |K|^{2} = |K|n = (a+b-1)n = (|S_{1}| + |S_{2}|-3)n$.

    Now, suppose $k \ge 3$. 
    We let $t^{\star} = e(H)$. Moreover, let $G = G' \cup G''$ where $G'$ is an $\rainbow{S_{1} \cup S_{2}}$-saturated graph on $n'=n - (k-2)(t^{\star}+1)$ vertices with $\sat{t^{\star}}{n'}{S_1\cup S_2}$ edges and $G''$ is the vertex-disjoint union of $k-2$ rainbow copies of $t^{\star}$-stars.
    It is easy to check that there is no rainbow copy of $H$ in $G$.
    Indeed, by assumption there can not be two vertex-disjoint rainbow copies of distinct components of $H$ appearing in $G'$.
    Note as well that there can only be at most $k-2$ vertex-disjoint stars in $G''$, hence in total there are at most $k-1$ disjoint rainbow components of $H$ in $G$.
    Finally, it is clear that the addition of any coloured non-edge inside $G'$ creates a rainbow copy of $H$. 
    Now, we keep adding edges to $G$ (with both endpoints in $G''$ or with one endpoint in $G'$ and one in $G''$) until $G$ is saturated.
    The case $k=2$ shows that $\sat{t^{\star}}{n}{(S_1\cup S_2)} \le (|S_{1}| + |S_{2}|-3)n$, hence, the number of edges in $G$ is at most $(n-|G''|)|G''| + |G''|^{2} + e(G') \le n|G''| + (|S_{1}|+|S_{2}|-3)n \le O(n)$.
\end{proof}

We have the following corollary from Propositions~\ref{prop:disconnected}~and~\ref{prop:stars}.
\begin{corollary}
    Let $H$ be a disconnected graph.
    Then for every $t \ge e(H)$ we have
    \[
        \sat{t}{n}{H} \le O(n \log{n}) = o(n^{2}).
    \]
\end{corollary}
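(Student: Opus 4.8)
The plan is to prove this by a clean dichotomy on the structure of the components of $H$, reducing entirely to the two results just established. Since we are assuming throughout the section that $H$ has no isolated vertices and $H$ is disconnected, it decomposes into connected components $H_1, \dots, H_k$ with $k \ge 2$, each of which contains at least one edge.

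First I would treat the case in which at least one component of $H$ is not a star. Here I simply invoke the corollary immediately preceding this statement: taking $H'$ to be a maximal component among those components of $H$ which are not stars, that corollary yields directly
\[
    \sat{t}{n}{H} \le \sat{t}{n}{H'} + O(n) \le O(n\log{n}).
\]

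The complementary case is when every component of $H$ is a star. Because $H$ is disconnected (so $k \ge 2$) and has no isolated vertices, $H$ is precisely a union of at least two stars, each with at least one edge, which is exactly the situation covered by Proposition~\ref{prop:stars}. That proposition then gives the stronger bound $\sat{t}{n}{H} \le O(n)$.

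In either case we conclude $\sat{t}{n}{H} \le O(n\log{n}) \le o(n^{2})$, as claimed. There is no real obstacle to surmount here: all of the substantive work has already been carried out in Proposition~\ref{prop:disconnected} and its corollary (for the non-star case) and in Proposition~\ref{prop:stars} (for the all-stars case), so this final statement is just the assembly of those two pieces via the observation that a disconnected graph without isolated vertices must fall into exactly one of the two cases.
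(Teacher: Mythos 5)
Your proposal is correct and follows essentially the same route as the paper, which derives this corollary directly from Proposition~\ref{prop:disconnected} (via the preceding corollary for the case of a non-star component) and Proposition~\ref{prop:stars} (for the case where every component is a star). The dichotomy you describe is exactly the intended assembly of those two results.
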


\section{Deducing the main results} \label{sec:proofs}
We are now ready to deduce Theorems~\ref{thm:class}~and~\ref{thm:largenumbercolours}. 

\begin{proof}[\textbf{Proof of Theorem}~\ref{thm:class}]
    First, note that item~\ref{thm:class:star} is a result appearing in \cite{sat} and item~\ref{thm:class:Hkl} is just a restatement of Theorem~\ref{thm:rotatededge}. 
    Now, the lower bounds in items~\ref{thm:class:conical}, \ref{thm:class:triangular} follow by Theorems~\ref{thm:lower_bound_triangles}~and~\ref{thm:max_degree}, respectively, and the upper is a consequence of Theorem~\ref{thm:cycles} since in both cases $H$ must contain a cycle. 

    In item~\ref{thm:class:linear} the lower bound follows from Lemma~\ref{lem:lower_bound_linear} and the upper bound follows from Theorem~\ref{thm:trees_2_conn}. 
\end{proof}

\begin{proof}[\textbf{Proof of Theorem}~\ref{thm:largenumbercolours}]
    Observe first that if $H$ is a connected graph on at most four vertices which contains a leaf and no conical vertex, then $H$ must be a path on four vertices, hence by Theorem~\ref{thm:class}\ref{thm:class:linear} its $t$-rainbow saturation number is linear.
    We may therefore assume that $|H|\ge 5$. 
    Let $xy$ be a pendant edge of $H$. 
    If $H\setminus \{x,y\}$ is not a clique then we are done by Theorem~\ref{thm:bfvw}.
    Hence, we may then assume $H=H_{k, \ell}$ for some $k\ge 3$ and $\ell \le k-1$. 
    Suppose $\ell \le k-2$, then result follows by Theorem~\ref{thm:Hkl}. 
    Hence, we may assume $\ell=k-1$ in which case $k$ must be odd, by assumption, and therefore $H$ is a $K_{k+1}$ with a rotated edge, so we are done by Theorem~\ref{thm:rotatededge}. 
\end{proof}

\section{Concluding remarks}\label{sec:openproblems}
We have shown that for any $t\geq {r \choose 2}$, $\sat{t}{n}{K_r}=\Theta(n\log n)$ when $n\to\infty$, i.e., there exist constants $c_1=c_1(t,r)$ and $c_2=c_2(t,r)$ such that $c_1 n\log n\leq \sat{t}{n}{K_r}\leq c_2 n\log n$. 
There is still an enormous gap between our lower and upper bounds.
In an earlier version of this manuscript, we conjectured that the true value is closer to the upper bound:

\begin{conjecture} \label{conjecture:complete_our}
For every $r\geq 3$, there exists a constant $c(r)>0$ such that for every $t\geq{r\choose 2}$, \[\sat{t}{n}{K_r}\geq \frac{c(r)}{\log t}n\log n\] for all $n\geq n_0(t)$.
\end{conjecture}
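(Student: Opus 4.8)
The plan is to sharpen the sampling argument behind Theorem~\ref{thm:lower_bound_triangles}, whose only real loss is a factor of $t$ coming from treating the colours in \emph{pairs}. Fix an $\rainbow{K_r}$-saturated $t$-edge-coloured graph $(G,c)$ on $n$ vertices with $m$ edges; by Lemma~\ref{lem:lower_bound_linear} we may assume $m\ge\frac{n-1}{2}$. The first step is the same resolving observation as before: for every non-edge $uw$, adding $uw$ in colour $1$ creates a rainbow $K_r$ which must use the new edge, so $u$ and $w$ have a common neighbour $v$ inside this clique with $c(uv)\neq c(vw)$. This is the only consequence of saturation I will use.

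The key new idea is to replace the whole family of bipartite graphs $B_v^{i,j}$ by a single uniformly random \emph{target colour} at each vertex. Concretely, let $\{R_v:v\in V(G)\}$ be independent and uniform on $[t]$, and set
\[
    U=\left\{u\in V(G): c(uv)=R_v \text{ for every } v\in N(u)\right\}.
\]
I claim $U$ is a clique, for \emph{every} outcome of the randomness. Indeed, if $uw$ were a non-edge with $u,w\in U$, then the resolving common neighbour $v$ above would satisfy $c(uv)=R_v$ (as $u\in U$) and $c(vw)=R_v$ (as $w\in U$), forcing $c(uv)=c(vw)$, a contradiction. Hence $\binom{|U|}{2}\le m$ always, so $|U|\le\sqrt{2m}+1$ deterministically. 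On the other hand the $R_v$ are independent and each $c(uv)$ is a fixed colour, so $\mathbb{P}(u\in U)=\prod_{v\in N(u)}t^{-1}=t^{-d(u)}$, and by convexity of $x\mapsto t^{-x}$,
\[
    \mathbb{E}[|U|]=\sum_{u\in V(G)}t^{-d(u)}\ge n\cdot t^{-2m/n}.
\]

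Taking expectations in the deterministic bound gives $n\cdot t^{-2m/n}\le\mathbb{E}[|U|]\le\sqrt{2m}+1$. The final step is exactly the regime analysis already carried out in Theorem~\ref{thm:lower_bound_triangles}: assuming $m<(\gamma+o(1))\frac{n\log n}{\log t}$ forces $m=n^{1+o(1)}$, whence $t^{-2m/n}=n^{-2\gamma+o(1)}$ and the inequality reads $n^{1-2\gamma+o(1)}\le n^{1/2+o(1)}$, yielding $\gamma\ge\frac14$. This would establish the conjecture with the \emph{uniform} constant $c(r)=\frac14$, matching the $\frac{1}{\log t}$ dependence of the upper bound in Theorem~\ref{thm:randomcomplete}. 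The conceptual gain is that one uses the \emph{same} randomness $R_v$ to test both endpoints of a resolved non-edge, so a colour clash at the resolving vertex is automatic, and each edge is charged only once (in a $\log t$-weighted sense) rather than $t-2$ times.

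The point I would scrutinise most carefully is the interaction of the two inequalities in the large-$t$ regime: the convexity bound $\mathbb{E}[|U|]\ge n\,t^{-2m/n}$ is informative only while $m=n^{1+o(1)}$, which is precisely the range where the conjectured bound is not already trivial, so one must check that the $o(1)$ error terms and the threshold $n_0(t)$ can be controlled uniformly across all $t\ge\binom{r}{2}$, including $t$ growing with $n$. I would also verify whether replacing ``uniform on $[t]$'' by ``uniform over the colours actually appearing at $v$'' yields a strictly better constant worth recording. Modulo this bookkeeping, the single-target-colour sampling appears to remove the spurious factor of $t$ cleanly, which is the whole difficulty.
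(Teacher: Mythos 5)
This statement is Conjecture~2 of the paper: the authors give no proof of it, and their strongest result in this direction, Theorem~\ref{thm:lower_bound_triangles}, only yields the constant $\frac{1}{4t}$ where the conjecture asks for $\frac{c(r)}{\log t}$. So there is no paper proof to compare against, and I have assessed your argument on its own terms: I believe it is correct and does prove the conjecture. The combinatorial input is exactly the one used in Theorem~\ref{thm:lower_bound_triangles} --- every non-edge $uw$ of an $\rainbow{K_r}$-saturated graph has a common neighbour $v$ with $c(uv)\neq c(vw)$, because the rainbow $K_r$ created by adding $uw$ must contain a rainbow triangle through $uw$ --- and all of the gain is in the randomisation. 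Where the paper lets $u$ survive at a neighbour $v$ only if it escapes each of the $t-2$ random sets $T^{i,j}_v$ whose index pair contains $c(uv)$ (probability $2^{-(t-2)}$ per neighbour), you let $u$ survive at $v$ only if $c(uv)$ hits a single uniform colour $R_v$ (probability $t^{-1}$ per neighbour). Your set $U$ is still a clique for every outcome, since two non-adjacent survivors would force $c(uv)=R_v=c(wv)$ at the resolving vertex; Jensen gives $\mathbb{E}|U|\ge nt^{-2m/n}$; and comparing with the deterministic bound $|U|\le\sqrt{2m}+1$ and running the paper's own bootstrap gives $m\ge\left(\frac14-o(1)\right)\frac{n\log n}{\log t}$. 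The uniformity issue you flag at the end is harmless: the conjecture permits $n_0$ to depend on $t$, and in fact with the slightly weaker constant $\frac15$ the inequality $nt^{-2m/n}\le\sqrt{2m}+1$ together with $m\le\frac{n\log n}{5\log t}$ already forces $n^{3/5}\le 1+\sqrt{2n\log n}$, which fails beyond an absolute threshold, so $n_0$ can even be taken independent of $t$ and $r$. (The only degenerate case, an isolated vertex lying in $U$ vacuously, does not arise: your resolving-neighbour claim itself shows a saturated graph has no isolated vertices for $r\ge 3$.) Two remarks worth recording: the bound is consistent with, and within a constant factor of, the upper bounds in Theorems~\ref{thm:randomcomplete} and~\ref{tri}; and since the only property of $K_r$ used is that every edge lies in a triangle, the identical argument upgrades Theorem~\ref{thm:lower_bound_triangles} to $\sat{t}{n}{H}\ge\left(\frac14-o(1)\right)\frac{n\log n}{\log t}$ for every such $H$.
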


Recently, Kor\'andi~\cite{Korandisat} resolved Conjecture~\ref{conjecture:complete_our} by showing the following.

\begin{theorem}[\cite{Korandisat}]
For every $r\geq 3$, and any $t\geq{r\choose 2}$, \[\sat{t}{n}{K_r}\geq \frac{t(1+o(1))}{(t-r+2)\log(t-r+2)}n\log n\] as $n\rightarrow \infty$, with equality for $r=3$. 
\end{theorem}

This theorem, together with Theorem~\ref{thm:randomcomplete} gives: \[\sat{t}{n}{K_r}=\Theta_{r}\Big (\frac{n\log n}{\log t} \Big )\]. 

Now, when $H$ is an even clique with a rotated edge, we know that $\sat{t}{n}{H}$ is always $\Theta(n)$ for $t\geq e(H)$. However, for odd cliques with rotated edges, we do not even know the asymptotic behaviour of $\sat{t}{n}{H}$ for large values of $t$.

\begin{question} \label{question:rotated}
If $H$ is a copy of $K_r$ with a rotated edge (as shown in Figure~\ref{fig1}) for some \emph{odd} $r\geq 5$ and $t\geq{r\choose 2}$, what is the asymptotic growth rate of $\sat{t}{n}{H}$?
\end{question}

\begin{figure}[htbp]
\centering
\begin{tikzpicture}
[
  every node/.style={draw,circle,inner sep=2pt},
  fsnode/.style={fill=black},
  ssnode/.style={fill=black},
  every fit/.style={ellipse,draw,inner sep=-2pt,text width=2mm},
  shorten >= 2pt,shorten <= 2pt, auto, thick]

\foreach \a in {1,2,...,5}{
\node [fsnode, anchor=center] at ({360/5 * (\a-1)}:1.5 cm+ 1)(f\a) {};
}
\begin{scope}[xshift=33.75mm,yshift=0cm,start chain=going right,node distance=10mm]
  \node[ssnode,on chain] (s1) {};
\end{scope}

\draw[black](f1) -- (f2) [dashed];
\coordinate (a) at ($(f1)!0.5!(f2)$);
\draw[black](f1) -- (s1);
\coordinate (b) at ($(f1)!0.5!(s1)$);
\draw[->, bend right=90] (a) to [out=60, in=120] (b);
\draw[black](f1) -- (f3);
\draw[black](f1) -- (f4);
\draw[black](f1) -- (f5);
\draw[black](f2) -- (f3);
\draw[black](f2) -- (f4);
\draw[black](f2) -- (f5);
\draw[black](f3) -- (f4);
\draw[black](f3) -- (f5);
\draw[black](f4) -- (f5);
\end{tikzpicture}
\caption{$K_5$ with a rotated edge. The dashed line represents the removed edge.}
\label{fig1}
\end{figure}
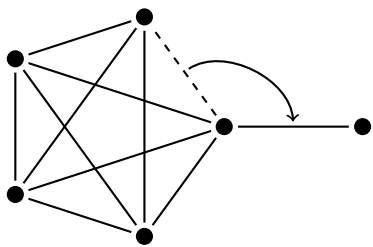

    The following conjecture together with Theorem~\ref{thm:class} and Question~\ref{question:rotated} would completely classify the possible rates of growth of $\sat{t}{n}{H}$ for all connected graphs $H$ and every constant $t\geq e(H)$.

\begin{conjecture}
    Let $H$ be a connected graph (other than an odd clique with a rotated edge) with an edge not in a triangle and no conical vertex. 
    Then, for every $t\geq e(H)$, $\sat{t}{n}{H}=O(n)$.
\end{conjecture}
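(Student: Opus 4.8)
The plan is to reduce to the genuinely new case and then to generalise the gadget behind Theorem~\ref{thm:rotatededge}. First I would dispose of the easy reduction: if $H$ has a \emph{non}-pendant edge outside every triangle, then $\sat{t}{n}{H}=O(n)$ for all $t\ge e(H)$ by Theorem~\ref{thm:trees_2_conn}. So I may assume every non-pendant edge of $H$ lies in a triangle; since $H$ still has an edge outside every triangle, that edge is pendant. Fix a pendant edge $xy$ with leaf $y$ and set $H'=H\setminus\{y\}$, which is connected. After this reduction the class of remaining graphs $H$ coincides exactly with the family of Theorem~\ref{thm:largenumbercolours} (connected, no conical vertex, at least one pendant edge, and not an odd clique with a rotated edge), so the entire content of the conjecture is to lower the colour threshold there from $|H|^2$ down to $e(H)$.

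Next I would look for a bounded gadget generalising the graph $\Gamma$ from the proof of Theorem~\ref{thm:rotatededge}. Concretely, I would try to build $\Gamma=\Gamma_H$ on $O_H(1)$ vertices, coloured using only the $e(H)$ colours, with two properties: $(\mathrm a)$ $\Gamma$ contains no rainbow copy of $H$; and $(\mathrm b)$ for every vertex $u$ of $\Gamma$ and every colour $i$, there is a rainbow copy of $H'$ in $\Gamma$ in which $u$ plays the role of $x$ and which uses every colour except $i$. Granting such a gadget, I would take $G$ to be a disjoint union of $\lfloor n/|\Gamma|\rfloor$ copies of $\Gamma$ together with a monochromatic clique on the leftover vertices; freeness of $G$ follows from $(\mathrm a)$ componentwise, and $e(G)=O(n)$ since each copy of $\Gamma$ has $O_H(1)$ edges. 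For saturation, a new non-edge $uv$ coloured $i$ has an endpoint, say $u$, in some copy of $\Gamma$; property $(\mathrm b)$ supplies a rainbow $H'$ at $u$ missing exactly colour $i$, and attaching $v$ as the leaf through the $i$-coloured edge $uv$ completes a rainbow $H$. (When $t>e(H)$ and $i$ is a fresh colour, any rainbow $H'$ at $u$ missing a single colour already suffices, since the pendant then contributes a genuinely new colour.) This is precisely the mechanism of Theorem~\ref{thm:rotatededge}, where $H'$ is $K_r$ minus an edge and $(\mathrm b)$ is furnished by a proper $(r-1)$-edge-colouring of $K_r$.

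The bulk of the work, and the main obstacle, is the construction of $\Gamma_H$ for a general core $H'$. Property $(\mathrm b)$ is a strong ``colour-regularity'' condition, asserting that the rainbow copies of $H'$ through each vertex realise every one-colour deficiency; this is exactly what a $1$-factorisation provides in the clique case but has no obvious analogue for arbitrary $H'$. I would attack it in two regimes mirroring Theorem~\ref{thm:largenumbercolours}. When $H=H_{k,\ell}$ with $\ell\le k-2$, the middle vertex attaches to only a proper subset of the clique, and I would exploit this slack, building $\Gamma$ from a $\Gamma$-type product on $K_k$ and using the freedom in choosing $\ell<k-1$ clique-neighbours to realise every missing colour, thereby avoiding the two colour-disjoint cliques that force $t\ge k(k-1)$ in Theorem~\ref{thm:Hkl}. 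When $H'$ is not a clique, every core edge lies in a triangle, and I would assemble $\Gamma$ from rainbow copies of $H'$ glued along this triangle structure, arranging the colouring so that the only way to complete a rainbow $H$ forces two equal colours at $x$ — the properness-type contradiction yielding $(\mathrm a)$. The parity obstruction already visible in Theorem~\ref{thm:rotatededge} (a proper $(r-1)$-edge-colouring of $K_r$ exists only for even $r$) is exactly what blocks odd cliques with a rotated edge, so I expect the construction to require a genuine case analysis and, in the clique-core case, an even/odd dichotomy. Reconciling $(\mathrm a)$ and $(\mathrm b)$ \emph{simultaneously} for every $t\ge e(H)$ is where I expect the real difficulty to lie.
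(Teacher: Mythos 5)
This statement is posed in the paper as an open conjecture; the paper offers no proof of it, only the remark that it can be confirmed when $t\geq\binom{|H|-1}{2}$ (via Theorem~\ref{thm:class} when $H$ falls into one of its classes, and via Theorem~\ref{thm:largenumbercolours} otherwise). Your opening reduction is sound and essentially reproduces that remark: if some non-pendant edge of $H$ avoids all triangles you are done by Theorem~\ref{thm:trees_2_conn}, and otherwise the remaining graphs are exactly those of Theorem~\ref{thm:largenumbercolours}, so the whole content of the conjecture is to push the colour threshold from $|H|^{2}$ (or $\binom{|H|-1}{2}$) down to $e(H)$.

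The proposal is not, however, a proof: the entire weight of the argument rests on the gadget $\Gamma_H$ with properties $(\mathrm a)$ and $(\mathrm b)$, and you never construct it — you only describe two regimes in which you would \emph{try} to build it and explicitly flag that reconciling $(\mathrm a)$ and $(\mathrm b)$ simultaneously is where the difficulty lies. That difficulty is genuine and is precisely why the statement remains open. Property $(\mathrm b)$ is an extremely rigid colour-regularity demand (every vertex must support, for each of the $e(H)$ colours $i$, a rainbow $H'$ realising exactly the palette $[e(H)]\setminus\{i\}$), and in the one case where the paper achieves something of this kind (Theorem~\ref{thm:rotatededge}) it relies on a $1$-factorisation of $K_r$, an algebraic structure with no known analogue for general cores $H'$; indeed the parity obstruction you mention is exactly what leaves Question~\ref{question:rotated} open even for cliques. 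The surrounding scaffolding (disjoint copies of $\Gamma_H$ plus a leftover monochromatic clique, the count $e(G)=O_H(n)$, the saturation mechanism attaching the new endpoint as the leaf) is fine and matches the template of Theorem~\ref{thm:rotatededge}, but without an actual construction of $\Gamma_H$ the argument establishes nothing beyond what the paper already states.
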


    Note that we can confirm this conjecture when the number of available colours is at least ${|H|-1 \choose 2}$. 
    Indeed, either $H$ is in one of the classes defined in Theorem~\ref{thm:class}, in which case we are done, or $H$ has a leaf and is not a clique with a rotated edge, hence by Theorem~\ref{thm:largenumbercolours} we have $\sat{t}{n}{H} = \Theta(n)$.

One different direction would be to allow the palette of colours to be infinite. 
We have only considered this question for complete graphs and showed that $\sat{}{n}{K_{r}} \le 2(r-2)n$ for any $r \ge 3$.

Recall that the construction in Corollary~\ref{cor:randomcomplete} is a disjoint union of an independent set $A$ and two $(r-2)$-cliques $B$ and $C$, such that all the edges between $A$ and $B \cup C$ are present and all the edges in $B$, $C$ and between $A$ and $B\cup C$ receive different colours.
We conjecture that, for $n \ge 2(r-2)$, the above construction is best possible up to the configuration of the edges between $B$ and $C$.
\begin{conjecture}
For any integer $r \ge 3$, there exists a constant $C_{r}$ depending only on $r$ such that, for any $n \ge 2(r-2)$,
\[
    \sat{}{n}{K_r}= 2(r-2)n+C_{r}.
\]
\end{conjecture}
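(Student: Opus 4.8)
The plan is to prove matching upper and lower bounds. For the upper bound I would make the construction of Corollary~\ref{cor:randomcomplete} precise: take two vertex-disjoint rainbow $(r-2)$-cliques $B,C$, an independent set $A$ of size $n-2(r-2)$ with every vertex of $A$ joined to all of $B\cup C$, and the edges inside $B\cup C$ arranged and coloured so as to avoid a rainbow $K_r$ (for $r\ge 4$ one can make all $B$--$C$ edges present but share a single repeated colour, so any $r$ vertices of $B\cup C$ span two monochromatic edges, while $A$ independent keeps the largest rainbow clique at size $r-1$). Every non-edge then lies inside $A$, and adding it in a colour $i$ completes a rainbow $K_r$ through whichever of $B$ or $C$ avoids $i$; this is exactly what forces each vertex of $A$ to reach both cores, giving degree essentially $2(r-2)$ and a total of $2(r-2)n+O_r(1)$ edges, which defines the candidate additive constant $C_r$.

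The heart of the problem is the matching lower bound. Over an infinite palette, saturation unwinds into two clean conditions: since a fresh colour is always available, every non-edge $uv$ has a rainbow $(r-2)$-clique in $N(u)\cap N(v)$, so every vertex with a non-neighbour has degree at least $r-2$; and for every colour $i$ occurring in $G$ there must be such an extension whose $\binom r2-1$ edges avoid $i$, i.e. no single colour lies in \emph{all} rainbow extensions of $uv$. The target is to amplify the trivial bound $\deg\ge r-2$ into an amortised statement $\sum_v\deg(v)\ge 4(r-2)n-O_r(1)$: a vertex of degree close to $r-2$ must send all of its edges into one small clique $S$, whereupon the avoid-every-colour condition, combined with the fact that rainbow-freeness caps every rainbow clique at size $r-1$ and so forbids a single extra vertex from forming a rainbow $K_r$ with $S$, forces a second, essentially colour-disjoint clique in the common neighbourhood of each non-neighbour; a charging scheme would then move the resulting deficiency onto the high-degree ``hub'' vertices.

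The main obstacle is pinning down the leading constant, and here the conjecture is genuinely delicate because the extremal configuration is \emph{not} robust: the avoid-every-colour condition does not force two vertex-disjoint extensions one non-edge at a time. For $r=4$ a single triangle $D=\{d_0,d_1,d_2\}$ coloured with $c(d_0d_1)=c(d_0d_2)$ and its third edge distinct is itself non-rainbow, yet every vertex of $D$ is removable and the three $2$-subsets of $D$ collectively defeat every colour, so attaching an entire independent set to $D$ at degree $r-1=3$ already yields an $\rainbow{K_4}$-saturated graph with only $3n-6$ edges, strictly fewer than $2(r-2)n=4n$. Thus the conjecture as stated fails for $r=4$, and I would expect the true value there to have leading coefficient $r-1$. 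The same device breaks down for $r\ge 5$, since no colouring of $K_{r-1}$ can be simultaneously removable at every vertex and still admit an $(r-2)$-subset avoiding a repeated colour, and this is precisely what should let the coefficient $2(r-2)$ survive for $r\ge 5$ (and $r=3$). The real work, and the principal difficulty, is therefore a stability analysis that rules out all such economical ``dense-core'' constructions for $r\ge 5$ and isolates the two-core graph above as the unique near-optimum; without first carrying this out, the stated constant $2(r-2)$ cannot be trusted uniformly in $r$.
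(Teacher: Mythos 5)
You should first be clear that this statement is one of the open conjectures in Section~\ref{sec:openproblems}: the paper offers no proof of it, only the upper bound $\sat{}{n}{K_r}\le 2(r-2)n$ of Corollary~\ref{cor:randomcomplete}, so there is no argument of the authors to compare your lower-bound strategy against. As a proof attempt, your proposal is incomplete in exactly the place where the conjecture is hard: the entire lower bound is left as an unexecuted plan (a ``charging scheme'' and a ``stability analysis'' are named but never carried out), and the amortised inequality $\sum_v \deg(v)\ge 4(r-2)n-O_r(1)$ is stated as a target rather than derived. The upper-bound half merely re-describes the construction already in the paper. So this cannot be accepted as a proof of the statement.

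That said, your $r=4$ construction deserves to be singled out, because as far as I can check it is correct and actually \emph{refutes} the conjecture for $r=4$. Take a triangle $D=\{d_0,d_1,d_2\}$ with $c(d_0d_1)=c(d_0d_2)=1$ and $c(d_1d_2)=2$, join every vertex of an independent set $A$ of size $n-3$ to all of $D$, and give each of the $3(n-3)$ edges between $A$ and $D$ its own fresh colour. Every $K_4$ in this graph is $D\cup\{u\}$ for some $u\in A$ (no two vertices of $A$ are adjacent), and any such copy contains the two edges of colour $1$, so the graph is $\rainbow{K_4}$-free. For a non-edge $uv$ in $A$ and a colour $i$, the copy on $\{u,v,d_1,d_2\}$ is rainbow unless $i\in\{2,c(ud_1),c(ud_2),c(vd_1),c(vd_2)\}$, while the copies through $\{d_0,d_1\}$ and $\{d_0,d_2\}$ have forbidden colour sets whose intersection is $\{1,c(ud_0),c(vd_0)\}$, which is disjoint from the first forbidden set; hence every colour $i$ is handled and the graph is $\rainbow{K_4}$-saturated with $3n-6<4n+C_4$ edges for $n$ large. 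So the conjecture as stated fails at $r=4$ and would at best have to be restricted to $r=3$ and $r\ge 5$. Your accompanying observation --- that for $r\ge 5$ no colouring of $K_{r-1}$ can be non-rainbow yet contain, for every colour $i$, a rainbow $(r-2)$-subclique avoiding $i$ --- correctly explains why this particular one-core device does not generalise, but it rules out only one family of competitors and is not a lower bound, so the constant $2(r-2)$ remains unverified even for $r\ge 5$.
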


Finally, we conjecture that, like the ordinary saturation numbers, the rainbow saturation numbers of any graph are at most linear in $n$.
\begin{conjecture}
For any graph $H$, $\sat{}{n}{H} = O(n)$.
\end{conjecture}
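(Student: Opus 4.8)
The plan is to exploit the two features that distinguish the infinite-palette regime. First, a rainbow copy of $H$ is in particular a copy of $H$, so any $H$-free graph is automatically $\rainbow{H}$-free under \emph{every} colouring; second, since only finitely many colours are ever used, infinitely many colours remain ``fresh'' at each stage. It therefore suffices to produce an $H$-free graph $G$ on $n$ vertices with $O(n)$ edges, together with a colouring, so that completing any non-edge in any colour yields a rainbow $H$. I would colour $G$ with pairwise distinct colours on all edges. Under this colouring a copy of $H$ through a newly added edge $e$ is rainbow precisely when it has no colour clash with $e$, so for a fresh colour any completion copy of $H$ works, while for $i=c(f)$ one needs a completion copy of $H$ omitting the unique edge $f$ of colour $i$. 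Hence the entire saturation requirement collapses to the single combinatorial condition
\[
  (\star)\qquad \text{for every non-edge } e \text{ and every edge } f\in E(G),\ (G+e)-f \text{ contains a copy of } H \text{ through } e,
\]
which says exactly that the copies of $H$ created by completing any non-edge have no common edge. I will call such a graph \emph{robustly $H$-saturated}.

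The core of the argument is then to build, for every $H$, a robustly $H$-saturated graph with a linear number of edges. The prototype is the complete-graph construction of Corollary~\ref{cor:randomcomplete}: one takes an independent set $A$ together with \emph{two} disjoint $(r-2)$-cliques $B$ and $C$, each joined completely to $A$, so that a non-edge inside $A$ completes to a $K_r$ either through $B$ or through $C$; these two completion copies share only the new edge, which is precisely why $(\star)$ holds. I would generalise this ``doubling'' to an arbitrary $H$ as follows. Start from a K\'aszonyi--Tuza construction~\cite{kt} of an $H$-saturated graph with $O(n)$ edges; such a graph has a bounded ``core'' $D$ together with $n-O(1)$ further vertices, each attached to $D$ by a bounded number of edges, with the property that completing a non-edge among the attached vertices produces a copy of $H$ meeting $D$. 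Take two vertex-disjoint copies $D_1,D_2$ of the core and attach each of the remaining $n-O(1)$ vertices to \emph{both} cores in the prescribed way. Each completion then yields a copy of $H$ routed through $D_1$ and another routed through $D_2$; as these share no edge of $G$, condition $(\star)$ holds, and the edge count only doubles, remaining $O(n)$.

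It remains to verify that the doubled graph is still $H$-free, after which the distinct colouring and the reduction above finish the proof: by $(\star)$ every completion in every colour creates a rainbow $H$, while $H$-freeness guarantees $\rainbow{H}$-freeness. This $H$-freeness is the main obstacle I expect. Each core $D_i$ is itself $H$-free, and a copy of $H$ cannot live among the low-degree attached vertices alone; the danger is a copy of $H$ that straddles $D_1$ and $D_2$, using a few attached vertices as bridges between the two cores. This is clean for $K_r$, since $B\cup C$ induces no extra clique (there are no $B$--$C$ edges and the attached set is independent), but for general $H$ one must argue that $H$, being connected and lacking convenient cut structure through the sparse attachment vertices, cannot be split across the two cores. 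I anticipate this will require either a careful choice of how each attached vertex meets the two cores — keeping them ``far apart'' so that no short configuration bridges them — or a more refined redundancy mechanism than naive doubling. Guaranteeing robust two-route completions while forbidding every cross-core copy of $H$ is where the real work lies, and is the step I would expect to dominate the effort.
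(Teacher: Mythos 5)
The statement you are addressing is posed in the paper only as a conjecture in Section~\ref{sec:openproblems}; the paper contains no proof of it, so the only material to compare against is the partial evidence the authors give (Corollary~\ref{cor:randomcomplete} for cliques and the linear cases of Theorem~\ref{thm:class}). Your opening reduction is correct and is worth isolating: with an unbounded palette one may give all edges of a host graph $G$ pairwise distinct colours, and then $(G,c)$ is $\rainbow{H}$-saturated precisely when $G$ is $H$-free and satisfies your condition $(\star)$, that every non-edge $e$ and every edge $f$ admit a copy of $H$ through $e$ in $(G+e)-f$. This cleanly recovers the paper's $2(r-2)n$ bound for $K_r$ (the two disjoint $(r-2)$-cliques are exactly the two edge-disjoint completion routes), and it even gives a linear bound for stars by taking $G$ to be $(k-1)$-regular --- a good sanity check, since stars have quadratic $t$-rainbow saturation number for every fixed $t$.

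However, the proposal does not prove the conjecture: all of the difficulty is concentrated in the step you defer. First, the K\'aszonyi--Tuza construction does not in general have the clean shape you ascribe to it (a bounded core $D$ plus sparsely attached vertices, with every completion routed entirely through $D$); for many $H$ the relevant saturated graphs are joins of a clique with a sparse graph, and the copy of $H$ created by a new edge may be forced to use edges among the attached vertices or specific core edges in a way that a second, disjoint core does not duplicate, so $(\star)$ is not automatic even granting the doubling. Second, and as you yourself flag, there is no argument that the doubled graph remains $H$-free: a copy of $H$ straddling $D_1$ and $D_2$ through attached vertices is precisely the configuration the single-core construction was designed to exclude and that doubling can reintroduce, and excluding it for arbitrary connected $H$ is not a routine verification. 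Since you explicitly leave this as ``where the real work lies,'' what you have is a correct reformulation of the problem together with a heuristic, not a proof; the conjecture remains open as far as this argument goes.
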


\bibliography{rainbow}{}
\bibliographystyle{plain}

\end{document}